\numberwithin{figure}{section}
\newcounter{theorem}
\newtheorem{thm}[theorem]{Theorem}
\newtheorem{conj}[theorem]{Conjecture}
\newtheorem{prop}[theorem]{Proposition}
\newtheorem{lem}[theorem]{Lemma}
\theoremstyle{definition}
\newtheorem{defn}[theorem]{Definition}
\newtheorem{expl}[theorem]{Example}
\numberwithin{theorem}{section}
\numberwithin{equation}{section}
\numberwithin{table}{section}
\newcommand{\al}{\alpha}
\newcommand{\la}{\lambda}
\newcommand{\ka}{\kappa}
\newcommand{\ep}{\epsilon}
\newcommand{\ze}{\zeta}
\newcommand{\be}{\beta}
\newcommand{\si}{\sigma}
\newcommand{\om}{\omega}
\newcommand{\HC}{\mathcal{H}}
\newcommand{\Q}{\mathbb{Q}}
\newcommand{\Pn}{\mathcal{P}}
\newcommand{\fd}{\mathfrak{d}}
\newcommand{\clmn}{c^{\la}_{\mu,\nu}}
\newcommand{\glmn}{g^{\la}_{\mu,\nu}}
\newcommand{\dlmn}{d^{\la}_{\mu,\nu}}
\newcommand{\lmn}{\la,\mu,\nu}
\newcommand{\set}[1]{\left\{#1\right\}}
\newcommand{\ab}[1]{\left\langle#1\right\rangle}
\title[Jack and Macdonald Polynomials]{A product formula for certain Littlewood-Richardson coefficients for Jack and Macdonald polynomials}
\author{Yusra Naqvi}
\address{Department of Mathematics, Rutgers University, Piscataway, NJ 08854, USA}
\email{ynaqvi@math.rutgers.edu}
\date{}
\begin{document}

\begin{abstract}
Jack polynomials generalize several classical families of symmetric polynomials, including Schur polynomials, and are further generalized by Macdonald polynomials. In 1989, Richard Stanley conjectured that if the Littlewood-Richardson coefficient for a triple of Schur polynomials is 1, then the corresponding coefficient for Jack polynomials can be expressed as a product of weighted hooks of the Young diagrams associated to the partitions indexing the coefficient. We prove a special case of this conjecture in which the partitions indexing the Littlewood-Richardson coefficient have at most 3 parts. We also show that this result extends to Macdonald polynomials.
\end{abstract}

\maketitle
\section*{Introduction} 
\emph{Jack polynomials} $J_\la(\al;x)$ are a one parameter family of symmetric functions indexed by an integer partition $\lambda$. They were first introduced by Henry Jack \cite{J} in 1969 as generalizations of spherical functions over GL$(n,\mathbb{F})$/U$(n,\mathbb{F})$, where $\al=1/2,1,2$ correspond to the cases of $\mathbb{F}=\mathbb{H},\mathbb{C},\mathbb{R}$. Jack polynomials can be characterized in several ways. They appear as simultaneous eigenfunctions of certain Laplace-Beltrami type differential operators \cite{M}. In addition, they form an orthogonal basis for the ring of symmetric functions over the field of rational functions in $\al$. Jack polynomials were further generalized in 1988 by Macdonald polynomials $J_\la(q,t;x)$ \cite{M2}, which are a two parameter family of polynomials that reduce to Jack polynomials under a special limit. 

The $\al=1$ specialization gives us scalar multiples of the well-known Schur polynomials \cite{Ja,Sc}, which play a central role in the representation theory of the symmetric group $S_n$ as well as that of GL$(n,\mathbb{C})$. These polynomials are also indexed by partitions, and can be described combinatorially in terms of Young tableaux. Moreover, the coefficients that arise when a product of two Schur functions is decomposed into a sum of Schur functions have a combinatorial description known as the Littlewood-Richardson Rule (see \cite{M, HL}), given by counting the number of skew tableaux of a certain type. These Littlewood-Richardson coefficients also appear in various other fields outside of representation theory, such as in the study of Grassmanians and sums of Hermitian matrices (see \cite{HL, F2}). 

It is a continuing area of interest to find appropriate generalizations of these results for Schur polynomials in the context of Jack and Macdonald polynomials. Various works \cite{S,M,KS2,HHL,RY} establish several combinatorial properties of these polynomials and conjecture others. It is also possible to compute Littlewood-Richardson coefficients for such polynomials (see \cite{Sa2,Sa,Sch,Y}), but currently there are no formulas for these coefficients in the style of the Littlewood-Richardson rule. 

In this work, we prove a special case of one of Richard Stanley's conjectures \cite[Conj. 8.5]{S} which proposes a combinatorial description for certain Littlewood-Richardson coefficients for Jack polynomials in terms of a choice of upper and lower hooks (see Section \ref{ssec:stan}). In particular, this conjecture directly generalizes the Littlewood-Richardson rule for triples of partitions $(\lmn)$ such that the corresponding coefficient for Schur polynomials indexed by this triple is $1$. Moreover, this conjecture implies that the coefficient (under a certain explicit normalization) is a polynomial in $\al$ that can be written as a product of linear factors with positive integer coefficients. However, one of the main difficulties in proving this conjecture is that although it asserts that it is possible to write the coefficients as a product of some upper and lower hooks, it is not known how to make an appropriate choice of hooks. Also, while previous results, such as those in \cite{Y}, already present useful combinatorial descriptions for these coefficients, there are currently no formulas that prove Stanley's conjecture or even show the positivity of these coefficients.

Here we prove that this conjecture is true when the partitions in the triple $(\lmn)$ are restricted to having at most $3$ parts (Theorem \ref{thm:main}), and we extend this result to coefficients for Macdonald polynomials as well (Theorem \ref{thm:macmain}). We also show that the hooks can be chosen such that they preserve a convenient additional constraint which allows us to encode the coefficients much more simply in terms of a system of numbers we call \emph{division numbers} (defined in Section \ref{ssec:div}). In order to prove these assertions, we first divide the problem into several cases, and then present experimentally obtained formulas in terms of division numbers for the coefficients in each case of our classification. It turns out that in each of these cases, the verification of the formula uses one of two main lemmas (Lemmas \ref{lem:col} and \ref{lem:row}), giving us a unifying underlying structure.

In Section \ref{sec:prelim}, we provide some background about the combinatorics of partitions and symmetric functions. In Section \ref{sec:stan}, we give a precise statement of Stanley's conjecture for Littlewood-Richardson coefficients of Jack polynomials and state our main theorem. We classify all the partitions that satisfy the hypotheses of Stanley's conjecture in Section \ref{sec:class}. Then, in Section \ref{sec:proof}, we present the division number formulas, main lemmas, and a proof of the main theorem. In Section \ref{sec:macd}, we extend our result from coefficients for Jack polynomials to coefficients for Macdonald polynomials. Finally, we describe some ongoing work and further directions relating to our results in Section \ref{sec:future}.

\section{Preliminaries} \label{sec:prelim}
In this section, we present some basic definitions and background information pertaining to the theory of partitions and symmetric functions. We refer the reader to \cite{M,F2} for a more detailed treatment of this material.

\subsection{Partitions}

\begin{defn} A \emph{partition} $\la$ is a sequence $(\la_1,\la_2,\ldots,\la_n)$ of non-negative integers listed in weakly decreasing order: $$\la_1 \geq \la_2 \geq \cdots \geq \la_n \geq 0.$$
\end{defn}

Each nonzero $\la_i$ is called a \emph{part} of $\la$. We will sometimes write a partition $\la$ in the form $(i_1^{m_1},i_2^{m_2}, \ldots,i_k^{m_k})$, where $i_j^{m_j}$ denotes $m_j$ parts equal to $i_j$. We call $m_j$ the \emph{multiplicity} of $i_j$ in $\la$.

The \emph{length} $\ell(\la)$ of a partition $\la$ is the number of parts of $\la$. Let $\Pn_n$ denote the set of partitions of length at most $n$. We think of $\la \in \Pn_n$ as an $n$-tuple, with $\la_i=0$ for $i>\ell(\la)$.

The \emph{weight} $|\la|$ of $\la$ is the sum of its parts:
$$|\la|=\la_1+\la_2+ \cdots + \la_n.$$ 
If $|\la|=n$, then we say $\la$ is a \emph{partition of $n$}.

Given any two partitions $\la$ and $\mu$, we can define $\la+\mu$ as the partition obtained by taking the sum of $\la$ and $\mu$ as sequences:
$$(\la+\mu)_i=\la_i+\mu_i.$$

Given two partitions $\la,\mu$ of $n$, we say $\mu \leq \la$ if for all $i \in \set{1,\ldots,n}$, 
$$\mu_1 + \ldots +\mu_i \leq \la_1 + \ldots +\la_i.$$
The relation $\leq$ defines a partial order, known as the \emph{dominance order}, on the set of all partitions of $n$. 

Partitions are commonly represented diagramatically.
\begin{defn} The \emph{Young diagram} of a partion $\la$ is a left justified array of boxes such that there are $\la_i$ boxes in row $i$. 
(We will use the same symbol $\la$ to denote both the partition and its Young diagram.) 
\end{defn}

\begin{expl} Let $\la=(5,2,2,1).$ Then the corresponding Young diagram is:
$$\yng(5,2,2,1)$$
\label{ex:young}
\end{expl}

The \emph{conjugate} $\la'$ of a partition $\la$ is the partition whose diagram is the transpose of the diagram of $\la$, where the transpose is obtained by reflecting across the main diagonal and thus interchanging rows and columns.

\begin{expl} If $\la=(5,2,2,1)$ (as in Example \ref{ex:young}), then the transpose of its Young diagram is:
$$\yng(4,3,1,1,1)$$
and so $\la'=(4,3,1,1,1)$. 
\end{expl}

We say $\la \supset \mu$ if the diagram of $\la$ contains the diagram of $\mu$. Let $\la - \mu$ be the set theoretic difference between the two diagrams, which we call a \emph{skew diagram}.

\begin{expl} If $\la=(5,2,2,1)$ and $\mu=(3,2,1)$, then the skew diagram $\la/\mu$ is denoted by the marked boxes in the diagram below:
$$\young(~~~\bullet\bullet,~~,~\bullet,\bullet).$$ 
\label{e:skewd}
\end{expl}

If the skew diagram consists of $r=|\la|-|\mu|$ boxes and has at most one box in each column (respectively, row), we refer to it as a \emph{horizontal $r$-strip} (respectively \emph{vertical $r$-strip}). In Example \ref{e:skewd}, $\la/\mu$ is a horizontal $4$-strip. However, it is not a vertical strip since the first row of the skew diagram contains two boxes.

A \emph{skew tableau} $T$ is obtained by filling each box of a skew diagram $\la/\mu$ with a positive number, where $\la-\mu$ is called the \emph{shape} of $T$. If $m_i$ denotes the number of times $i$ appears in the skew tableau, we say $(m_1,\ldots, m_r)$ is the \emph{weight} of the $T$, and the \emph{word} $w(T)$ of $T$ is the sequence obtained by reading the entries of $T$ from right to left in each row.

\begin{expl} Let $T$ be the skew tableau given by
$$\young(~~~12,~1133,12,3)$$
Then:
\begin{itemize}
\item the shape of $T$ is $(5,5,2,1) - (3,1)$.
\item the weight of $T$ is $(4,2,3)$.
\item the word of $T$ is $w(T)=(2,1,3,3,1,1,2,1,3)$.
\end{itemize}
\end{expl}

A skew tableau $T$ is said to be \emph{semistandard} if the entries of $T$ weakly increase across rows (from left to right) and strongly increase down columns. We say that $T$ satisfies the \emph{Yamanouchi word condition} if the number of occurrences of an integer $i$ never exceeds the number of occurrences of $i-1$ for any initial segment of $w(T)$.

\begin{defn} A \emph{Littlewood-Richardson} tableau is a semistandard skew tableau $T$ that satisfies the Yamanouchi word condition.
\end{defn}

\begin{expl} The skew tableau
$$\young(~~~11,~112,23)$$
is a Littlewood-Richardson tableau.
\end{expl}

We will call any filling of a skew diagram that gives a Littlewood-Richardson tableau an \emph{LR filling}.

\subsection{Symmetric Functions}

Let $\mathbb{Z}[x_1,\ldots,x_n]$ denote the ring of polynomials in $n$ independent variables $x_1,\ldots,x_n$ with integer coefficients. Let $S_n$ be the symmetric group on $n$ letters. Then $S_n$ acts on $\mathbb{Z}[x_1,\ldots,x_n]$ by permuting the variables, and a polynomial is called \emph{symmetric} if it is unchanged under this action. The symmetric polynomials form a subring: $$\Lambda_n=\mathbb{Z}[x_1,\ldots,x_n]^{S_n}.$$

For each $\al=(\al_1,\ldots,\al_n) \in \mathbb{N}^{n}$ we can define the monomial $$x^\al = x^{\al_1}_1 \cdots x^{\al_n}_n.$$ Then we can define the \emph{monomial symmetric function} $m_\la$, where $\la$ is a partition of length at most $n$, by 
$$m_\la(x_1,\ldots,x_n) = \sum_{\al \in S_n\cdot\la} x^{\al},$$
where $S_n\cdot\la$ is the orbit of $\la$ under the action of $S_n$. The monomial symmetric functions form a $\mathbb{Z}$-basis for $\Lambda_n$.

For a partition $\la$, we can also define the skew-symmetric polynomial $a_\la$ by 
$$a_\la(x_1, \ldots, x_n) = \sum_{w \in S_n} \ep(w) x^{w(\la)},$$
where $\ep(w)$ is the sign of the permutation $w \in S_n$. Let $\delta$ be the partition $(n-1,n-2,\ldots,1,0)$. Then $a_{\la+\delta}$ is divisible by $a_\delta$, and the quotient 
$$s_\la(x_1,\ldots,x_n) = \frac{a_{\la+\delta}}{a_{\delta}},$$
called the \emph{Schur polynomial}, is a symmetric function. The $s_\la , \;\ell(\la) \leq n$ also form a basis for $\Lambda_n$.

Schur polynomials appear as spherical functions over GL$(n,\mathbb{C})/$U$(n,\mathbb{C})$. Spherical functions over GL$(n,\mathbb{F})/$U$(n,\mathbb{F})$ are further generalized by Jack polynomials $J_\la(\al;x_1,\ldots,x_n)$, where $\al=1/2,1,2$ correspond to the case of $\mathbb{F}=\mathbb{H},\mathbb{C},\mathbb{R}$, respectively.

To define Jack polynomials, we must first define the operator $D(\al)$ on $\Lambda \otimes \Q(\al)$ by
$$D(\al) = \frac{\al}{2} \sum_i x_i^2 \frac{\partial^2}{\partial x_i^2} + \sum_{i \neq j} \frac{x_i^2}{x_i-x_j} \frac{\partial}{\partial x_i}.$$
Then $D(\al)$ is upper triangular on the basis of monomial symmetric functions $m_\la$, ie
$$D(\al) m_\la = \sum_{\mu\leq \la} b_{\la,\mu} m_\mu.$$

\begin{defn} The \emph{monic} Jack polynomials $$P_\la = P_\la(\al;x_1, \ldots, x_n)=\sum_{\mu\leq \la} v_{\la,\mu} m_\mu$$ are the eigenfunctions of $D(\al)$ such that $v_{\la, \la} =1$. 
\end{defn}

Note that $P_\la(1)=s_\la$. We will also find it convenient to consider the following scalar multiples of $J_\la$:

\begin{defn} The \emph{integral} Jack polynomials $$J_\la=J_\la(\al;x_1, \ldots, x_n)=\sum_{\mu\leq \la} v_{\la,\mu} m_\mu$$ are the eigenfunctions of $D(\al)$ such that if $|\la|=m$, then $v_{\la, (1^m)}=m!$. 
\end{defn}

Jack polynomials are further generalized by Macdonald polynomials which are eigenfunctions of the operator $D(q,t)$ on $\Lambda \otimes \Q(q,t)$ defined by:
$$D(q,t) = \sum_i \left(\prod_{i \neq j} \frac{tx_i - x_j}{x_i - x_j} T_{q,i}\right),$$
where 
$$T_{q,i} f(x_1,\ldots,x_n) = f(x_1, \ldots, qx_i, \ldots, x_n).$$
Then, once again,
$$D(q,t) m_\la = \sum_{\mu\leq \la} b_{\la,\mu} m_\mu.$$

\begin{defn} The Macdonald polynomials $$P_\la = P_\la(q,t;x_1, \ldots, x_n)=\sum_{\mu\leq \la} v_{\la,\mu} m_\mu$$ are the eigenfunctions of $D(q,t)$ such that $v_{\la, \la} = 1$. 
\end{defn}

We can recover the Jack polynomials from the Macdonald polynomials by taking the limit as $q,t$ go to $1$, where the parameter $\al$ signifies the direction along which this limit is taken. Thus, $$\lim_{t\rightarrow 1} P_\la(t^\al,t)= P_\la(\al).$$

\subsection{The Littlewood-Richardson Rule}
Schur functions can be interpreted combinatorially, by the following theorem.

\begin{thm}
$$s_{\la} = \sum_{T} x^{\theta(T)},$$
where $T$ is a tableau of shape $\la$, and $\theta(T)$ is the weight of $T$.
\end{thm}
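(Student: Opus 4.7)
The plan is to combine a symmetry argument with the bialternant definition of $s_\la$. Let $K_\la(x_1,\ldots,x_n) := \sum_T x^{\theta(T)}$ denote the right-hand side, where the sum is intended to run over semistandard tableaux $T$ of shape $\la$ (a sum over arbitrary fillings would not be symmetric). I would first show that $K_\la$ is a symmetric polynomial by constructing the \emph{Bender-Knuth involutions}: for each $i$, an involution on SSYT of shape $\la$ that interchanges the multiplicities of $i$ and $i+1$ while leaving all other multiplicities unchanged. Since $S_n$ is generated by the simple transpositions $s_i$, invariance under each $s_i$ shows $K_\la \in \Lambda_n$.

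Next, to identify $K_\la$ with $s_\la = a_{\la+\delta}/a_\delta$, I would prove the equivalent identity $K_\la \cdot a_\delta = a_{\la+\delta}$ using the Lindström--Gessel--Viennot (LGV) lemma. Interpret the determinant $a_{\la+\delta} = \det(x_i^{\la_j + n - j})$ as the signed generating function of $n$-tuples of lattice paths on a grid, where the $j$th path starts at a source $A_j$ and ends at one of the sinks $B_1,\ldots,B_n$ (with the identity matching corresponding to $A_j \mapsto B_j$), and each east step at height $i$ is weighted by $x_i$. The LGV path-swapping involution, applied at the first pair of crossing paths, cancels every tuple with an intersection in sign-reversing pairs. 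Only the non-intersecting tuples survive, and these are forced to use the identity matching.

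I would then exhibit a weight-preserving bijection between non-intersecting path tuples and SSYT of shape $\la$: the heights of east steps in path $j$, suitably shifted, form column $j$ of the tableau; non-intersection translates into strict increase down columns, while weak increase across rows is built into the lattice-path structure. After extracting the common staircase contribution as the Vandermonde factor $a_\delta$, this gives $K_\la \cdot a_\delta = a_{\la+\delta}$, hence $K_\la = s_\la$. The main obstacle is the off-by-one bookkeeping in choosing source and sink coordinates so that the monomial weight of a non-intersecting tuple factors cleanly as $x^\delta \cdot x^{\theta(T)}$; once this calibration is done, both the LGV cancellation and the tableau correspondence are essentially routine, and one could alternatively replace the LGV step by a direct sign-reversing involution on pairs $(w,T) \in S_n \times \mathrm{SSYT}(\la)$ with the same effect.
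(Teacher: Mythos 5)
This theorem is stated in the paper purely as classical background (the combinatorial description of Schur polynomials) and is given no proof there, so your attempt can only be judged on its own merits. The first half of your plan (Bender--Knuth involutions to show that $K_\la=\sum_T x^{\theta(T)}$ is symmetric) is fine, but the central step --- proving $K_\la\cdot a_\delta=a_{\la+\delta}$ by applying the Lindstr\"om--Gessel--Viennot lemma to the determinant $\det\bigl(x_i^{\la_j+n-j}\bigr)$ --- does not work as described, and the problem is not the ``off-by-one bookkeeping'' you defer. In a path model where east steps may occur at any height and an east step at height $i$ carries weight $x_i$, the generating function of paths from a source $A_j$ to a sink $B_i$ is a complete homogeneous symmetric polynomial, so the determinant that LGV computes is the Jacobi--Trudi determinant $\det\bigl(h_{\la_i-i+j}\bigr)$, not the bialternant $\det\bigl(x_i^{\la_j+n-j}\bigr)$. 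If instead you force the $(i,j)$ entry to be the single monomial $x_i^{\la_j+n-j}$, then each source--sink pair admits essentially one admissible path (all east steps pinned at the sink's height); the tail-swapping involution then takes you outside the admissible path family, so the LGV cancellation fails, and in any case rigid paths leave no freedom from which semistandard tableaux could emerge. Relatedly, the promised factorization of a non-intersecting tuple's weight as $x^{\delta}\cdot x^{\theta(T)}$ cannot happen: in the genuine LGV/SSYT correspondence the weight of a non-intersecting family is exactly $x^{\theta(T)}$, and $a_\delta$ is an alternating polynomial, not a common monomial factor that can be ``extracted'' from a sum of positive path weights.

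Concretely, what your sketch actually yields is $K_\la=\det\bigl(h_{\la_i-i+j}\bigr)$, and it silently assumes the separate, nontrivial identity $\det\bigl(h_{\la_i-i+j}\bigr)=a_{\la+\delta}/a_\delta$, which is exactly the bridge between the two definitions that must be proved. Your closing remark that one could instead run a sign-reversing involution on pairs $(w,T)\in S_n\times\mathrm{SSYT}(\la)$ does point at a correct and standard route (use symmetry of $K_\la$ to write $a_\delta K_\la=\sum_T a_{\delta+\theta(T)}$, then cancel, Bender--Knuth style, all tableaux whose contribution is not $+a_{\la+\delta}$), but that involution is the entire content of the proof and is only named, not constructed. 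Other standard fixes would be to prove the bialternant formula by induction on the number of variables via the interlacing/branching rule, or to prove Jacobi--Trudi $=$ bialternant algebraically and then keep your LGV step; as written, however, the proposal has a genuine gap at its key identification.
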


\begin{expl}
$s_{(2,1)} \in \Lambda_3:$
$$\young(11,2)\quad\young(11,3)\quad\young(12,2)\quad\young(12,3)\quad\young(13,2)\quad\young(13,3)
\quad\young(22,3)\quad\young(23,3)$$ 
$$s_{(2,1)} =x_1^2x_2 + x_1^2x_3 + x_1x_2^2 + 2x_1x_2x_3 + x_1x_3^2  + x_2^2x_3 + x_2x_3^2$$ 
\end{expl}

This also leads to a way of combinatorially interpreting the coefficients that appear when a product of Schur polynomials is expanded as a sum of Schur polynomials. This was developed using two major results. We start with a theorem that tells us how to expand such a product when one of the polynomials in the product is indexed by a partition of length 1. 

\begin{thm}[Pieri Rule] 
$$s_\mu s_{(r)} = \sum_\la s_\la,$$
where $\la/\mu$ is a horizontal $r$-strip.
\label{thm:spieri}
\end{thm}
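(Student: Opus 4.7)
The plan is to prove Theorem \ref{thm:spieri} via a weight-preserving bijection obtained from Schensted row insertion.

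\medskip

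First, I observe that the only semistandard tableaux of shape $(r)$ are the weakly increasing sequences $(i_1 \leq \cdots \leq i_r)$ of length $r$, so the tableau formula for $s_{(r)}$ gives $s_{(r)} = \sum_{i_1 \leq \cdots \leq i_r} x_{i_1} \cdots x_{i_r}$, which is the complete homogeneous symmetric function $h_r$. Combined with $s_\mu = \sum_T x^{\theta(T)}$, the left-hand side of Pieri's rule becomes
$$s_\mu \, s_{(r)} \;=\; \sum_{(T,\,(i_1 \leq \cdots \leq i_r))} x^{\theta(T)}\, x_{i_1} \cdots x_{i_r},$$
where $T$ ranges over SSYT of shape $\mu$, while the right-hand side is $\sum_{\la} \sum_{T'} x^{\theta(T')}$ summed over partitions $\la$ such that $\la/\mu$ is a horizontal $r$-strip and over SSYT $T'$ of shape $\la$. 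It therefore suffices to exhibit a monomial-preserving bijection between these two data sets.

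\medskip

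I would construct the bijection via Schensted row insertion: starting from $T$, successively row-insert $i_1, i_2, \ldots, i_r$ to produce a tableau $T'$ of some shape $\la$. The multiset of entries of $T'$ is the union of the entries of $T$ with $\{i_1, \ldots, i_r\}$, so the monomial $x^{\theta(T')}$ equals $x^{\theta(T)} \cdot x_{i_1} \cdots x_{i_r}$ and the total weight is preserved. For the inverse, given $T'$ of shape $\la$ with $\la/\mu$ a horizontal $r$-strip, I would reverse-bump the boxes of $\la/\mu$ in order of decreasing column index; this recovers an SSYT of shape $\mu$ together with a sequence of outputs which, read in reverse, is the weakly increasing word $(i_1, \ldots, i_r)$.

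\medskip

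The main obstacle is verifying the structural fact that makes this bijection well-defined: when a weakly increasing sequence $i_1 \leq \cdots \leq i_r$ is successively row-inserted into an SSYT of shape $\mu$, the resulting shape $\la$ satisfies that $\la/\mu$ is a horizontal $r$-strip, and conversely every such $T'$ arises from a unique pair $(T, (i_1, \ldots, i_r))$. This rests on the key observation that if $v \leq v'$, then the new box added by inserting $v'$ lies strictly to the right of the new box added by inserting $v$, so the appended cells occupy pairwise distinct columns. Proving this observation requires a careful row-by-row analysis of the bumping algorithm, tracking how weak monotonicity of the input controls where each insertion terminates; once it is in hand, Pieri's rule follows immediately by matching the two generating functions above.
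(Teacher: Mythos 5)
The paper does not actually prove Theorem \ref{thm:spieri}: the Pieri rule is quoted there as classical background (with \cite{M,HL} as general references), so there is no internal argument to compare yours against. Your proposed proof is the standard bijective one and its outline is correct: identifying $s_{(r)}$ with the complete homogeneous function $h_r$, row-inserting the weakly increasing word $i_1\leq\cdots\leq i_r$ into an SSYT of shape $\mu$, and inverting by reverse-bumping the cells of $\la/\mu$ from the rightmost column leftward. The one piece of real content, which you correctly isolate but only sketch, is the row bumping lemma (if $v\leq v'$ then the cell created by inserting $v'$ after $v$ lies strictly to the right and weakly above), together with its converse, which is what guarantees both that the added cells form a horizontal strip and that the reverse-bumped outputs are weakly decreasing, so the two maps are mutually inverse; that lemma is standard but should be proved, not just asserted, for the argument to be complete. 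A complementary route, closer in spirit to the paper's algebraic definition $s_\la=a_{\la+\delta}/a_\delta$, is to expand $a_{\mu+\delta}\,h_r$ monomial by monomial and cancel the terms whose exponent vectors have repeated entries; that proof avoids tableau combinatorics entirely, whereas your bijection has the advantage of being weight-preserving box by box and of generalizing directly to the Littlewood--Richardson setting the paper builds on.
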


\begin{expl} $\mu=(3,1), r=2$
$$\young(~~~11,~)\quad\young(~~~1,~1)\quad\young(~~~1,~,1)\quad\young(~~~,~11)\quad\young(~~~,~1,1)$$
$$s_{(3,1)} s_{(2)} = s_{(5,1)} +s_{(4,2)}+s_{(4,1,1)}+s_{(3,3)}+s_{(3,2,1)}$$
\end{expl}

Note that we can also consider the transpose of each of the indexing partitions, to get a way of multiplying two Schur polynomials when one of them is indexed by a partition consisting of a single column. 

Finally, we can extend this result to products of two Schur polynomials indexed by general partitions. This is done using the Littlewood-Richardson rule.

\begin{thm}[Littlewood-Richardson Rule] 
$$s_\mu s_\nu = \sum_\la c^{\la}_{\mu,\nu} s_\la,$$
where $c^{\la}_{\mu,\nu}$ is the number of Littlewood-Richardson tableaux $T$ of shape $\la/\mu$ and weight $\nu$.
\end{thm}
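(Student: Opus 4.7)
The plan is to prove the rule via skew Schur functions and jeu de taquin. First I would define the skew Schur polynomial combinatorially by
$$s_{\la/\mu}(x) = \sum_T x^{\theta(T)},$$
summed over semistandard tableaux $T$ of shape $\la/\mu$. An adjunction argument using the Hall inner product on symmetric functions (with the $s_\la$ orthonormal) then shows that $\clmn$ equals the coefficient of $s_\nu$ when $s_{\la/\mu}$ is expanded in the Schur basis, so it suffices to compute this coefficient combinatorially.

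Next, I would develop the theory of jeu de taquin slides on skew tableaux: each slide carries a semistandard skew tableau to a semistandard skew tableau of slightly smaller skew shape while preserving the monomial weight and the Knuth class of the reading word. Iterating produces a rectification $\mathrm{rect}(T)$, which is a semistandard tableau of straight shape. The key fact, due to Sch\"utzenberger, is that $\mathrm{rect}(T)$ does not depend on the order in which slides are performed, so skew tableaux partition into classes indexed by their rectifications. Expanding the generating function class by class and invoking the identity $s_\nu = \sum_T x^{\theta(T)}$ summed over semistandard $\nu$-shaped $T$, we see that the coefficient of $s_\nu$ in $s_{\la/\mu}$ is the number of semistandard skew tableaux of shape $\la/\mu$ that rectify to the superstandard tableau $U_\nu$ (the unique tableau of shape $\nu$ with every box in row $i$ containing $i$).

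The final step is to identify this count with the number of Littlewood-Richardson tableaux of shape $\la/\mu$ and weight $\nu$. The pivotal lemma is that a semistandard skew tableau rectifies to $U_\nu$ if and only if its reading word satisfies the Yamanouchi condition with content $\nu$, i.e., if and only if it is an LR tableau. One direction is immediate since $U_\nu$ has Yamanouchi reading word and jeu de taquin preserves this property on the reading word; the reverse uses that within any Knuth class of straight shape there is a unique Yamanouchi tableau, forcing the rectification to coincide with $U_\nu$. The main obstacle is establishing the two classical ingredients just invoked, namely the independence of rectification from the order of slides (which requires a careful local analysis of how consecutive slides commute) and the uniqueness of Yamanouchi tableaux within a given Knuth class. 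Once these foundations are in place, the remainder of the proof reduces to tracking reading words through jeu de taquin and verifying that the Yamanouchi condition is preserved at each step.
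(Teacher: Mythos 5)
The paper does not prove this statement at all: the Littlewood--Richardson rule is quoted as classical background (with references to Macdonald and to Fulton), since the paper's actual contributions concern the Jack and Macdonald analogues of these coefficients. So there is no proof in the paper to compare yours against; what can be assessed is whether your outline is a sound route, and it is. It is the standard Sch\"utzenberger jeu de taquin argument: reduce to expanding $s_{\la/\mu}$ in the Schur basis via the Hall pairing, group skew semistandard tableaux by rectification, and identify the class rectifying to the superstandard tableau $U_\nu$ with the LR tableaux via the ballot-word characterization. Two remarks on where the real work sits. First, your adjunction step implicitly needs the compatibility of the \emph{combinatorial} definition of $s_{\la/\mu}$ with the Hall inner product, i.e.\ $\left\langle s_{\la/\mu}, s_\nu \right\rangle = \left\langle s_\la, s_\mu s_\nu \right\rangle$; this requires either the Jacobi--Trudi definition of skew Schur functions or a Cauchy/RSK-type identity, and should be stated as an ingredient rather than taken for granted. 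Second, as you correctly note, the confluence of rectification (independence of the slide order) and the fact that a word is Yamanouchi of content $\nu$ exactly when its rectification is $U_\nu$ (equivalently, ballot-ness is a Knuth-class invariant) carry essentially all of the weight; once those are established, the rest of your argument goes through as written. With those foundations supplied, your proposal is a complete and correct proof of the rule, entirely independent of anything in this paper.
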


\begin{expl} $\mu=(2,1), \nu=(2,1), \la=(3,2,1)$

$$\young(~~1,~1,2)~\young(~~1,~2,1)~\young(~~2,~1,1)$$ 

\vspace{-4.5pc} \hspace{22pc} \begin{tikzpicture} \draw[thick] (0,0) --(1,2); \end{tikzpicture}

$$c^{(3,2,1)}_{(2,1),(2,1)} = 2$$
\end{expl}

\section{Stanley's Conjecture} \label{sec:stan}
\subsection{Statement of Conjecture} \label{ssec:stan}
We wish to generalize the Littlewood-Richardson rule to obtain a description of the coefficients that appear when a product of Jack or Macdonald polynomials is expanded as a sum of the respective polynomials. While it is possible to compute these coefficients recursively (see \cite{Sa2,Sa}), there is currently no combinatorial result that clearly reduces to the Littlewood-Richardson rule as we take the appropriate limit of the Jack or Macdonald polynomials to recover the corresponding Schur polynomials. However, in \cite{S}, Stanley made some observations and conjectures that give us some steps towards this goal. While Stanley discusses only the case of Jack polynomials in his paper, all results can be generalized to Macdonald polynomials as well.

In order to state Stanley's Conjecture \cite[Conj. 8.5]{S}, we must first define the hook length for a box in a Young diagram and some of its analogues.
The \emph{hook-length} $h_\la(b)$ of a box $b$ in the partition $\la$ is obtained by counting all the boxes to the right of $b$ (called the \emph{arm}, denoted $a_\la(b)$) and all the boxes below $b$ (called the \emph{leg}, denoted $\ell(b)$) along with $b$ itself.

\begin{align*}
a_\la(i,j) &= \la_i - j \\
\ell_\la(i,j) &= \la_j' - i \\
h_\la(i,j) &= a_\la(i,j) + \ell_\la(i,j) + 1
\end{align*}

\begin{expl}$\la=(5,2,2,1), b=(1,2)$
$$\young(~\times---,~|,~|,~)$$
$$h_\la (b) = 3+2+1 = 6$$
\end{expl}

We can define 2 $\al$-generalizations of $h_\la(b)$:
\begin{itemize}
\item \emph{upper hook-length}: $h^*_\la(b) = \al(a(b) +1)+\ell(b)$
\item \emph{lower hook-length}: $h_*^\la(b) = \al(a(b))+\ell(b)+1$
\end{itemize}
In effect, the upper hook treats the corner box as part of the arm, whereas the lower treats it as part of the leg. 

We also define the following products of hook lengths:
\begin{align*}
H^\la_* &= \prod_{b \in \la} h^\la_*(b) \\
H_\la^* &= \prod_{b \in \la} h_\la^*(b) \\
j_\la &= H^\la_* \cdot H_\la^* 
\end{align*}
Then we can relate the integral and the monic Jack polynomials as follows:
$$J_\la(\al) = H^\la_* P_\la(\al).$$ 

We can also define the dual $J^*_\la(\al)$ of $J_\la(\al)$ under the canonical inner product by:
$$J^*_\la(\al) = j_\la^{-1} J_\la(\al).$$ 

Finally, we consider the following expansions:
$$P_\mu P_\nu = \sum_\la c^{\la}_{\mu\nu}(\al) P_\la.$$
\begin{align*}
J_\mu J_\nu &= \sum_\la g^{\la}_{\mu\nu}(\al) J^*_\la, \\
P_\mu P_\nu &= \sum_\la c^{\la}_{\mu\nu}(\al) P_\la.
\end{align*} 
Then $$g^{\la}_{\mu\nu}(\al) = H^*_\la H_*^\mu H_*^\nu \clmn(\al).$$

We are now ready to state Stanley's conjecture.

\begin{conj}[Stanley, 1989] Given partitions $\lmn$ such that $\clmn(1)=1$, then for all $\al$,
\begin{equation}
g^{\la}_{\mu,\nu}(\al) = \left(\prod_{b \in \la} \tilde{h}_{\la}(b)\right) \left(\prod_{b \in \mu} \tilde{h}_{\mu}(b)\right) \left(\prod_{b \in \nu} \tilde{h}_{\nu}(b)\right),
\label{eq:stan}
\end{equation}
where each $\tilde{h}_\xi(b)$ is either $h^{*}_\xi(b)$ or $h^{\xi}_*(b)$. Moreover, we can choose these hooks such that there is an equal number of upper and lower hooks.
\label{conj:stan}
\end{conj}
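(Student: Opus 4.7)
The plan is to approach Stanley's conjecture by combining an inductive reduction to Pieri-type cases with a duality-based argument that pins down the balance condition. First, I would exploit the involution $\al \leftrightarrow 1/\al$ on Jack polynomials: under this involution, $J_\la(\al)$ and $J_{\la'}(1/\al)$ differ by a simple scalar, and upper hooks at $(i,j) \in \la$ correspond to lower hooks at $(j,i) \in \la'$ after rescaling by a power of $\al$. Applied to $\glmn(\al)$, this yields a functional equation relating it to $g^{\la'}_{\mu',\nu'}(1/\al)$, and the condition $\clmn(1)=1$ is preserved under conjugation. Combining this with any candidate product factorization forces matching total degrees in $\al$ from the upper and lower hook contributions. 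So the balance condition would follow for free once the product structure is established, provided the hook choices can be made compatibly with this duality.

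The product structure itself is where the real work lies. My approach would be to classify the triples $(\lmn)$ satisfying $\clmn(1)=1$ using combinatorial criteria in the spirit of Stembridge's work on multiplicity-free Littlewood-Richardson products: such triples have a rigid tableau-theoretic structure in which the unique LR tableau of shape $\la/\mu$ and weight $\nu$ decomposes into identifiable building blocks (rectangles, fat hooks, disjoint strips). For each structural type, I would attempt to peel off a horizontal or vertical strip so that the coefficient reduces to one governed by Stanley's Pieri rule for Jack polynomials, which already provides an explicit product-of-hooks formula. The hope is that each peeling step contributes a Pieri factor that is itself a balanced product of upper and lower hooks, and that uniqueness of the LR tableau guarantees the remaining triple stays in the multiplicity-free class so that the induction closes. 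The division-number framework of this paper would be generalized to track, at each step, which boxes in $\la$, $\mu$, $\nu$ are being assigned an upper versus a lower hook, with consistency enforced by the constraint that the numbers of each type remain equal.

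The main obstacle, and the reason the conjecture has resisted proof for over three decades, is precisely identifying the correct peeling: there is no a priori recipe that selects, from among the many hooks in $\la$, $\mu$, $\nu$, exactly those that appear in the product. Even the three-part case handled in this paper requires a detailed case analysis, and the cases do not obviously extend to arbitrary length because new structural configurations, such as LR fillings with many interleaving strips spanning many rows, appear and seem to require genuinely new combinatorial invariants. A secondary obstacle is that the known recursive formulas for $\clmn(\al)$, such as those arising from Yip's alcove-walk model or Sahi's binomial recurrences, produce the coefficient as a signed sum over tableaux, so both positivity and the product form are lost at intermediate stages even though the final answer must factor cleanly. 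Overcoming this would likely require either a new representation-theoretic construction of the Jack LR coefficients whose multiplicity-free specialization manifestly factors, or a bijective combinatorial model that assigns upper/lower hook labels directly to the unique LR tableau.
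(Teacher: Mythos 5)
What you have written is a research program, not a proof, and the statement in question is in fact still an open conjecture: the paper itself establishes it only for minimal triples in $\Pn_3$ (Theorem \ref{thm:main}), so no argument at this level of generality could be checked against a proof in the paper. Within your sketch there are three concrete gaps. First, the appeal to Stembridge-style multiplicity-free classification conflates two different conditions: Stembridge classifies pairs $(\mu,\nu)$ for which \emph{every} coefficient in $s_\mu s_\nu$ is $0$ or $1$, whereas the hypothesis here is only that $\clmn(1)=1$ for the particular $\la$ at hand. The set of minimal triples is much larger than the set of triples occurring in multiplicity-free products (the paper's own classification for $n=3$ proceeds via facets of the Horn cone $\HC_3$, not via multiplicity-freeness), so your structural decomposition into rectangles and fat hooks does not cover the hypothesis. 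Second, the claim that the balance condition ``follows for free'' from the $\al\leftrightarrow 1/\al$ duality is unjustified: equation \ref{eq:transpose} relates $(\lmn)$ to the conjugate triple $(\la',\mu',\nu')$ and globally swaps upper and lower hooks, which constrains the conjugate triple's assignment in terms of the original one but does not force an equal split for $(\lmn)$ itself unless the triple is self-conjugate or further degree information is extracted and matched.

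Third, and most seriously, the inductive peeling step is not closed. When you peel a row or column $\nu''$ off $\nu$ and use associativity, Lemma \ref{lem:path} produces an identity between two \emph{sums} over intermediate partitions $\ka$ and $\eta$, not a single product of a Pieri factor with a smaller coefficient. Only in special positions (e.g.\ Propositions \ref{prop:nu1} and \ref{prop:nu3}) does the sum collapse to one term; generically one must verify that a hypothesized product formula satisfies a multi-term sum identity, and this is exactly where the paper's real work lives (the division-number formulas of Table \ref{tab:div} together with the rational-function identities of Lemmas \ref{lem:col} and \ref{lem:row}). Your proposal names this obstacle honestly but offers no mechanism for resolving it, and without such a mechanism the induction does not close even in the three-part case, let alone in general. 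As it stands the proposal identifies plausible ingredients but does not constitute a proof of the conjecture, nor of the special case proved in the paper.
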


Unfortunately, while this conjecture states that such a choice is always possible, there is no canonical way to make such a choice, and no conjecture for an assignment that might work in general. In fact, as Stanley himself notes in \cite{S}, there is often more than one assignment of upper and lower hooks that would satisfy this conjecture. In particular, he presents the following example, computed by Philip Hanlon.

\begin{expl} $\la=(2,2,2,1,1),\mu=(2,1,1),\nu=(2,1,1)$
$$\young(ll,ll,u?,l,?) \qquad  \young(u?,l,?) \qquad \young(u?,l,?)$$
Of the 6 boxes marked ``?", 5 must be taken to be upper hooks and 1 to be a lower hook, so there are 6 possible ways to obtain the correct coefficient.
\end{expl}

Since we can get $\clmn$ by dividing $\glmn$ by all the upper hooks in $\la$ and all the lower hooks in $\mu$ and $\nu$, we will call such hooks \emph{standard hooks} and boxes assigned to have standard hooks in Equation \ref{eq:stan} to be \emph{standard boxes}. On the other hand, we will call lower hooks in $\la$ and upper hooks in $\mu$ and $\nu$ \emph{flipped hooks} and boxes with such an assignment in Equation \ref{eq:stan} \emph{flipped boxes}. If $\glmn(\al)$ is given by a product of only standard hooks, then $\clmn(\al)=1$ for all $\al$. In general, $\clmn(\al)$ can be regarded as a product over flipped boxes of the ratio of the flipped hook to the standard hook. When $\al=1$, the upper and lower hooks have the same value, and so any such product reduces to 1, in agreement with the hypothesis $\clmn(1)=1$.   

We will call any triple $(\la,\mu,\nu)$ of partitions that satisfy the hypothesis $\clmn(1)=1$ a \emph{minimal triple}. Such triples correspond to the case of a unique Littlewood-Richardson tableau of shape $\la - \mu$ with weight $\nu$, but it remains difficult to generate all such triples in general. Minimal triples lie on the boundary of Horn cones, which are given by the eigenvalues of Hermitian matrices $A,B,C$ such that $A+B+C=0$. (However, note that not all boundary triples are minimal.) Minimal triples also play a prominent role in Fulton's conjecture, which states that a minimal triple remains minimal under a scaling of all three partitions by the same factor. (A proof of Fulton's conjecture is given by Knutson, Tao and Woodward  in \cite{KTW}.) 

\subsection{Main Theorem}

In this work, we prove the following special case of Stanley's conjecture.

\begin{thm} Stanley's conjecture is true for $\lmn \in \Pn_3$.
\label{thm:main}
\end{thm}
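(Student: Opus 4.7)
The plan is to prove Theorem \ref{thm:main} by an exhaustive case analysis that exploits the strong combinatorial restrictions that the hypothesis $\lambda,\mu,\nu \in \Pn_3$ places on a minimal triple. The first step is to classify all minimal triples in $\Pn_3$. Since $\ell(\nu) \le 3$, the unique Littlewood-Richardson tableau of shape $\lambda/\mu$ and weight $\nu$ has entries in $\{1,2,3\}$, and the combination of semistandardness, the Yamanouchi word condition, and uniqueness forces the skew shape $\lambda/\mu$ into a short list of recognizable patterns. I would carry out the classification by stratifying on the row-layout of the $2$'s and $3$'s in the unique tableau, using the semistandard and Yamanouchi conditions to pin down the remaining shape data, producing a manageable finite list of classes parameterized by a few integer statistics of $\lambda,\mu,\nu$.

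The second step is to propose, in each class of the classification, an explicit assignment of standard versus flipped boxes in $\lambda,\mu,\nu$ such that the asserted product formula (\ref{eq:stan}) holds and the count of upper hooks equals the count of lower hooks. Rather than recording these assignments pictorially box-by-box, I would use the division numbers introduced in Section \ref{ssec:div} to give a compact and uniform encoding of the candidate hook data in each case.

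The third step is verification. For each class, I would compute $g^\lambda_{\mu,\nu}(\alpha)$ using one of the existing recursions for Jack Littlewood-Richardson coefficients \cite{Sa2,Sa,Y}, and compare the result against the candidate product. The paper announces that after clearing common factors, each verification reduces to an application of one of two main lemmas (Lemma \ref{lem:col} on columns and Lemma \ref{lem:row} on rows), which evaluate the ratio of flipped to standard hook contributions over an aligned column or row of boxes. Thus the bulk of the verification reduces to putting each case into a form where one of these two lemmas applies.

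The principal obstacle, I expect, is not any individual verification but the uniformity of the argument. The classification must be provably exhaustive, and the candidate hook assignments from step two must be coordinated across cases so that step three genuinely channels into one of two structural lemmas rather than fragmenting into many ad hoc computations. Finding division-number assignments that simultaneously realize (\ref{eq:stan}), preserve the balance of upper and lower hooks, and reduce cleanly to the row- or column-lemma in every case is the conceptual heart of the proof and the real technical hurdle.
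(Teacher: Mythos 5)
Your first two steps (classifying minimal triples in $\Pn_3$ via the structure of the unique LR tableau, and encoding candidate hook assignments by division numbers) do track the paper's structure, but your third step contains a genuine gap. The classes in the classification are infinite families parameterized by the parts of $\la,\mu,\nu$, so ``compute $g^{\la}_{\mu,\nu}(\al)$ using one of the existing recursions \cite{Sa2,Sa,Y} and compare against the candidate product'' is not an argument: those recursions do not produce closed product forms, and no finite computation covers an infinite family. What actually drives the verification in the paper is an induction organized around \emph{minimal paths}: one splits $\nu=\ze+\ep$ with $\ep$ a single row or column, applies the associativity identity of Lemma \ref{lem:path} to the product $P_\mu P_\ze P_\ep$, evaluates every coefficient involving $\ep$ by the Jack Pieri rule (Theorem \ref{thm:jpieri}, together with the transposition identity \ref{eq:transpose} and Lemma \ref{lem:3to2}), evaluates the remaining coefficients by the inductive hypothesis or by previously established cases (Propositions \ref{prop:horn1} and \ref{prop:horn2} are the workhorses), and then shows that the hypothesized division-number products satisfy the same linear identity, which pins down the one unknown coefficient $\clmn$. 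It is this identity---an equality of two sums of products of candidate formulas---that, after factoring out $\dlmn$ and common terms via equations \ref{eq:mod1} and \ref{eq:mod2}, reduces to the symmetry statements $\Phi(x;\si,\tau)\equiv\Phi(x;\tau,\si)$ of Lemmas \ref{lem:col} and \ref{lem:row}. Your gloss of those lemmas as evaluating ``the ratio of flipped to standard hook contributions over an aligned column or row of boxes'' misreads them: ``column'' and ``row'' refer to whether the split-off piece $\ep$ is a single column or a single row, and the lemmas are rational-function identities certifying that the two expansions of the triple product agree.

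In short, the missing idea is the inductive mechanism itself: choosing the decomposition of $\nu$ so that every triple appearing in Lemma \ref{lem:path} is again minimal (hence computable by the Pieri rule, a previous case, or induction on a statistic such as $\nu_2-\nu_3$ or $\fd_{222}$), and then proving the resulting sum identity for the candidate products. Without this, step three of your plan has no engine: invoking Lemmas \ref{lem:col} and \ref{lem:row} by name does not connect the candidate hook assignments to the actual Littlewood--Richardson coefficients, which enter your verification only through an unspecified ``comparison.''
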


We will show this by first classifying all minimal triples of partitions in $\Pn_3$, which we do in Section \ref{sec:class}. We thus divide the problem into several cases and develop an experimental formula in the form of Equation \ref{eq:stan} for $\clmn$ in each case. A complete list of these is given in Section \ref{sec:div}. In Section \ref{sec:ver}, we verify that our experimental formulas indeed give the correct coefficient, thus completing the proof of Theorem \ref{thm:main}. In Section \ref{sec:macd}, we extend this theorem to get Theorem \ref{thm:macmain}, which shows that the coefficient for the corresponding Macdonald polynomials can also be obtained for minimal triples of partitions in $\Pn_3$ using the same system of upper and lower hook assignments using a suitable generalization of hook-lengths.

\subsection{The Pieri Rule for Jack Polynomials}

By Theorem \ref{thm:spieri}, we see that if $\nu$ consists of a single row (or column), $(\lmn)$ must be a minimal triple. In fact, we have an analogue of this theorem that gives a proof of Stanley's conjecture when $\nu$ falls into this special case.

\begin{thm}[Pieri Rule for columns {\cite[Thm 6.3]{KS1}}] If $\la/\mu$ is a vertical $r$-strip and $\nu=(1^r)$, then 
$$\clmn(\al) = \prod_{s \in X(\la/\mu)} \frac{h^{\la}_*(s)}{h_{\la}^*(s)} \frac{h^{*}_\mu(s)}{h_{*}^\mu(s)},$$
where $X(\la/\mu)$ denotes all the boxes $(i,j) \in \mu$ such that $\mu_i=\la_i$ and $\mu_j' < \la_j'$.
\label{thm:jpieri} 
\end{thm}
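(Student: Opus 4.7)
The plan is to deduce the column Pieri rule from the (known) row Pieri rule for Jack polynomials via the conjugation involution $\omega_\al$ on $\Lambda \otimes \Q(\al)$. Recall that $\omega_\al$ is the $\Q(\al)$-algebra automorphism determined by $\omega_\al(p_r) = (-1)^{r-1}\al\,p_r$ on power sums, and its key property for our purposes is that $\omega_\al$ sends $P_\la(\al)$ to an explicit scalar multiple of $P_{\la'}(1/\al)$. The scalar, call it $c_\la(\al)$, can be written as a ratio involving products of upper and lower hooks; the essential calculation is that under the simultaneous transpose $(i,j) \mapsto (j,i)$ and parameter inversion $\al \mapsto 1/\al$, arm and leg are swapped, so one has a clean identity relating $h^*_\la(b)$ with $h^{\la'}_*(b')$ and $h_*^\la(b)$ with $h_{\la'}^*(b')$ (where $b'$ is the transposed cell), differing by an explicit power of $\al$.

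First, I would cite or derive the row Pieri rule: if $\la/\mu$ is a horizontal $r$-strip, then $P_\mu P_{(r)} = \sum_{\la} \psi_{\la/\mu}(\al)\, P_\la$, where $\psi_{\la/\mu}(\al)$ is a product of ratios of upper-to-lower hooks over a specific cell set $Y(\la/\mu) \subset \mu$. This is proved by expanding $P_\mu P_{(r)}$ in the $P_\la$ basis, applying $D(\al)$, using that $P_{(r)}$ is explicitly known (a hypergeometric-type symmetric function), and matching eigenvalues on both sides; equivalently, it follows from Stanley's integral formula or from Macdonald's derivation via the Cauchy kernel.

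Next, I would apply $\omega_\al$ to both sides of the row Pieri identity. The left side $P_\mu(\al)\, P_{(r)}(\al)$ becomes $c_\mu(\al)\, c_{(r)}(\al)\, P_{\mu'}(1/\al)\, P_{(1^r)}(1/\al)$, and the right side becomes $\sum_{\la} \psi_{\la/\mu}(\al)\, c_\la(\al)\, P_{\la'}(1/\al)$, where the sum now ranges over $\la'$ with $\la'/\mu'$ a vertical $r$-strip. Relabeling $\la \leftrightarrow \la'$, $\mu \leftrightarrow \mu'$ and substituting $\al \to 1/\al$, I obtain the Pieri coefficient for $\nu = (1^r)$ as
\[
\clmn(\al) \;=\; \psi_{\la'/\mu'}(1/\al) \cdot \frac{c_{\la'}(1/\al)}{c_{\mu'}(1/\al)\, c_{(1^r)}(1/\al)}.
\]
The remaining task is to simplify this product, cell by cell, using the hook-length transposition identities, and to show that all factors outside the set $X(\la/\mu) = \{(i,j) \in \mu : \mu_i = \la_i,\ \mu_j' < \la_j'\}$ cancel, leaving exactly $\prod_{s \in X(\la/\mu)} \frac{h^\la_*(s)}{h_\la^*(s)} \cdot \frac{h^*_\mu(s)}{h_*^\mu(s)}$.

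The main obstacle will be this final bookkeeping: matching the cell set $Y(\la'/\mu')$ from the row rule, after transposition, with the set $X(\la/\mu)$, and verifying that the scalar ratio $c_{\la'}(1/\al)/(c_{\mu'}(1/\al)\, c_{(1^r)}(1/\al))$ contributes precisely the hook factors for cells of $\mu$ not in the row-strip image but lying in a shared row or column. Because each $c_\la(\al)$ is a global product over all of $\la$, one expects massive telescoping when forming the ratio for $\la/\mu$, and the vertical-strip hypothesis (at most one cell per row in $\la/\mu$) should force the surviving factors to localize along the columns where $\la/\mu$ has cells, giving the advertised product over $X(\la/\mu)$. Once this cancellation is carried out explicitly, the theorem follows.
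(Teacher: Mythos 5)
The paper does not actually prove Theorem \ref{thm:jpieri}: it is imported verbatim from Knop--Sahi \cite[Thm 6.3]{KS1}, so there is no internal argument to compare yours against. Your proposed route---deduce the column rule from the row rule by applying the involution $\omega_\al$, using $\omega_\al P_\la(\al)=b_{\la'}(1/\al)\,P_{\la'}(1/\al)$ with $b_\la=H^\la_*/H^*_\la$---is a legitimate and standard derivation, and it is precisely the mechanism the paper itself packages as Equation \ref{eq:transpose}; the paper simply runs it in the opposite direction, taking the cited column rule as primitive and deducing the row rule from it. Because of that, you must anchor the row Pieri rule in a source independent of the statement being proved (e.g.\ Stanley \cite[Thm 6.1]{S}, or Macdonald's $(q,t)$-Pieri formula specialized to the Jack limit) rather than in anything downstream of Equation \ref{eq:transpose}; note also that your sketch of proving the row rule by ``matching eigenvalues of $D(\al)$'' would not suffice on its own, since the eigenvalue data does not determine the Pieri coefficients, so citing it is the right move.

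The substantive weakness is that the step you defer as ``final bookkeeping'' is the entire content of the theorem: identifying the transposed cell set of the row rule with $X(\la/\mu)=\set{(i,j)\in\mu:\ \mu_i=\la_i,\ \mu_j'<\la_j'}$, and verifying that the global ratio $b_\la(\al)/\bigl(b_\mu(\al)\,b_{(r)}\text{-type scalars}\bigr)$ telescopes against $\psi_{\la'/\mu'}(1/\al)$ so that exactly the four-hook factor $\frac{h^\la_*(s)}{h^*_\la(s)}\frac{h^*_\mu(s)}{h_*^\mu(s)}$ survives at each cell of $X(\la/\mu)$. This cancellation is standard (it is how the vertical-strip formula is derived from the horizontal-strip formula in Macdonald's treatment), but as written your argument asserts it rather than establishes it, so the proof is incomplete until that cell-by-cell matching is carried out. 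One small index slip to fix along the way: with your definition of $c_\la$, the scalar produced by $\omega_\al$ acting on $P_{(r)}(\al)$ is $c_{(r)}(\al)$, so after the substitution $\al\mapsto 1/\al$ the denominator of your displayed formula should involve $c_{(r)}(1/\al)$, not $c_{(1^r)}(1/\al)$.
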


\begin{expl} $\la=(4,2,2), \mu=(3,2,1), \nu=(1,1)$ 

$$\young(uuuu,ul,uu) \qquad  \young(lll,lu,l) \qquad \young(l,l)$$

$$\clmn = \frac{2\al}{1+\al}$$
$$g^{\la}_{\mu,\nu} = 32\al^5(3+2\al)(1+2\al)^2(2+\al)^2(2+3\al)$$ 
\label{e:jpieri}
\end{expl}

We define $$b_\la(\al)=\frac{H^{\la}_*(\al)}{H^*_\la(\al)}.$$ Thus, we can think of $b_\la(\al)$ as an operator that switches upper and lower hooks. This gives us the following equation:

\begin{equation} \displaystyle c^{\la'}_{\mu',\nu'}\left(\frac{1}{\al}\right) = \frac{\clmn(\al) b_\mu(\al) b_\nu(\al)}{b_\la(\al)}.
\label{eq:transpose}
\end{equation}

Therefore, if $\lmn$ is a minimal triple and we transpose all 3 partitions, the resulting Littlewood-Richardson coefficient corresponds to swapping all the upper and lower hooks. This allows us to use the Pieri rule for columns as a rule for rows as well.

\begin{expl} We consider the triple obtained by transposing the partitions in Example \ref{e:jpieri}: \\ $\la=(3,3,1,1), \mu=(3,2,1), \nu=(2)$ 

$$\young(lll,lul,l,l) \qquad  \young(uuu,ul,u) \qquad \young(uu)$$

$$\clmn = \frac{16\al^2(1+2\al)}{3(1+\al)^4}$$
$$g^{\la}_{\mu,\nu} = 32\al^5(2+3\al)(1+2\al)^2(2+\al)^2(3+2\al)$$ 
\end{expl}

\section{Classification} \label{sec:class}
We present a classification of all minimal triples $(\lmn)$ consisting of partitions in $\Pn_3$. In particular, we show that such triples correspond to each face of co-dimension one of the $n=3$ Horn cone (see \cite{KTW}). It turns out that this correspondence is no longer true if we allow partitions of greater length, in which case minimal triples form a proper subset of the triples that lie on boundary faces of the associated Horn cone.

\subsection{Horn's Inequalities} \label{sec:horn}
Horn cones were defined by \cite{H} to answer the following problem: given two $n\times n$ Hermitian matrices $A$ and $B$ with eigenvalues $\mu=(\mu_1,\ldots,\mu_n)$ and $\nu=(\nu_1,\ldots,\nu_n)$ (arranged in weakly decreasing order), we wish to determine the possible eigenvalues $\la=(\la_1,\ldots,\la_n)$ of the sum $C=A+B$. Horn conjectured a list of inequalities involving $\la,\mu,\nu$ that, together with the condition $|\la| = |\mu| + |\nu|$, determine all possible combinations. These inequalities were verified by the works of Klyachko \cite{Kl} and of Knutson and Tao \cite{KT}, which also show that the Littlewood-Richardson coefficient $\clmn$ is nonzero if and only if $(\lmn)$ lie in the Horn cone $\mathcal{H}_n$. Later, Knutson, Tao and Woodward \cite{KTW} determined the minimal necessary list of such inequalities that determines this cone.

Using this list of inequalities for $\mathcal{H}_3$, we have that the Littlewood-Richardson coefficient $\clmn$ is nonzero if the partitions $\lmn \in \Pn_3$ are such that $|\la| = |\mu| + |\nu|$, and they satisfy all of the inequalities in Table \ref{tab:horn} below.

\begin{table}[h!]
\caption{Horn's Inequalities for $n=3$}
\begin{multicols}{3}
\begin{enumerate}
\item $\mu_3 \leq \mu_2$ 
\item $\mu_2 \leq \mu_1$
\item $\nu_3 \leq \nu_2 $
\item $\nu_2 \leq \nu_1$
\item $\la_3 \leq \la_2 $
\item $\la_2 \leq \la_1$
\item $\la_1 \leq \mu_1+\nu_1$
\item $\la_2 \leq \mu_1+\nu_2$
\item $\la_2 \leq \mu_2+\nu_1$
\item $\la_3 \leq \mu_1+\nu_3$
\item $\la_3 \leq \mu_2+\nu_2$
\item $\la_3 \leq \mu_3+\nu_1$
\item $\la_3 \geq \mu_3+\nu_3$
\item $\la_2 \geq \mu_3+\nu_2$
\item $\la_2 \geq \mu_2+\nu_3$
\item $\la_1 \geq \mu_3+\nu_1$
\item $\la_1 \geq \mu_2+\nu_2$
\item $\la_1 \geq \mu_1+\nu_3$
\end{enumerate}
\end{multicols}
\label{tab:horn}
\end{table}

It is known that minimal triples $(\lmn)$ all lie on a union of some faces of the Horn cone (see \cite{B,KTW}). We will refer to a face of codimension one as a \emph{facet}. Since each facet is obtained by changing one of the defining inequalities to an equality, for $\HC_3$, we will refer to each facet by the same number as the corresponding inequality as in Table \ref{tab:horn} above.

In general, not every facet of $\HC_n$ contains minimal triples. However, this does hold for $\HC_3$, and so one can check triples $(\lmn)$ on the interior of each face, and determine that every single facet does indeed give a minimal triple. In the next section, we present a direct combinatorial proof of this fact. 

\subsection{Littlewood-Richardson Tableaux}
We will show that each facet of $\HC_3$ contains minimal triples by classifying the possible Littlewood-Richardson tableaux of shape $\la/\mu$ of weight $\nu$ in the case that $\lmn \in \mathcal{P}_3$. The cases presented in this proof were also used to determine the experimentally obtained formulas for $\clmn(\al)$ presented in Section \ref{sec:div}. 

\begin{thm}
For partions $\lmn \in \Pn_3$, we have $\clmn(1) = 1$ if and only if $\lmn$ lie on a facet of the Horn cone $\mathcal{H}_3$.
\label{thm:horn}
\end{thm}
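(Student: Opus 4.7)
The plan is to give a direct combinatorial proof by counting Littlewood--Richardson (LR) tableaux $T$ of shape $\la/\mu$ and weight $\nu$. Since $\lmn \in \Pn_3$, every entry of $T$ lies in $\{1,2,3\}$, and the combination of semistandardness with the Yamanouchi word condition forces a very rigid structure. Row $1$ of $\la/\mu$ must be filled entirely with $1$s, and row $2$ cannot contain any $3$ (reading right-to-left, a $3$ in row $2$ would be recorded before any $2$, violating Yamanouchi). Consequently row $2$ of $\la/\mu$ consists of some $1$s (necessarily in columns $\leq \mu_1$) followed by some $2$s, while row $3$ consists of some $1$s (in columns $\leq \mu_2$), then some $2$s, then some $3$s.

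I would then parameterize all such LR fillings by the single integer $y$ equal to the number of $1$s in row $3$. The shape and weight constraints determine every other count as a linear function of $y$: writing $x$ for the number of $1$s in row $2$ and $z$ for the number of $2$s in row $3$, one gets $x = \nu_1-\la_1+\mu_1-y$ and $z = \la_3-\mu_3-\nu_3-y$. Translating non-negativity, the positional bounds, the column-strict condition (a $2$ in row $3$ at a column $>\mu_2$ forces the box above to be a $1$), and the three Yamanouchi transition inequalities yields a system of upper bounds $U_1,\ldots,U_6$ and lower bounds $L_1,L_3,L_4$ on $y$. The bound $U_6 = \la_2-\mu_3-\nu_2$ coming from column-strictness is only active when $\la_3>\mu_2+\nu_3$.

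The heart of the proof is a bookkeeping step: compute each difference $U_i - L_j$ (simplifying with $|\la|=|\mu|+|\nu|$) and match it bijectively with one of the eighteen entries in Table~\ref{tab:horn}. The partition inequalities (Horn $1$--$6$) arise from differences like $U_1-L_1 = \mu_1-\mu_2$, and the remaining twelve from the other pairs; for example $U_4-L_4 = \la_1-\mu_1-\nu_3$ is Horn $18$, and $U_6-L_3 = \la_2-\mu_3-\nu_2$ is Horn $14$. The count of LR tableaux then equals $\max\!\bigl(0,\,\min_i U_i - \max_j L_j + 1\bigr)$, which is positive precisely when $(\lmn) \in \HC_3$ (all pairwise $U_i \geq L_j$), and equals $1$ precisely when $\min_i U_i = \max_j L_j$. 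Since we are in the Horn cone, this equality is equivalent to some single pair $U_i = L_j$ being saturated, hence to at least one Horn inequality being an equality, hence to the triple lying on a facet.

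The main technical obstacle is the case split created by $U_6$. When $\la_3 \leq \mu_2+\nu_3$, the pairs involving $U_6$ (corresponding to Horn inequalities $5$, $10$, and $14$) are absent from the count, so equality in one of those three inequalities would not obviously witness $\min_i U_i = \max_j L_j$. The remedy is a short algebraic check: in this regime Horn~$5$ is the automatic partition inequality $\la_2 \geq \la_3$, Horn~$10$ follows from Horn~$2$ and the regime hypothesis, and Horn~$14$ follows from Horn~$7$ together with $|\la|=|\mu|+|\nu|$; conversely, being on facet $5$, $10$, or $14$ in this regime forces simultaneous equality in facet $15$, $2$, or $7$ respectively, each of which is witnessed by a saturated pair $U_i = L_j$ with $i \leq 5$. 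Once this consistency is verified, the theorem reduces to the tautology that in the Horn cone, $\min_i U_i = \max_j L_j$ if and only if some active pair $U_i = L_j$ is saturated.
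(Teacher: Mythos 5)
Your argument is correct, and it reaches the theorem by a genuinely different route than the paper. The paper classifies the possible LR fillings of $\la/\mu$ by word type (type B or C, restrictions I--IV, and the overlap/gap types $o_i,g_i$), argues uniqueness via swap arguments, and then matches the resulting $32$ cases to the facets of $\HC_3$ in Table \ref{tab:class}; you instead exploit the fact that a $3$-row LR filling of weight $\nu$ is determined by the single integer $y$, express the number of fillings as the length $\max\left(0,\min_i U_i-\max_j L_j+1\right)$ of an integer interval, and match the $18$ active pairs $(U_i,L_j)$ bijectively with the $18$ inequalities of Table \ref{tab:horn}. This buys a uniform treatment of both implications at once (and re-derives the $n=3$ nonvanishing criterion combinatorially, where the paper leans on the cited Klyachko/Knutson--Tao results for membership in the cone), at the cost of the bookkeeping of $18$ differences and the careful regime analysis around $U_6$, which you do carry out correctly: your reductions of facets $5$, $10$, $14$ to saturations of $15$, $2$, $7$ when $\la_3\leq\mu_2+\nu_3$ all check. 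One point you should make explicit: the column-strictness constraint behind $U_6$ applies only when row $3$ actually contains a $2$ beyond column $\mu_2$, i.e.\ it also needs $z\geq 1$, so a priori the valid set could be an interval plus the isolated point $y=\la_3-\mu_3-\nu_3$ (where $z=0$); this is harmless because in the regime $\la_3>\mu_2+\nu_3$ that value of $y$ already violates the positional bound $y\leq\mu_2-\mu_3$, so every admissible $y$ has $z\geq 1$ and $U_6$ is a genuine hard bound, keeping the valid set an interval. Note also that the paper's case classification does double duty, since the $32$ types are reused in Section \ref{sec:div} to organize the division-number formulas, whereas your count serves only Theorem \ref{thm:horn} itself.
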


\begin{proof} A skew diagram of shape $\la/\mu$ consists of at most three rows. Therefore, if $\nu$ has length 3, then any LR filling of $\la/\mu$ of weight $\nu$ must consist of at least $\nu_3$ occurrences of $i$ in row $i$. We therefore only need to consider the remaining boxes, and we can thus assume, without loss of generality, that $\nu$ has length at most 2. By symmetry, we can also assume the same for $\mu$. 

Now let $T$ be a Littlewood-Richardson tableau of shape $\la/\mu$ with weight $\nu$. Then $w(T)$ must be a sequence of 1's and 2's of the form $(1^{a_1},2^{b_2},1^{b_1},2^{c_2},1^{c_1}),$ where $i^m$ denotes $m$ consecutive occurrences of $i$. In order to satisfy the Yamanouchi word condition, we must require that $a_1 \geq b_2$ and $a_1+b_1\geq b_2 + c_2$. For instance, if $T_1$ is the following diagram:
$$\young(~~~~~~111,~~~1122,1222),$$
then $w(T_1)=(1^3,2^2,1^2,2^3,1^1)$. Note, however, that in this case, a filling of this skew diagram of weight $(6,5)$ is not unique. We must therefore determine which restrictions on the set $({a_1},{b_2},{b_1},{c_2},{c_1})$ of multiplicities in $w(T)$ lead to a minimal triple $(\lmn)$.

First, suppose every column in $\la/\mu$ consists of a single box, so that $\la/\mu$ is a horizontal $|\nu|$-strip:
$$\young(~~~~1,~~12,12).$$
In order to have a unique LR filling, either $b_2=0$ (type B) or $c_1=0$ (type C). To see this, consider the case of an LR filling in which both $b_2$ and $c_1$ are nonzero, as in the diagram above. Then the last 1 in the third row can be swapped with the first 2 in the second row to get another LR filling,
$$\young(~~~~1,~~11,22)$$
so $\clmn(1)$ must be greater than 1 in this case. However, as this second diagram illustrates, requiring that the filling be of type B or C is not sufficient to give a minimal triple, even though it is a necessary condition. Specifically, in the absence of any additional restrictions, it may be possible to swap a 2 in the third row with a 1 in the second row.

Therefore, for each type, B or C, we require one of the following restrictions:
\begin{enumerate}
\item[I.] $c_2=0$ 
\item[II.] $b_1=0$ 
\item[III.] $a_1=b_2$ 
\item[IV.] $a_1+b_1=b_2+c_2$. 
\end{enumerate}
Conditions I and II remove one of the quantities that would have been involved in such a swap to get a new LR filling with the same weight. Conditions III and IV imply that any such swap would violate the Yamanouchi word condition, since the swap would have the effect of increasing $b_2$ while leaving $a_1,b_1$ and $c_2$ unchanged.

Finally, we consider the case in which $\la/\mu$ is no longer necessarily a horizontal strip. Then every column in the skew diagram could have up to two boxes, and whenever it does contain two boxes, the filling must be a 1 in the upper box and a 2 in the lower box. We could have an overlap between the first and second rows (denoted by type $o_1$) or an overlap between the second and third rows (denoted type $o_2$). In the case that $o_i$ does not occur, we denote the number of columns in the gap between the rows of the skew diagram by $g_i$. Thus, we have 32 cases in all (type B or C, type I-IV, type $o_1$ or $g_1$, and type $o_2$ or $g_2$). 

\begin{table}[h!]
\caption{Minimal triples of partitions in $\Pn_3$}
\begin{tabular}{|l||c|c|c|c|}
\hline
Type & $g_1g_2$ & $g_1o_2$ & $o_1g_2$ & $o_1o_2$ \\
\hline
\hline
B.I & $(3)$ & $(11)$ & $(8)$ & $(16)$ \\
\hline
B.II & $(15)$ & $(5)$ & $(2)$ & $(10)$ \\
\hline
B.III & $(18)$ & $(18)$ & $(6)$ & $(6)$ \\
\hline
B.IV & $(12)$ & $(12)$ & $(17)$ & $(17)$ \\
\hline
C.I & $(13)$ & $(1)$ & $(13)$ & $(1)$ \\
\hline
C.II & $(7)$ & $(14)$ & $(7)$ & $(14)$ \\
\hline
C.III & $(9)$ & $(9)$ & $(9)$ & $(9)$ \\
\hline
C.IV & $(4)$ & $(4)$ & $(4)$ & $(4)$ \\
\hline
\end{tabular}
\label{tab:class}
\end{table}

We will use $o_i$ and $g_i$ not only as a label for each type, but also a count (analogous to $a_1, b_i, c_i$) of the number of overlapping columns in the skew diagram, or the number of columns in the gap between rows of the skew diagram. Therefore, in general, the parts of $\lmn$ are given by:
\begin{align*}
\nu_1 &= a_1+b_1+c_1+o_1+o_2+\nu_3 \\
\nu_2 &= b_2+c_2+o_1+o_2+\nu_3 \\
\mu_1 &= b_1+b_2+g_1+o_2+c_1+c_2+g_2+\mu_3 \\
\mu_2 &= c_1+c_2+g_2+\mu_3 \\
\la_1 &= b_1+b_2+g_1+o_2+c_1+c_2+g_2+a_1+o_1+\mu_3+\nu_3 \\ 
\la_2 &= b_1+b_2+o_2+c_1+c_2+g_2+o_1+\mu_3+\nu_3 \\ 
\la_3 &= o_2+c_1+c_2+\mu_3+\nu_3 
\end{align*}
Therefore, each of the 32 cases corresponds to a restriction on the partitions $\lmn$. For instance, B.I.$g_1g_2$ means that $b_2=c_2=o_1=o_2=0$, and therefore $\nu_2=\nu_3$. Similarly, B.II.$g_1o_2$ means that $b_2=c_2=o_1=g_2=0$, and so we get that $\mu_2+\nu_2=\la_3$. We give a complete list of restrictions in Table \ref{tab:class}, where each number refers to the facet of $\HC_3$ determined by the correspondingly numbered Horn inequality above. We thus verify that each facet of $\HC_3$ appears in this table, and therefore each must contain only minimal triples.

\end{proof}

\section{Proof of Main Theorem} \label{sec:proof}
\subsection{Division Numbers} \label{ssec:div} Every partition $\la$ can be divided into rectangular \emph{blocks} consisting of all columns of the same height. We will use $\om^\la_i$ to denote the block $\left((\la_i-\la_{i+1})^i\right)$. Then if $\ell(\la)=n$, we can decompose $\la$ as the sum $\om^\la_1 + \om^\la_2 + \cdots +\om^\la_n$ of all its blocks.
\begin{expl} Let $\la=(6,4,2)$. The block $\om^\la_3$ is highlighted in the Young diagram below.
$$\young(\bullet\bullet~~~~,\bullet\bullet~~,\bullet\bullet)$$
\end{expl}

We refer to each part of a block $\om^\la_i$ as a \emph{strip}. Thus, each strip consists of a row within a block.
\begin{expl} Let $\la=(6,4,2)$. The strips $(\om^\la_3)_2$ and $(\om^\la_2)_1$ are highlighted in the Young diagram below.
$$\young(~~\bullet\bullet~~,\bullet\bullet~~,~~)$$
\end{expl}
 
It turns out that for a minimal triple $(\lmn)$ of partitions in $\Pn_3$, it is possible to obtain $\clmn$ by an assignment of upper and lower hooks in the corresponding diagrams such that within each strip, all the upper hooks that occur appear to the left of all the lower hooks that occur. (Note that a strip may contain only only upper hooks or only lower hooks.) We can thus encode the coefficient $\clmn$ by a system of \emph{division numbers}, which are numbers for each strip in $\lmn$ indicating the transition point between upper and lower hooks. By convention, we use the division numbers to count the flipped hooks in each strip, ie the lower hooks in each strip of $\la$ and the upper hooks in each strip of $\mu$ and $\nu$.

For each partition, we write the division numbers in a matrix style array, arranged in the same order (left to right, top to bottom) as the strip to which they correspond. Note that the division number symbols differ from matrices in that they contain no entries below the off-diagonal. Moreover, as we prove in Lemma \ref{lem:3to2} below, all hooks in the blocks $\om^\mu_3$ and $\om^\nu_3$ can be taken to be lower hooks, corresponding to division numbers of $0$ for all the strips in those blocks. Therefore, we will write the division numbers for $\la$ within a $3\times 3$ array and those for $\mu$ and $\nu$ within a $2\times 2$ array.

%
%

\begin{expl} $\la=(8,7,4),\mu=(6,3),\nu=(5,5)$ 

$$\young(ulllullu,uullull,ulll) \qquad  \young(uuluuu,uul) \qquad \young(ullll,uuuul)$$

$\clmn$ is encoded by the division numbers given as follows.
$$\la: \begin{bmatrix}3 & 2 & 0 \\ 2 & 2 \\ 3\end{bmatrix}
\quad \mu: \begin{bmatrix}2 & 3\\2 \end{bmatrix}
\quad \nu: \begin{bmatrix}1 & 0\\4 \end{bmatrix}.$$
\end{expl}

\subsection{Algebraic Structures} To compute $\clmn$ from the division numbers, we require the following notation. 

Let $\be$ be a multiset. We regard $\be$ as the set of vanishing points (counted with multiplicity) of a polynomial. Therefore, let $\phi(x;\be)$ be the smallest degree polynomial in $x$ such that $\phi(b;\be)=0$ for all nonzero $b \in \be$ and $\phi(0;\be)=1$. In particular, we have:
$$\phi(x;\be) := \prod_{b \in \be, b \neq 0}\left(\frac{b-x}{b}\right).$$

Such polynomials give us a natural way to write the coefficients $\clmn(\al)$ for minimal triples.
Given an upper hook $h^*_\la$, we can write the ratio of the corresponding lower hook to the upper hook as
$$\frac{h^*_\la-(\al-1)}{h^*_\la}.$$ Also, given a lower hook $h^\mu_*$, we can write the ratio of the corresponding upper hook to the lower hook as $$\frac{-h^\mu_*-(\al-1)}{-h^\mu_*}.$$ Thus, each $\clmn(\al)$ can we written as $\phi(\al-1;\mathcal{F}(\lmn))$, where $\mathcal{F}(\lmn)$ is the set of standards hooks of flipped boxes in $\la$ and negatives of standard hooks of flipped boxes in $\mu$ and $\nu$.

We will also find it convenient to write our hooks in terms of $r=1/\al$. In this case, we can regard our hook-lengths as
\begin{align*}
h^*_\la(b) &= a(b)+1 +\ell(b)r \\
h_*^\la(b) &= a(b) + (\ell(b)+1)r 
\end{align*}
and we have that
$$\phi(\al-1;m\al+n) = \frac{(m-1)\al+n+1}{m\al+n} = \frac{(m-1)+(n+1)r}{m+nr} = \phi(1-r;m+nr).$$

Succesive flipped $r$-hooks within a single strip differ by $1$, and so we require an effective way to describe such products. To do this, we will first define the following notation: 
$$\ab{x;a}_j = \phi(x;\set{a,\ldots,a+j-1}).$$
When $x$ is fixed and clear from context, we will suppress it and simply write $\ab{a}_j$. 

We will make use of two main identities involving such terms. 

For the first identity, observe that if $j=j_1+j_2$, then
\begin{align*}
\ab{a}_{j_1} \ab{a+j_1}_{j_2} = \ab{a}_j = \ab{a}_{j_2} \ab{a+j_2}_{j_1}
\end{align*}
and so 
\begin{equation}
\frac{\ab{a}_{j_1}}{\ab{a+j_2}_{j_1}} = \frac{\ab{a}_{j_2}}{\ab{a+j_1}_{j_2}}.
\label{eq:phid1} 
\end{equation}

For the second identity, note that if $a+b=x$ then
$$\left(\frac{a-x}{a}\right)\left(\frac{b-x}{b}\right) = \left(\frac{a-x}{a}\right)\left(\frac{-a}{x-a}\right) =1 $$
and more generally that
\begin{equation}
\ab{a}_{j} \ab{b-j+1}_j = 1,
\label{eq:phid2}
\end{equation}
where the $i^{\mbox{th}}$ term in the first product cancels with the $(j-i+1)^{\mbox{th}}$ term in the second product, since $a+(i-1)+(b-j+1)+(j-i)=a+b=x$. Note that this is equivalent to saying that 
$\ab{a}_{j} \ab{b}_j = 1$ whenever $a+b=x-j+1$.

Now note that such terms can be used to describe the product of flipped hooks within a single strip. We will use the notation:
$$[b;n] =\ab{1-r;b+1}_n = \phi\left(1-r;\set{b+1,b+2,\ldots, b+n}\right).$$
Let $h^{ij}_\la$ denote $h^*_\la(i,1)-h^*_\la(j,1)$. Then given partitions $\lmn$ and a set of division numbers $\mathfrak{n}$ for each strip in these partitions, we define $\mathbf{d}^\la_{\mu,\nu}(\mathfrak{n})$ to be the product: $$\mathbf{d}^\la_{\mu,\nu}(\mathfrak{n}) = \prod_{i\leq j} [h_\la^{ij};n^\la_{ij}]\cdot [-h_\mu^{ij};n^\mu_{ij}]\cdot [-h_\nu^{ij};n^\nu_{ij}],$$
where $n^\la_{ij}$ is the division number corresponding to the $i^{\mbox{\tiny th}}$ strip in $\om^{\la}_j$, and $n^\xi_{ij}$ is the division number corresponding to the $i^{\mbox{\tiny th}}$ strip in $\om^\xi_{j-1}$ for $\xi \in \set{\mu,\nu}$. We will refer to the starting point $b=\pm{h_\xi^{ij}}$ in each term of the form $[b;n]$ as the \emph{anchor} for the corresponding strip. 

Using equation \ref{eq:phid1}, we can determine how changes to the anchors or division numbers affect $\mathbf{d}^\la_{\mu,\nu}(\mathfrak{n})$. In particular, we have that 
\begin{align}
\frac{[h^{ij}_\xi;n^\xi_{ij} - t]}{[h^{ij}_\xi;n^\xi_{ij}]} &= \frac{1}{[h^{ij}_\xi+n^\xi_{ij} - t;t]}, \label{eq:mod1} \\
\frac{[h^{ij}_\xi+t;n^\xi_{ij}]}{[h^{ij}_\xi;n^\xi_{ij}]} &= \frac{[h^{ij}_\xi+n^\xi_{ij};t]}{[h^{ij}_\xi;t]}.
\label{eq:mod2}
\end{align}

\subsection{Division Numbers for Minimal Triples in $\Pn_3$} \label{sec:div} 
Let $\fd_{ijk}$ encode the quantity $|\la_i + \mu_j - \nu_k|$, and let $\xi_{ij}$ denote $\xi_i-\xi_j$ for any partition $\xi$. Let $\mathfrak{p}$ be the positive part of $\la_3-\mu_2-\nu_3$, so that $\mathfrak{p} = o_2 = \max(\la_3-\mu_2-\nu_3,0)$. Finally, let $x^{\pm}=x\pm \mathfrak{p},$ where $x$ is either some $\fd_{ijk}$ or some $\xi_{ij}$.

We present a complete list of division number formulas below for minimal triples in $\Pn_3$. These formulas are grouped according to facets of the Horn cone defined by the inequalities in Table \ref{tab:horn}. For each case, we present the division numbers for $\lmn$, and list the proposition in which this formula is verified. These propositions all appear in Section \ref{sec:ver}.

\begin{longtable}[h!]{|l|l|ccc|l|} 
\caption{Division numbers for minimal triples in $\Pn_3$ \label{tab:div}} \\
\hline
&Case: & $\la$: & $\mu$: & $\nu$: & Prop. \\
\hline
1.&$\mu_3=\mu_2$ & $\begin{bmatrix}
\fd_{333} & \fd_{222} & 0 \\
\fd_{333} & \fd_{222} & \\
\fd_{333} &&
\end{bmatrix}$ & $\begin{bmatrix}
0 & \mathfrak{d}_{111}\\
0 & 
\end{bmatrix}$ & $\begin{bmatrix}
\fd_{333} & \fd_{222} \\
\fd_{333} & 
\end{bmatrix}$ & \ref{prop:horn1} \\
&&&&&\\
2.& $\mu_2=\mu_1$ & $\begin{bmatrix}
\fd_{111} & \fd_{213} & 0 \\
\fd_{222} & \fd_{213} & \\
\fd_{111} &&
\end{bmatrix}$ & $\begin{bmatrix}
0 & 0\\
\fd_{333} & 
\end{bmatrix}$ & $\begin{bmatrix}
\fd_{213} & \fd_{111} \\
\fd_{213} & 
\end{bmatrix}$ & \ref{prop:horn2} \\
&&&&&\\
3.& $\nu_3=\nu_2$ & $\begin{bmatrix}
\fd_{333} & \fd_{222} & 0 \\
\fd_{333} & \fd_{222} & \\
\fd_{333} &&
\end{bmatrix}$ & $\begin{bmatrix}
\fd_{333} & \fd_{222} \\
\fd_{333} & 
\end{bmatrix}$ & $\begin{bmatrix}
0 & \mathfrak{d}_{111}\\
0 & 
\end{bmatrix}$ & \ref{prop:horn1} \\
&&&&&\\
4.& $\nu_2=\nu_1$ & $\begin{bmatrix}
\fd_{111} & \fd_{231} & 0 \\
\fd_{222} & \fd_{231} & \\
\fd_{111} &&
\end{bmatrix}$ & $\begin{bmatrix}
\fd_{231} & \fd_{111} \\
\fd_{231} & 
\end{bmatrix}$ & $\begin{bmatrix}
0 & 0\\
\fd_{333} & 
\end{bmatrix}$ & \ref{prop:horn2} \\
&&&&&\\
5.& $\la_3=\la_2$ & $\begin{bmatrix}
\fd_{111} & 0 & 0 \\
\fd_{333} & 0 & \\
\fd_{223} &&
\end{bmatrix} $ & $\begin{bmatrix}
\fd_{232} & \fd_{223}\\
\fd_{222} & 
\end{bmatrix}$ & $\begin{bmatrix}
\fd_{223} & \fd_{232} \\
\fd_{223} & 
\end{bmatrix}$ &
\ref{prop:la23} \\
&&&&&\\
6.& $\la_2=\la_1$ & $\begin{bmatrix}
\fd_{333} & \fd^-_{322} & 0 \\
\mathfrak{p} & \fd_{231} & \\
\fd_{221} &&
\end{bmatrix}$ & $\begin{bmatrix}
\fd^-_{212} & \mu_{12}\\
\fd_{221} & 
\end{bmatrix}$ & $\begin{bmatrix}
\fd^+_{231} & \fd_{221} \\
\fd_{231} & 
\end{bmatrix}$ & 
\ref{prop:la21} \\
&&&&&\\
7.& $\la_1 = \mu_1+\nu_1$
& $\begin{bmatrix}
0 & 0 & 0 \\
\fd_{333} & 0 & \\
\fd_{333} &&
\end{bmatrix}$ & $\begin{bmatrix}
0 & 0\\
\fd_{333} & 
\end{bmatrix}$ & $\begin{bmatrix}
0 & 0 \\
\fd_{333} & 
\end{bmatrix}$ & 
\ref{prop:nu1} \\
&&&&&\\
8.& $\la_2 = \mu_1+\nu_2$ 
& $\begin{bmatrix}
\fd_{333} & 0 & 0 \\
0 & 0 & \\
\fd_{333} &&
\end{bmatrix}$ & $\begin{bmatrix}
0 & 0 \\
\fd_{333} & 
\end{bmatrix}$ &
$\begin{bmatrix}
\fd_{333} & \fd_{111}\\
0 & 
\end{bmatrix}$ &  
\ref{prop:nu1} \\
&&&&&\\
9.& $\la_2 = \mu_2+\nu_1$ 
& $\begin{bmatrix}
\fd_{333} & 0 & 0 \\
0 & 0 & \\
\fd_{333} &&
\end{bmatrix}$ & $\begin{bmatrix}
\fd_{333} & \fd_{111}\\
0 & 
\end{bmatrix}$ & $\begin{bmatrix}
0 & 0 \\
\fd_{333} & 
\end{bmatrix}$ & 
\ref{prop:nu1} \\
&&&&&\\
10.& $\la_3 = \mu_1+\nu_3$ 
& $\begin{bmatrix}
\fd^+_{111} & \fd_{223} & 0 \\
\fd^+_{221} & \fd_{223} & \\
\mathfrak{p} &&
\end{bmatrix}$ & $\begin{bmatrix}
\mathfrak{p} & 0 \\
\fd^+_{223} & 
\end{bmatrix}$ & $\begin{bmatrix}
\fd_{111} & \fd^+_{223}\\
\fd_{221} & 
\end{bmatrix}$ &  
\ref{prop:nu1} \\
&&&&&\\
11.& $\la_3 = \mu_2+\nu_2$ 
& $\begin{bmatrix}
\fd_{333} & \fd_{222} & 0 \\
\fd_{333} & \fd_{222} & \\
0 &&
\end{bmatrix}$ & $\begin{bmatrix}
\fd_{332} & \fd_{223}\\
0 & 
\end{bmatrix}$ & $\begin{bmatrix}
\fd_{323} & \fd_{232} \\
0 & 
\end{bmatrix}$ & 
\ref{prop:la3} \\
&&&&&\\
12.& $\la_3 = \mu_3+\nu_1$ 
& $\begin{bmatrix}
\fd^+_{111} & \fd_{223} & 0 \\
\fd^+_{221} & \fd_{223} & \\
\mathfrak{p} &&
\end{bmatrix}$ & $\begin{bmatrix}
\fd_{111} & \fd^+_{223}\\
\fd_{221} & 
\end{bmatrix}$ & $\begin{bmatrix}
\mathfrak{p} & 0 \\
\fd^+_{223} & 
\end{bmatrix}$ & 
\ref{prop:nu1} \\
&&&&&\\
13.& $\la_3 = \mu_3+\nu_3$
& $\begin{bmatrix}
0 & \fd_{111} & 0 \\
0 & \fd_{111} & \\
0 &&
\end{bmatrix}$ & $\begin{bmatrix}
0 & \fd_{111}\\
0 & 
\end{bmatrix}$ & $\begin{bmatrix}
0 & \fd_{111} \\
0 & 
\end{bmatrix}$ & 
\ref{prop:nu3} \\
&&&&&\\
14.& $\la_2 = \mu_3+\nu_2$
& $\begin{bmatrix}
\fd_{323} & 0 & 0 \\
\fd_{333} & 0 & \\
\fd_{323} &&
\end{bmatrix}$ & $\begin{bmatrix}
\fd_{323} & 0\\
\fd_{333} & 
\end{bmatrix}$ & $\begin{bmatrix}
0 & \fd_{111} \\
0 & 
\end{bmatrix}$ & 
\ref{prop:nu3} \\
&&&&&\\
15.& $\la_2 = \mu_2+\nu_3$
& $\begin{bmatrix}
\fd_{332} & 0 & 0 \\
\fd_{333} & 0 & \\
\fd_{332} &&
\end{bmatrix}$ & $\begin{bmatrix}
0 & \fd_{111} \\
0 & 
\end{bmatrix}$ & $\begin{bmatrix}
\fd_{332} & 0\\
\fd_{333} & 
\end{bmatrix}$ & 
\ref{prop:nu3} \\
&&&&&\\
16.& $\la_1 = \mu_3+\nu_1$
& $\begin{bmatrix}
\fd_{333} & \fd_{232} & 0 \\
\fd_{323} & \fd_{222} & \\
\fd_{323} &&
\end{bmatrix}$ & $\begin{bmatrix}
0 & \mu_{12}\\
0 & 
\end{bmatrix}$ & $\begin{bmatrix}
\fd_{333} & \fd_{232} \\
\fd_{323} & 
\end{bmatrix}$ & 
\ref{prop:nu3} \\
&&&&&\\ 
17.& $\la_1 = \mu_2+\nu_2$ 
& $\begin{bmatrix}
\fd_{111} & \fd^-_{223} & 0 \\
\fd^+_{222} & \fd_{113} & \\
\fd^+_{121} &&
\end{bmatrix}$ & $\begin{bmatrix}
\fd^-_{112} & 0\\
\fd_{221} & 
\end{bmatrix}$ & $\begin{bmatrix}
\fd^+_{213} & \fd_{121} \\
\la^{+}_{23} & 
\end{bmatrix}$ &
\ref{prop:la1}\\
&&&&&\\
18.& $\la_1 = \mu_1+\nu_3$
& $\begin{bmatrix}
\fd_{333} & \fd_{223} & 0 \\
\fd_{332} & \fd_{222} & \\
\fd_{332} &&
\end{bmatrix}$ & $\begin{bmatrix}
\fd_{333} & \fd_{223} \\
\fd_{332} & 
\end{bmatrix}$ &
$\begin{bmatrix}
0 & \nu_{12}\\
0 & 
\end{bmatrix}$ &  
\ref{prop:nu3} \\
&&&&&\\
\hline 
\end{longtable}

Note that all division numbers that appear in Table \ref{tab:div} are positive and do not exceed the size of the strip in which they appear.

\subsection{Minimal Paths}
In order to verify the proposed formulas for the coefficient $\clmn$, we will typically use induction on $|\la|-|\mu|$. In order to do this, we decompose $\nu$ into two pieces $\nu'$ and $\nu''$, and compute the coefficients obtained when we expand the product $P_\mu P_{\nu'}P_{\nu''}$ as a sum. Using associativity, we can expand this product in 2 different ways. 

\begin{lem} For fixed $\la,\mu,\ze,\ep$, $$\sum_{\ka \subset \la} c^\ka_{\mu,\ze} \cdot c^{\la}_{\ka,\ep} = \sum_{\eta \subset \la} c^{\eta}_{\ze,\ep} \cdot c^\la_{\mu,\eta}.$$
\label{lem:path}
\end{lem}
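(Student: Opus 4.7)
The plan is to derive the identity from the associativity of multiplication in the ring of symmetric functions $\Lambda \otimes \Q(\al)$, applied to the triple product $P_\mu P_\ze P_\ep$. Since the monic Jack polynomials $\{P_\la\}$ form a basis of $\Lambda \otimes \Q(\al)$, any element has a unique expansion, so I may compare coefficients of a fixed $P_\la$ on the two sides of the associativity equation $(P_\mu P_\ze) P_\ep = P_\mu (P_\ze P_\ep)$.

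First I would expand the left-hand side: by the definition of the coefficients $\clmn(\al)$, one has
\begin{equation*}
(P_\mu P_\ze) P_\ep = \left(\sum_{\ka} c^\ka_{\mu,\ze} P_\ka\right) P_\ep = \sum_\la \left(\sum_\ka c^\ka_{\mu,\ze}\, c^{\la}_{\ka,\ep}\right) P_\la.
\end{equation*}
An analogous expansion of the right-hand side yields
\begin{equation*}
P_\mu (P_\ze P_\ep) = P_\mu \sum_\eta c^{\eta}_{\ze,\ep} P_\eta = \sum_\la \left(\sum_\eta c^{\eta}_{\ze,\ep}\, c^\la_{\mu,\eta}\right) P_\la.
\end{equation*}
Equating the coefficients of $P_\la$ on the two sides gives the desired identity.

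The only remaining bookkeeping is to justify that the two summations may be taken over $\ka \subset \la$ and $\eta \subset \la$, respectively, rather than over all partitions. This follows from two facts: weight considerations force $|\ka| = |\mu|+|\ze|$ and $|\eta| = |\ze|+|\ep|$ for nonzero summands, and the Jack Littlewood-Richardson coefficients $\clmn(\al)$ vanish unless $\mu, \nu \subseteq \la$ (an $\al$-analogue of the standard containment property, which holds because $P_\mu P_\nu$ is supported on partitions $\la$ with $\mu,\nu \subseteq \la$ in dominance and shape). Thus terms outside the indicated range contribute zero, and the stated equality holds.

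I do not anticipate a serious obstacle: the statement is a formal consequence of associativity together with the fact that $\{P_\la\}$ is a basis. The entire argument is essentially the same one that yields the corresponding associativity identity for Schur Littlewood-Richardson coefficients, transferred verbatim to the Jack setting.
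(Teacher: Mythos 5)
Your proposal is correct and follows essentially the same route as the paper: expand the triple product $P_\mu P_\ze P_\ep$ in two ways via associativity and compare coefficients of $P_\la$ in the basis of monic Jack polynomials. The extra remark justifying the restriction of the sums to $\ka \subset \la$ and $\eta \subset \la$ is a harmless (and correct) piece of bookkeeping that the paper leaves implicit.
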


\begin{proof} We use the associativity of product $P_\mu P_\ze P_\ep$ to expand the coefficient of $P_\la$ in this product as a sum in 2 different ways:
\begin{align*}
(P_\mu P_{\ze})P_\ep &= \left(\sum_\ka c^{\ka}_{\mu,\ze} P_\ka \right) P_{\ep} \\
&= \sum_\xi \sum_\ka c^{\ka}_{\mu,\ze} \cdot c^{\xi}_{\ka,\ep} \; P_\xi 
\end{align*}
\begin{align*}
P_\mu (P_{\ze}P_\ep) &= P_\mu \left(\sum_\eta c^{\eta}_{\ze,\ep} P_{\eta} \right)\\
&= \sum_\xi \sum_\eta c^{\eta}_{\ze,\ep} \cdot c^{\xi}_{\mu,\eta} \; P_\xi.
\end{align*}
Picking out the coefficient of $P_\la$ in this expression tells us:
$$\sum_{\ka \subset \la} c^{\ka}_{\mu,\ze} \cdot c^{\la}_{\ka,\ep} = \sum_{\eta \subset \la} c^{\eta}_{\ze,\ep}\cdot c^{\la}_{\mu,\eta}.$$

\end{proof}

It turns out that for minimal triples $(\lmn)$ of partitions in $\Pn_3$, we can always decompose $\nu$ (or, equivalently, $\mu$) into subpartitions $\nu'$ and $\nu''$ such that all the coefficients that appear in the expression
$$\sum_{\ka \subset \la} c^{\ka}_{\mu,\nu'} \cdot c^{\la}_{\ka,\nu''} = \sum_{\eta \subset \la} c^{\la}_{\mu,\eta}\cdot c^{\eta}_{\nu',\nu''}$$
 are indexed by minimal triples. We can solve this equation for $\clmn$, and we call the resulting expression a \emph{minimal path}.

In particular, we can pick $\nu''$ to consist of a single row or column. In this case, coefficients involving $\nu''$ can be obtained using the Pieri rule. Since $\nu'$ is strictly smaller than $\nu$, we can apply our inductive hypothesis or results of a previous case to compute $c^{\ka}_{\mu,\nu'}$ for $\ka \subset \la$. On the other hand, since $|\eta|=|\nu|$, we use the following lemma to simplify coefficients of the form $c^{\la}_{\mu,\eta}$ for $\eta \subset \la,\eta \neq \nu$.

\begin{lem} $\clmn=c^{\la-\om^\mu_3-\om^\nu_3}_{\mu-\om^\mu_3,\nu-\om^\nu_3}$.
\label{lem:3to2}
\end{lem}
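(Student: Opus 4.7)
The plan is to reduce both sides to an identity in the ring $\Lambda_3 \otimes \Q(\al)$ by exploiting the well-known stability identity
$$P_{\xi + (1^3)}(\al; x_1, x_2, x_3) = x_1 x_2 x_3 \cdot P_\xi(\al; x_1, x_2, x_3) \qquad (\xi \in \Pn_3).$$
To justify this identity, observe that $x_1 x_2 x_3 \cdot P_\xi$ is symmetric, has leading monomial $x^{\xi + (1^3)}$, and a direct computation shows that multiplication by the $S_3$-invariant polynomial $x_1 x_2 x_3$ carries eigenfunctions of $D(\al)$ to eigenfunctions (with a shifted eigenvalue). The uniqueness characterization of $P_{\xi + (1^3)}$ as the monic eigenfunction of $D(\al)$ with that leading term then forces the equality.

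Iterating the identity $\mu_3$ and $\nu_3$ times in $\mu$ and $\nu$, respectively, gives
$$P_\mu = (x_1 x_2 x_3)^{\mu_3} P_{\mu - \om^\mu_3}, \qquad P_\nu = (x_1 x_2 x_3)^{\nu_3} P_{\nu - \om^\nu_3}.$$
Expanding $P_\mu P_\nu = \sum_\la \clmn P_\la$ therefore transforms into
$$(x_1 x_2 x_3)^{\mu_3 + \nu_3}\, P_{\mu - \om^\mu_3}\, P_{\nu - \om^\nu_3} = \sum_\la \clmn P_\la.$$
For each $\la$ with $\clmn \neq 0$, Horn's inequality 13 in Table \ref{tab:horn} forces $\la_3 \geq \mu_3 + \nu_3$, so applying the stability identity once more yields $P_\la = (x_1 x_2 x_3)^{\mu_3 + \nu_3} P_{\la - \om^\mu_3 - \om^\nu_3}$ for every contributing $\la$.

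Since $\Lambda_3$ is an integral domain, the factor $(x_1 x_2 x_3)^{\mu_3 + \nu_3}$ cancels, leaving
$$P_{\mu - \om^\mu_3}\, P_{\nu - \om^\nu_3} = \sum_\la \clmn\, P_{\la - \om^\mu_3 - \om^\nu_3},$$
where $\la$ ranges over partitions with $\la_3 \geq \mu_3 + \nu_3$. Since $\la \mapsto \la - \om^\mu_3 - \om^\nu_3$ is a bijection onto $\Pn_3$ on this index set, comparing the above with the direct expansion $P_{\mu - \om^\mu_3}\, P_{\nu - \om^\nu_3} = \sum_\eta c^\eta_{\mu - \om^\mu_3,\, \nu - \om^\nu_3} P_\eta$ and using linear independence of the $P_\eta$ yields the claimed equality. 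The only real obstacle is justifying the base identity $x_1 x_2 x_3 \cdot P_\xi = P_{\xi + (1^3)}$; everything else is bookkeeping. The same proof goes through verbatim in the Macdonald setting, with $D(\al)$ replaced by $D(q,t)$, so this lemma extends cleanly to the Macdonald coefficients considered in Section \ref{sec:macd}.
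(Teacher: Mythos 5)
Your proof is correct, but it takes a genuinely different route from the paper. The paper proves Lemma \ref{lem:3to2} by applying the associativity identity of Lemma \ref{lem:path} with $\ep=(1^3)$ and $\ze=\nu-\ep$, using the Pieri rule for columns (Theorem \ref{thm:jpieri}) to see that the only surviving terms give $\clmn\cdot c^{\nu}_{\ze,\ep}=c^{\la-\ep}_{\mu,\ze}\cdot c^{\la}_{\la-\ep,\ep}$ with $c^{\nu}_{\ze,\ep}=c^{\la}_{\la-\ep,\ep}=1$, and then peels off the height-$3$ columns one at a time, finishing with the $\mu\leftrightarrow\nu$ symmetry. You instead work in $\Lambda_3\otimes\Q(\al)$ and use the stability identity $P_{\xi+(1^3)}=x_1x_2x_3\,P_\xi$, which strips all full columns at once; this avoids both the path lemma and the Pieri rule, at the mild cost of fixing the number of variables at $3$ and implicitly using the standard fact that the structure constants for a triple in $\Pn_3$ are unchanged under this restriction. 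Your eigenfunction justification of the base identity is right (for homogeneous $f$ of degree $d$ one gets $D(\al)(x_1x_2x_3 f)=x_1x_2x_3\,(D(\al)f+(\al d+3)f)$, and triangularity plus monicity then pin down $P_{\xi+(1^3)}$), and your closing remark about the Macdonald case is also right, since $T_{q,i}$ scales $x_1x_2x_3$ by $q$.

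One step you should tighten: you invoke Horn inequality (13) to conclude that every $\la$ with $\clmn(\al)\neq 0$ satisfies $\la_3\geq\mu_3+\nu_3$. Horn's inequalities characterize nonvanishing of the Schur coefficient $\clmn(1)$; to apply them to the Jack coefficient you would need to know that $\clmn(\al)\neq 0$ forces $\clmn(1)\neq 0$, which is not established in your argument (the paper does lean on similar statements in later propositions, but your proof does not need to). The fix is internal to your own setup: with $m=\mu_3+\nu_3$, each $P_\la$ with $\la_3\geq m$ equals $(x_1x_2x_3)^{m}P_{\la-(m^3)}$, so $\set{P_\la:\la_3\geq m}$ is a basis of $(x_1x_2x_3)^{m}\Lambda_3\otimes\Q(\al)$; expanding $P_{\mu-\om^\mu_3}P_{\nu-\om^\nu_3}=\sum_\eta c^{\eta}_{\mu-\om^\mu_3,\nu-\om^\nu_3}P_\eta$ first and then multiplying by $(x_1x_2x_3)^{m}$ gives the $P$-expansion of $P_\mu P_\nu$ directly, and matching coefficients yields the lemma (and, as a byproduct, that $\clmn=0$ whenever $\la_3<m$) with no appeal to Horn at all.
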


\begin{proof} We first use Lemma \ref{lem:path} with $\ep=(1^3)$ and $\ze = \nu - \ep$, to get 
$\clmn \cdot c^{\nu}_{\ze,\ep} = c^{\la-\ep}_{\mu,\ze} \cdot c^\la_{\la-\ep,\ep}.$ By the Pieri rule, $c^{\nu}_{\ze,\ep}=c^\la_{\la-\ep,\ep}=1$, and so we get $\clmn = c^{\la-\ep}_{\mu,\ze}.$ We can then iterate this to get $\clmn = c^{\la-\om^\nu_3}_{\mu,\nu-\om^\nu_3}$. Finally, we use the symmetry between $\mu$ and $\nu$ to obtain our identity.

\end{proof}

Since, in general, $\eta_3 > \nu_3$ and thus $|\om^\eta_3|>|\om^\nu_3|$, this lemma allows us to reduce $c^{\la}_{\mu,\eta}$ such that it can also be computed by our inductive hypothesis or results of a previous case.

\subsection{Main Lemmas} \label{ssec:lems} We will reduce our minimal path expressions to one of the following 2 identities, depending on whether $\nu''$ is taken to be a row or a column in our decomposition of $\nu$.

We will use the notation $[n]$ to denote $\set{1,\ldots,n}$.

\begin{lem}
Let $n$ be fixed, and let $I=[n]$. Given sets $\si=\set{\si_i},\tau=\set{\tau_i}$ indexed by $i \in I$, we can define 
\begin{align*}
\be_j(\si,\tau) &=\set{\si_i-\si_j} \cup \set{\tau_i+\si_j}, \\
\phi_j(x;\si,\tau) &= \phi\left(x;\be_j(\si,\tau)\right), \\
\Phi(x;\si,\tau) &= \sum_{j \in I} \phi_j(x;\si,\tau)
\end{align*}
Then for all $x,\si,\tau$,
$$\Phi(x;\si,\tau) \equiv \Phi(x;\tau,\si).$$
\label{lem:col}
\end{lem}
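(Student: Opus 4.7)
The plan is to realize both $\Phi(x;\si,\tau)$ and $\Phi(x;\tau,\si)$ as sums of residues of one rational function in an auxiliary variable $z$, and then invoke the vanishing of the total residue on the Riemann sphere. Specifically, I would work with
\[
F(z)\;=\;\prod_{i=1}^{n}\frac{(z-\si_i+x)(z+\tau_i-x)}{(z-\si_i)(z+\tau_i)}.
\]
In the generic case where the $\si_i$ are pairwise distinct, the $\tau_i$ are pairwise distinct, and $\si_i+\tau_k\neq 0$ for all $i,k$, this rational function has exactly $2n$ simple poles, located at $z=\si_j$ and $z=-\tau_j$ for $j=1,\dots,n$.

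The first step is the residue computation. At $z=\si_j$ the numerator factor $(z-\si_j+x)$ evaluates to $x$, and after flipping the sign of numerator and denominator in each factor of the first surviving product the residue simplifies to
\[
x\cdot\prod_{i\neq j}\frac{\si_i-\si_j-x}{\si_i-\si_j}\cdot\prod_{i=1}^{n}\frac{\tau_i+\si_j-x}{\tau_i+\si_j}\;=\;x\,\phi_j(x;\si,\tau).
\]
A parallel calculation at $z=-\tau_j$, where the vanishing numerator factor is now $(z+\tau_j-x)|_{z=-\tau_j}=-x$, yields residue $-x\,\phi_j(x;\tau,\si)$. The second step is to show these finite residues sum to zero. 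Both numerator and denominator of $F$ are monic polynomials of degree $2n$ in $z$, and an easy expansion shows that their $z^{2n-1}$ coefficients agree (both equal $\sum_i\tau_i-\sum_i\si_i$, since the $\pm x$ contributions cancel between the two groups of factors). Hence $F(z)=1+O(1/z^2)$ as $z\to\infty$, the residue at infinity is zero, and the sum of finite residues vanishes. Combining yields $x\,\Phi(x;\si,\tau)-x\,\Phi(x;\tau,\si)=0$, and since both sides are polynomials in $x$ that agree for every $x\neq 0$, they agree identically.

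The main obstacle to writing this out cleanly is the degenerate case in which some $\si_i-\si_j$ or some $\si_i+\tau_k$ vanishes, since these are exactly the situations where the ``skip zero entries'' convention in the definition of $\phi(x;\be)$ takes effect (and where $F$ acquires double poles or zero/pole cancellations). I would handle this by clearing denominators: after multiplication by $\prod_{i\neq j}(\si_i-\si_j)\prod_{i\neq j}(\tau_i-\tau_j)\prod_{i,k}(\si_i+\tau_k)$, both sides of the claimed identity become polynomials in $x$ and in the entries of $\si,\tau$. The residue argument establishes the identity on the Zariski-open generic locus, and it extends by density (equivalently, by continuity) to all values. No separate combinatorial case analysis of coincidences is then required.
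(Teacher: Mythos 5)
Your residue argument for the generic case is correct, and it is a genuinely different route from the paper's. The paper proves the lemma by induction on $n$: each $\phi_j$ has degree at most $2n-1$ in $x$, and the difference $\Phi(x;\si,\tau)-\Phi(x;\tau,\si)$ is shown to vanish at the $n^2$ points $x_{kl}=\si_k+\tau_l$ because evaluation at $x_{kl}$ collapses each $\phi_j$ to the corresponding $\phi_j$ for the smaller families $\si_{i\neq k},\tau_{i\neq l}$, so the inductive hypothesis applies. You instead encode the whole identity as the vanishing of the total residue of $F(z)=\prod_i\frac{(z-\si_i+x)(z+\tau_i-x)}{(z-\si_i)(z+\tau_i)}$; I checked that the residues at $z=\si_j$ and $z=-\tau_j$ are indeed $x\,\phi_j(x;\si,\tau)$ and $-x\,\phi_j(x;\tau,\si)$ (the sign flips in the $\si$-differences cancel in numerator and denominator), and that the $z^{2n-1}$ coefficients of numerator and denominator both equal $\sum_i(\tau_i-\si_i)$, so $F(z)=1+O(z^{-2})$ and the residue at infinity vanishes. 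This is shorter and more conceptual than the induction, since it exhibits both sums as partial-fraction data of a single function; what the paper's template buys is that the same scheme (degree count, vanishing at points $\si_k+\tau_l+k-1$, reduction to smaller parameters) is reused almost verbatim to prove Lemma \ref{lem:row}, where a residue reformulation is less immediate.

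However, your last paragraph on degeneracies does not work as stated. Clearing denominators by $D=\prod_{i\neq j}(\si_i-\si_j)\prod_{i\neq j}(\tau_i-\tau_j)\prod_{i,k}(\si_i+\tau_k)$ and invoking Zariski density only yields the polynomial identity $D\cdot\Phi(x;\si,\tau)=D\cdot\Phi(x;\tau,\si)$, which is vacuous ($0=0$) precisely on the locus $D=0$; and on that locus the lemma's statement is \emph{not} the specialization of the generic formula, because $\phi(x;\be)$ discards the zero entries of $\be$, a discontinuous convention. In fact the identity with that convention can genuinely fail there: for $n=2$, $\si=(0,0)$, $\tau=(1,2)$ one has $\Phi(x;\si,\tau)=(1-x)(2-x)$, which vanishes at $x=1$, while $\Phi(x;\tau,\si)=(1-x)^3+(1+x)(2-x)^2/4$ equals $1/2$ at $x=1$. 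So no continuity argument can absorb coincidences; the correct repair is simply to state and use the lemma under the hypotheses you already imposed for the residue computation ($\si_i$ pairwise distinct, $\tau_i$ pairwise distinct, $\si_i+\tau_k\neq 0$), which is also what the paper's own proof implicitly assumes when it divides by $\si_i-\si_j$ and $\tau_l+\si_j$, with any degenerate specializations handled at the point of application rather than inside the lemma.
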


\begin{proof}
We use induction on $n$. If $n=1$, then $$\be_1(\si,\tau)=\be_1(\tau,\si)=\set{0,\tau_1+\si_1},$$ and so 
$$\Phi(x;\si,\tau) = \Phi(x;\tau,\si) = \frac{\tau_1+\si_1-x}{\tau_1+\si_1}.$$

For greater $n$, we note that each $\phi_j(x;\si,\tau)$ and $\phi_j(x;\tau,\si)$ is a polynomial of degree $2n-1$ in $x$. We will show that the expression $\Phi(x;\si,\tau) - \Phi(x;\tau,\si)$ vanishes at all points of the form $x_{kl}=(\si_k+\tau_l)$, $k,l \in I$, and therefore must be identically $0$. If we fix some $k$ and $l$ in $I$, we see that
$$\phi_k(x_{kl};\si,\tau) = \phi_l(x_{kl};\tau,\si)=0.$$ 
If $j \neq k$, then 
$$\phi_j(x_{kl};\si,\tau) =\prod_{i \neq j} \frac{\si_i-\si_j-\si_k-\tau_l}{\si_i-\si_j}\prod_{i} \frac{\tau_i+\si_j-\si_k-\tau_l}{\tau_i+\si_j}.$$
We factor out the $i=k$ term from the first product and the $i=l$ term from the second product to get 
$$\phi_j(x_{kl};\si,\tau) = \frac{(-\si_j-\tau_l)}{(\si_k-\si_j)}\frac{(\si_j-\si_k)}{(\tau_l+\si_j)}\left(\prod_{i \neq j,k} \frac{\si_i-\si_j-\si_k-\tau_l}{\si_i-\si_j}\prod_{i \neq l} \frac{\tau_i+\si_j-\si_k-\tau_l}{\tau_i+\si_j} \right).$$
Since $$\frac{(-\si_j-\tau_l)}{(\si_k-\si_j)}\frac{(\si_j-\si_k)}{(\tau_l+\si_j)} = 1,$$
we get that
\begin{align*}
\phi_j(x_{kl};\si,\tau) &=\prod_{i \neq l} \frac{\tau_i+\si_j+\si_k-\tau_l}{\tau_i+\si_j} \prod_{i \neq j,k} \frac{\si_i-\si_j-\si_k-\tau_l}{\si_i-\si_j} \\
&=\phi_j(x_{kl};\si_{i\neq k},\tau_{i\neq l}).
\end{align*}

By a similar calculation, we have 
$$\phi_j(x_{kl};\tau,\si) = \phi_j(x_{kl};\tau_{i\neq l},\si_{i\neq k}).$$
Therefore
$$\Phi(x_{kl};\si,\tau) - \Phi(x_{kl};\tau,\si) = \Phi(x_{kl};\si_{i\neq k},\tau_{i\neq l}) - \Phi(x_{kl};\tau_{i\neq l},\si_{i\neq k}),$$
which is identically 0, by the inductive hypothesis.

\end{proof}

\begin{lem} Fix $n$ and let $\si=(\si_1,\si_2),\tau=(\tau_1,\tau_2)$. Let $\si^k_i$ denote $\si_i+k$. Let
\begin{align*}
\be^n_t(j;\si,\tau) &=\bigcup_{\substack{i \in [2]\\ k \in [n-t]}} \set{-k}\cup\set{\si^{k-1}_i-\si^{t}_j: i \neq j} \cup\set{\tau^{k-1}_i+\si^{t}_j}, \\
\phi^n_t(x;\si,\tau) &= \phi\left(x;\be^n_t(1;\si,\tau)\right) \cdot \phi\left(x;\be^n_{n-t}(2;\si,\tau)\right), \\
\Phi_n(x;\si,\tau) &=\sum^n_{t=0} \phi^n_t(x;\si,\tau)
\end{align*}
Then for all $x,\si,\tau$,
\begin{equation*}
\Phi_n(x;\si,\tau)  \equiv \Phi_n(x;\tau,\si).
\end{equation*}
\label{lem:row}
\end{lem}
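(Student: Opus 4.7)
My plan is to mirror the strategy used in Lemma~\ref{lem:col}: proceed by induction on $n$ and show that, after clearing denominators, the difference $\Phi_n(x;\si,\tau) - \Phi_n(x;\tau,\si)$ is a polynomial in $x$ of bounded degree that vanishes at strictly more values of $x$ than this bound, forcing it to vanish identically. The base case $n=0$ is immediate: $[n-t] = \emptyset$ forces $\be^0_0(j;\si,\tau) = \emptyset$, so $\phi^0_0 \equiv 1$ and both sides equal $1$.

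For the inductive step, the natural evaluation points are the crossed values $x^\star = \tau_a + \si_b + m$ for $a,b \in [2]$ and $m \in \{0,1,\ldots,n-1\}$; each such $x^\star$ coincides with $\tau^{k-1}_a + \si^t_b$ for every $(k,t)$ with $k-1+t=m$. Hence at $x^\star$ a specific block of summands $\phi^n_t(x^\star;\si,\tau)$ vanishes by direct inspection, namely those for which $x^\star \in \be^n_t(1;\si,\tau) \cup \be^n_{n-t}(2;\si,\tau)$, and an analogous block vanishes on the $\tau,\si$ side. In each remaining nonvanishing summand I plan to extract the offending $\tau$-sum factor $\tfrac{\tau^{k-1}_a + \si^t_b - x^\star}{\tau^{k-1}_a + \si^t_b}$ and pair it with a matching $\si$-difference factor via the elementary identity $\tfrac{a-x^\star}{a}\tfrac{c-x^\star}{c} = 1$ whenever $a+c = x^\star$, which is the prototype of \eqref{eq:phid2} and exactly the cancellation used in Lemma~\ref{lem:col}. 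After these cancellations, what remains on each side should be recognisable as $\Phi_{n-1}$ evaluated at a shifted argument with one index of $\si$ or $\tau$ suppressed, and the two residuals then coincide by the inductive hypothesis.

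The main obstacle, beyond the combinatorics already present in Lemma~\ref{lem:col}, is the extra structural layer here: each summand $\phi^n_t$ is now a \emph{product} of two $\phi$'s anchored at $\si^{t}_{1}$ and $\si^{n-t}_{2}$, and the multisets $\be^n_t(j;\si,\tau)$ include the diagonal terms $\{-k\}$ with no dependence on $\si$ or $\tau$. To manage this I would factor each $\phi^n_t(x;\si,\tau)$ into three explicit sub-products corresponding to the three types of elements in $\be^n_t(j;\si,\tau)$ (the $\{-k\}$ block, the $\si$-difference block, and the $\tau$-sum block). The $\{-k\}$ block is symmetric under the $\si \leftrightarrow \tau$ swap and so pulls out of the difference, while \eqref{eq:phid1} and \eqref{eq:phid2} will handle the other two blocks. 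The step I expect to be the bottleneck is verifying that after pairing and cancelling factors at $x = x^\star$, the pair of reduced $\phi$'s coming from $j=1$ at level $t$ and $j=2$ at level $n-t$ fit together as a single $\phi^{n-1}_{t'}$ term for the appropriate $t'$, so that the residual is genuinely one instance of $\Phi_{n-1}(y;\si',\tau')$ at a shifted $y$ rather than a sum of a different shape; tracking the cumulative shifts $\si^k_i = \si_i + k$ and $\tau^k_i = \tau_i + k$ faithfully through this reduction is where the bookkeeping will be most delicate.
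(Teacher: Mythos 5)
Your skeleton --- induction on $n$, a degree count, and evaluation at the crossed points $x=\tau_a+\si_b+m$ --- matches the paper's, but two specific parts of your plan break down. First, the count of vanishing points. Each summand $\phi^n_t(x;\si,\tau)$ has degree $4n$ in $x$ (the multiset $\be^n_t(1;\si,\tau)$ contributes $4(n-t)$ roots and $\be^n_{n-t}(2;\si,\tau)$ contributes $4t$), so you must exhibit $4n+1$ common zeros of the difference, while your list $\tau_a+\si_b+m$ with $a,b\in[2]$, $m\in\{0,\dots,n-1\}$ has only $4n$ elements. The missing zero is supplied exactly by the block $\set{-k}$ that you propose to set aside as ``symmetric'': for $n\ge 1$ and every $t$, at least one of $[n-t]$, $[t]$ is nonempty, so every summand on both sides shares a root coming from these entries, and this is the paper's extra point. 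Note also that this block depends on $t$, so it cannot be factored out of the sum over $t$; worse, it is an active ingredient at the crossed points, where via \eqref{eq:phid2} the factor $\ab{x_k;-t}_{k}$ is traded for $1/\ab{x_k;\tau_1+\si_1+t}_{k}$.

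Second, the shape of the residual after cancellation. Here $\si$ and $\tau$ have exactly two entries and each summand couples $j=1$ at level $t$ with $j=2$ at level $n-t$, so there is no way to ``suppress one index of $\si$ or $\tau$'' and remain inside the inductive family; the Lemma~\ref{lem:col}-style reduction you anticipate is not the right one. What actually happens at $x_k=\si_1+\tau_1+k-1$ (after using the $\si_1\leftrightarrow\si_2$ symmetry to normalize to $l=m=1$) is: the summands with $t<k$ vanish, and for $t\ge k$ one has $\phi^n_t(x_k;\si,\tau)=\phi^{n-k}_{t-k}(x_k;\si+k\mathbf{e}_1,\tau+k\mathbf{e}_1)\,c_k$ with $c_k=\ab{x_k;\tau_2+\si_2+n-k}_{k}\big/\ab{x_k;\tau_1+\si_1+n}_{k}$ independent of $t$ and symmetric in $\si,\tau$ --- the paper's identity \eqref{eq:lem2}. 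Both components of $\si,\tau$ are retained, their first entries are shifted by $k$, and $n$ drops to $n-k$, not to $n-1$, so the induction must allow all smaller values of $n$. Proving this requires \eqref{eq:phid1} to compare anchors and \eqref{eq:phid2} twice (to cancel $\ab{x_k;-t}_{k}$ against $\ab{x_k;\tau_1+\si_1+t}_{k}$, and to cancel the pair $\ab{x_k;\si_1-\si_2-n+t}_{k}\,\ab{x_k;\tau_1+\si_2+n-t}_{k}$), which is more than the single $\tau$-sum/$\si$-difference pairing you describe. So the gap sits precisely at the step you flag as the bottleneck, and the fix is the recursion \eqref{eq:lem2} rather than an index-suppression reduction.
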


\begin{proof}
We prove this identity by induction on $n$. When $n=1$, the result follows from Lemma \ref{lem:col}. 

For general $n$, we note that each $\phi^n_t(x;\si,\tau)$ and $\phi^n_t(x;\tau,\si)$ is a polynomial of degree $4n$ in $x$, so we must show that $\Phi_n(x;\si,\tau) - \Phi_n(x;\tau,\si)$ vanishes at $4n+1$ points. Note that all terms vanish at $x=1$ since $1$ is contained in at least one of the sets $[n-t]$ or $[t]$. We will show that $\Phi_n(x;\si,\tau) - \Phi_n(x;\tau,\si)$ also vanishes at the $4n$ points given by $x=(\si_l+\tau_m+k-1)$, $l,m \in [2], k \in [n]$. Since a transposition of $\si_1$ and $\si_2$ takes $\phi^n_t(x;\si,\tau)$ to $\phi^n_{n-t}(x;\si,\tau)$ and keeps $\phi^n_t(x;\tau,\si)$ fixed, we can assume without loss of generality that $l=m=1$, and so we let $x_k=(\si_1+\tau_1+k-1)$.

We claim that 
\begin{equation}
\phi^n_t(x;\si,\tau)=\left\{\begin{matrix}
0 &\mbox{ if } t<k \\
\phi^{n-k}_{t-k}(x_k;\si+k\mathbf{e}_1,\tau+k\mathbf{e}_1) \cdot c^n_k &\mbox{ if } t\geq k
\end{matrix} \right.
\label{eq:lem2}
\end{equation}
where $\mathbf{e}_1=(1,0)$ and $c^n_k$ is a term that does not depend on $t$, and is symmetric in $\si$ and $\tau$. 

We note that if $t < k$, then for $j=k-t$,
$$\tau^{j-1}_1+\si^t_1-x_k=t+j-k=0.$$
Since $k \in [n]$, $j$ must be in $[n-t]$, and so this implies that for $t < k$
$$\phi^n_t(x;\si,\tau) = 0.$$

Now assume that $t\geq k$. By definition, we have that
$$\phi^n_t(x;\si,\tau) = \phi\left(x;\be^n_t(1;\si,\tau)\right) \cdot \phi\left(x;\be^n_{n-t}(2;\si,\tau)\right),$$
and so we work with each of these two factors separately.

Let $\ab{a}_j=\ab{x_k;a}_j$. We have that
\begin{align*}
\phi\left(x_k;\be^n_t(1;\si,\tau)\right) &= \ab{-n+t}_{n-t} \ab{\si_2-\si_1-t}_{n-t} \ab{\tau_2+\si_1+t}_{n-t} \ab{\tau_1+\si_1+t}_{n-t}, \\
\phi\left(x_k;\be^{n-k}_{t-k}(1;\si+k\mathbf{e}_1,\tau+k\mathbf{e}_1)\right) &= \ab{-n+t}_{n-t} \ab{\si_2-\si_1-t}_{n-t} \ab{\tau_2+\si_1+t}_{n-t} \ab{\tau_1+\si_1+k+t}_{n-t}. 
\end{align*}
Note that the first three factors on the right hand side are the same in both lines. Therefore, if we divide the first expression by the second, we can simplify the ratio using \ref{eq:phid1} to obtain:
\begin{align}
\frac{\phi\left(x_k;\be^n_t(1;\si,\tau)\right)}{\phi\left(x_k;\be^{n-k}_{t-k}(1;\si+k\mathbf{e}_1,\tau+k\mathbf{e}_1)\right)} &= \frac{\ab{\tau_1+\si_1+t}_{n-t}}{\ab{\tau_1+\si_1+k+t}_{n-t}} \nonumber \\
&= \frac{\ab{\tau_1+\si_1+t}_{k}}{\ab{\tau_1+\si_1+n}_{k}}. 
\label{eq:bet1}
\end{align}

On the other hand, we have that:
\begin{align*}
\phi\left(x_k;\be^n_t(2;\si,\tau)\right) =& \ab{-t}_{t}\cdot \ab{\si_1-\si_2-n+t}_{t}\cdot\ab{\tau_1+\si_2+n-t}_{t} \\
&\qquad \cdot\ab{\tau_2+\si_2+n-t}_{t}, \\
\phi\left(x_k;\be^{n-k}_{t-k}(2;\si+k\mathbf{e}_1,\tau+k\mathbf{e}_1)\right) =& \ab{-t+k}_{t-k}\cdot \ab{\si_1-\si_2+k-n+t}_{t-k}\cdot\ab{\tau_1+\si_2+k+n-t}_{t-k} \\
&\qquad \cdot \ab{\tau_2+\si_2+n-t}_{t-k}. 
\end{align*}
Therefore if we divide the first expression by the second, and once again use \ref{eq:phid1} to simplify the ratio, we get
\begin{equation}
\frac{\phi\left(x_k;\be^n_t(2;\si,\tau)\right)}{\phi\left(x_k;\be^{n-k}_{t-k}(2;\si+k\mathbf{e}_1,\tau+k\mathbf{e}_1)\right)} = \ab{-t}_{k} \ab{\si_1-\si_2-n+t}_{k} \ab{\tau_1+\si_2+n-t}_{k}\ab{\tau_2+\si_2+n-k}_{k}.
\label{eq:bet2a}
\end{equation}

By \ref{eq:phid2}, we can rewrite the first term on the right hand side of this expression as
$$\ab{-t}_{k} = \frac{1}{\ab{\tau_1+\si_1+t}_{k}}$$
since $\tau_1+\si_1+t-t=x_k+k-1$. We can also simplify the the middle two terms using the same identity. Since
$$(\si_1-\si_2-n+t) + (\tau_1+\si_2+n-t)=x_k - k + 1,$$
by \ref{eq:phid2}, we get that 
$$\ab{\si_1-\si_2-n+t}_{k} \ab{\tau_1+\si_2+n-t}_{k} = 1.$$
Therefore, we can reduce \ref{eq:bet2a} to 
\begin{equation}
\frac{\phi\left(x_k;\be^n_t(2;\si,\tau)\right)}{\phi\left(x_k;\be^{n-k}_{t-k}(2;\si+k\mathbf{e}_1,\tau+k\mathbf{e}_1)\right)} = \frac{\ab{\tau_2+\si_2+n-k}_{k}}{\ab{\tau_1+\si_1+t}_{k}}.
\label{eq:bet2}
\end{equation}

Finally, we multiply the expressions in \ref{eq:bet1} and \ref{eq:bet2} to get that
\begin{align*}
\frac{\phi^n_{t} (x_k;\si,\tau)}{\phi^{n-k}_{t-k} (x_k;\si+k\mathbf{e}_1,\tau+k\mathbf{e}_1)} &= \frac{\ab{\tau_1+\si_1+t}_{k}}{\ab{\tau_1+\si_1+n}_{k}} \cdot \frac{\ab{\tau_2+\si_2+n-k}_{k}}{\ab{\tau_1+\si_1+t}_{k}} \\
&=\frac{\ab{\tau_2+\si_2+n-k}_{k}}{\ab{\tau_1+\si_1+n}_{k}},
\end{align*}
which completes our proof of equation \ref{eq:lem2}, with 
$$c_k = \frac{\ab{\tau_2+\si_2+n-k}_{k}}{\ab{\tau_1+\si_1+n}_{k}}.$$
It is easy to see that this $c_k$ does not depend on $t$ and is symmetric in $\si$ and $\tau.$

Since each $\phi^{n}_t$ contains this factor of $c_k$ whenever $t\geq k$, it follows that
$$\Phi_n(x_k;\si,\tau) = \Phi_{n-k}(x_k;\si+k\mathbf{e}_1,\tau+k\mathbf{e}_1) c_k.$$
Similarly, by transposing $\si$ and $\tau$, we get
$$\Phi_n(x_k;\tau,\si) = \Phi_{n-k}(x_k;\tau+k\mathbf{e}_1,\si+k\mathbf{e}_1)  c_k.$$
Thus, by the inductive hypothesis:
$$\Phi_n(x_k;\si,\tau)-\Phi_n(x_k;\tau,\si) = 0.$$

\end{proof}

\subsection{Verification of Division Number Formulas} \label{sec:ver} As in Section \ref{sec:div}, we will use the following notation: 
\begin{align*}
\fd_{ijk} &= |\la_i + \mu_j - \nu_k|, \\
\xi_{ij} &= \xi_i - \xi_j, \\
\mathfrak{p} &= \max(\la_3-\mu_2-\nu_3,0), \\
x^{\pm} &= x\pm \mathfrak{p}.
\end{align*}
Given a set of division numbers $\mathfrak{n}$, recall that
\begin{equation}
\mathbf{d}^\la_{\mu,\nu}(\mathfrak{n}) = \prod_{i\leq j} [h_\la^{ij};n^\la_{ij}]\cdot [-h_\mu^{ij};n^\mu_{ij}]\cdot [-h_\nu^{ij};n^\nu_{ij}],
\label{eq:dlmn}
\end{equation}
where $n^\la_{ij}$ is the division number corresponding to the $i^{\mbox{\tiny th}}$ strip in $\om^{\la}_j$, and $n^\xi_{ij}$ is the division number corresponding to the $i^{\mbox{\tiny th}}$ strip in $\om^\xi_{j-1}$ for $\xi \in \set{\mu,\nu}$. 

For each minimal triple $(\lmn)$ we have a set of division numbers $\mathfrak{n}(\lmn)$, as listed in Table \ref{tab:div}. Let 
$$\dlmn = \mathbf{d}^\la_{\mu,\nu}(\mathfrak{n}(\lmn)).$$ We verify that for each case in Table \ref{tab:div}, $\clmn = \dlmn$.

In order to do this, for each fixed triple $(\lmn)$, we first decompose $\nu$ into two subpartitions $\ze$ and $\ep$, such that $\ep$ consists of a single row or column. By Lemma \ref{lem:path}, we then get a minimal path of the form:
\begin{equation}
\sum_{i} c^{\ka(i)}_{\mu,\ze} \cdot c^{\la}_{\ka(i),\ep} = \sum_{i} c^{\eta(i)}_{\ze,\ep} \cdot c^\la_{\mu,\eta(i)},
\end{equation}
where for some $i$, $\eta(i)=\nu$. For such a path, we can use either a previously established result or induction to get that each $c^{\xi_1}_{\xi_2,\xi_3}$ equals $d^{\xi_1}_{\xi_2,\xi_3}$ for all the triples $(\xi_1,\xi_2,\xi_3)\neq(\lmn)$. Therefore, to show that $\clmn=\dlmn$, it suffices to instead verify the analogous identity for $\dlmn$:
\begin{equation}
\sum_{i} d^{\ka(i)}_{\mu,\ze} \cdot d^{\la}_{\ka(i),\ep} = \sum_{i} d^{\eta(i)}_{\ze,\ep} \cdot d^\la_{\mu,\eta(i)}.
\label{eq:did}
\end{equation}

When the sums on either side of equation \ref{eq:did} consist of more than one term, we can prove this identity by writing each $d^{\ka(i)}_{\mu,\ze} \cdot d^{\la}_{\ka(i),\ep}$ and $d^{\eta(i)}_{\ze,\ep} \cdot d^\la_{\mu,\eta(i)}$ as the product of $\dlmn \cdot d^\nu_{\ze,\ep}$ with additional terms produced by changes to the anchors and division numbers, as given by equation \ref{eq:phid1}. We will then show that the additional factors produced satisfy equation \ref{eq:did} by showing that they fall into the form of Lemma \ref{lem:col} (if $\ep$ is a single column) or \ref{lem:row} (if $\ep$ is a single row). 

In the proofs below, we will also use the fact that since $\clmn = c^\la_{\nu,\mu}$, any division number formula that we prove for a coefficient based on a condition involving $\mu$ and $\nu$ can be subsequently be used for the condition obtained by interchanging $\mu$ and $\nu$, as long as one also interchanges the role of $\mu$ and $\nu$ in the formula.

\begin{prop}
If $\mu_3=\mu_2$, then $c^{\la}_{\mu,\nu}$ is given by 
$$\la: \begin{bmatrix}
\fd_{333} & \fd_{222} & 0 \\
\fd_{333} & \fd_{222} & \\
\fd_{333} &&
\end{bmatrix} \qquad 
\mu: \begin{bmatrix}
0 & \mathfrak{d}_{111}\\
0 & 
\end{bmatrix} \qquad 
\nu: \begin{bmatrix}
\fd_{333} & \fd_{222} \\
\fd_{333} & 
\end{bmatrix}$$
\label{prop:horn1}
\end{prop}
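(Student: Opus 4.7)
The plan is to reduce $c^\la_{\mu,\nu}$ to an application of the Pieri rule for rows by eliminating the height-three block $\om^\mu_3$.  The hypothesis $\mu_3=\mu_2$ forces $\om^\mu_2$ to be empty, so $\mu$ consists only of $\om^\mu_3$ together with the single-row block $\om^\mu_1$ of width $m:=\mu_1-\mu_3$.

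The first step is to peel off $\om^\mu_3$ one column at a time.  Applying Lemma \ref{lem:path} with $\ep=(1^3)$ and $\ze=\mu-\ep$ (symmetrically to the $\nu$-side argument in the proof of Lemma \ref{lem:3to2}, and using the Pieri rule to evaluate the two Pieri-type coefficients as $1$), one obtains $c^\la_{\mu,\nu}=c^{\la-(1^3)}_{\mu-(1^3),\nu}$.  Iterating $\mu_3$ times yields
$$c^\la_{\mu,\nu}=c^{\bar\la}_{\bar\mu,\nu},\qquad \bar\la:=\la-\om^\mu_3,\quad \bar\mu:=(m).$$
Horn inequalities 13, 14, and 16 (Table \ref{tab:horn}) force $\bar\la\supset\nu$, while 9 and 11 give $\bar\la_2\leq\nu_1$ and $\bar\la_3\leq\nu_2$, so $\bar\la/\nu$ is a horizontal $m$-strip.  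Since $\bar\mu$ is a single row, the symmetry $c^{\bar\la}_{\bar\mu,\nu}=c^{\bar\la}_{\nu,\bar\mu}$ combined with the row version of Theorem \ref{thm:jpieri}---obtained by applying equation \ref{eq:transpose} to the column Pieri rule---gives an explicit product of hook ratios over the horizontal-strip analogue of the set $X(\bar\la/\nu)$, and this is the target quantity that $\dlmn$ must equal.

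The third step is to expand $\dlmn$ using the division numbers from the proposition and verify the match.  The division numbers in $\om^\mu_2$ vanish because that block is empty, and the matching assignments of $\fd_{333}$ and $\fd_{222}$ on paired strips of $\la$ and $\nu$ produce $\phi$-polynomial factors that cancel systematically via identity \ref{eq:phid2}, mirroring the column-by-column removal of $\om^\mu_3$ in the first step.  What survives these cancellations is the factor $[-h^{11}_\mu;\fd_{111}]$ from $\mu$ together with residual factors from the top strips of $\la$ and $\nu$, which should match the Pieri formula box-for-box after applying \ref{eq:phid1} to realign anchors.

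The main obstacle will be this explicit bookkeeping: one must align each hook ratio in the Pieri formula against the appropriate $\phi$-polynomial factor in $\dlmn$ and verify the cancellations.  The task is made tractable by the near-duplication between the $\la$-side and the $\nu$-side of the division number assignment---both involving only $\fd_{333}$ and $\fd_{222}$ in identical positions---so identity \ref{eq:phid2} effects a clean pairwise cancellation and the residual calculation reduces to a small explicit verification tied only to the single-row block $\om^\mu_1$.
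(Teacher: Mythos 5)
Your proposal is correct and follows essentially the same route as the paper: the paper likewise invokes Lemma \ref{lem:3to2} to strip off $\om^\mu_3$ (you simply re-derive its $\mu$-side), reduces to a single-row $\bar\mu$, and applies the row Pieri rule obtained from Theorem \ref{thm:jpieri} via equation \ref{eq:transpose}. The only difference is in the final bookkeeping, where the paper phrases the matching as a direct exchange of the $\fd_{131}$ flipped hooks between $\om^\la_1$ and $\om^\mu_1$ (leaving $\fd_{111}$ flipped hooks in $\om^\mu_1$ and none in $\om^\la_1$) rather than your $\phi$-identity cancellations, but this is the same verification in different clothing.
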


\begin{proof} By Lemma \ref{lem:3to2}, $\clmn=c^{\la-\om^{\mu}_3}_{\mu-\om^{\mu}_3,\nu}.$ Since, $\mu-\om^{\mu}_3$ consists of a single part, we can use the Pieri rule to compute $c^{\la-\om^{\mu}_3}_{\mu-\om^{\mu}_3,\nu}.$
The Pieri rule for rows can be obtained from Theorem \ref{thm:jpieri} and equation \ref{eq:transpose}. In particular, we get that $c^{\la-\om^{\mu}_3}_{\mu-\om^{\mu}_3,\nu}$ is obtained by treating all the hooks in $\lmn$ as flipped hooks, except those corresponding to boxes $(i,j) \in \nu,\la$ such that $\nu_i'=\la_i'$ and $\nu_j < \la_j$. Finally, we note that the $\fd_{131}$ flipped hooks in $\om^{\la}_1$ can be exchanged with the last $\fd_{131}$ flipped hooks in $\om^{\mu}_1$, leaving no flipped hooks in $\om^{\la}_1$ and only $\mu_{13}-\fd_{131}=\fd_{111}$ flipped hooks in $\om^\mu_1$. 
\end{proof}

\begin{prop}
If $\mu_2=\mu_1$, then $c^{\la}_{\mu,\nu}$ is given by:
$$\la: \begin{bmatrix}
\fd_{111} & \fd_{213} & 0 \\
\fd_{222} & \fd_{213} & \\
\fd_{111} &&
\end{bmatrix} \qquad
\mu:\begin{bmatrix}
0 & 0\\
\fd_{333} & 
\end{bmatrix} \qquad
\nu: \begin{bmatrix}
\fd_{213} & \fd_{111} \\
\fd_{213} & 
\end{bmatrix}$$ 
\label{prop:horn2}
\end{prop}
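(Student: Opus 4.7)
As in Proposition \ref{prop:horn1}, the first move is to apply Lemma \ref{lem:3to2} to reduce to $\ell(\mu),\ell(\nu)\le 2$. Under the hypothesis $\mu_2=\mu_1$, the block $\om^\mu_1$ is empty, so $\mu-\om^\mu_3=\om^\mu_2$ is the two-row rectangle $(a,a)$ with $a=\mu_{13}$, rather than a single row. Consequently the Pieri rule does not finish the proof as it did in Proposition \ref{prop:horn1}; my plan is instead to induct on $|\nu|$ using the minimal-path machinery of Section \ref{sec:proof}.

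For the base case, if $\ell(\nu)\le 1$ then $\nu$ is a single row (or empty) and Theorem \ref{thm:jpieri} together with equation \ref{eq:transpose} computes $c^\la_{\mu,\nu}$ directly, from which the formula in Table \ref{tab:div} is straightforward to check. For the inductive step with $\ell(\nu)=2$, decompose $\nu=\zeta+\epsilon$ where $\epsilon=(1,1)$ is the rightmost column of $\nu$ (so $\zeta=(\nu_1-1,\nu_2-1)$) and apply Lemma \ref{lem:path}:
\[
\sum_\kappa c^\kappa_{\mu,\zeta}\,c^\la_{\kappa,\epsilon}=\sum_\eta c^\eta_{\zeta,\epsilon}\,c^\la_{\mu,\eta}.
\]
On the left, $c^\kappa_{\mu,\zeta}$ is known by the inductive hypothesis (since $|\zeta|=|\nu|-2$), while $c^\la_{\kappa,\epsilon}$ is a column Pieri coefficient given by Theorem \ref{thm:jpieri}. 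On the right, $c^\eta_{\zeta,\epsilon}$ is again column Pieri; the sole unknown term is $c^\la_{\mu,\nu}$ (at $\eta=\nu$), and for $\eta\neq\nu$ the partition $\eta$ necessarily has three parts, so a further application of Lemma \ref{lem:3to2} converts $c^\la_{\mu,\eta}$ into a case-2 coefficient of strictly smaller weight, again handled by induction.

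Solving the resulting equation for $c^\la_{\mu,\nu}$ produces a closed expression, and to show it agrees with $\dlmn$ from Table \ref{tab:div} one substitutes the candidate formulas and verifies the corresponding identity \ref{eq:did}. Following the recipe preceding Proposition \ref{prop:horn1}, each summand $d^{\kappa}_{\mu,\zeta}\,d^\la_{\kappa,\epsilon}$ and $d^{\eta}_{\zeta,\epsilon}\,d^\la_{\mu,\eta}$ is rewritten as $\dlmn\cdot d^\nu_{\zeta,\epsilon}$ times ratios produced by equations \ref{eq:mod1}--\ref{eq:mod2} as the anchors and division numbers shift. Since $\epsilon$ is a single column, these ratios assemble on each side into a sum of the shape $\Phi(x;\sigma,\tau)$ of Lemma \ref{lem:col}, with $\sigma,\tau$ built from shifted anchors $h^{ij}_\la,-h^{ij}_\mu,-h^{ij}_\nu$ dictated by case 2. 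The symmetry $\Phi(x;\sigma,\tau)\equiv\Phi(x;\tau,\sigma)$ then yields equation \ref{eq:did}, closing the induction.

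The main obstacle will be the bookkeeping needed to match the specific case-2 data of Table \ref{tab:div} to the abstract input of Lemma \ref{lem:col}: one must identify exactly which strips of $\la,\mu,\nu$ change their division numbers under $\nu\mapsto\zeta$ and $\zeta\mapsto\eta$, and verify that the resulting ratios of $[b;n]$-terms collect into the multisets $\sigma,\tau$ demanded by the lemma, with the signs dictated by the upper-versus-lower hook conventions in the anchors $\pm h^{ij}_\xi$. A secondary complication is handling boundary situations in the column Pieri sum (for instance when some putative $\eta$ fails to be a partition, or when $a=0$), which must be tracked so that only terms genuinely contributing to the path survive.
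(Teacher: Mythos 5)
Your proposal is correct and its core is the same as the paper's: the same decomposition $\nu=\ze+\ep$ with $\ep=(1,1)$, the same minimal-path identity from Lemma \ref{lem:path} with column-Pieri evaluations of the $\ep$-coefficients, and the same reduction of the resulting identity \ref{eq:did} to the symmetry $\Phi(x;\si,\tau)\equiv\Phi(x;\tau,\si)$ of Lemma \ref{lem:col}. Where you differ is only in the induction bookkeeping. The paper inducts on $\nu_2-\nu_3$: the base case $\nu_2=\nu_3$ is handled by Proposition \ref{prop:horn1} with the roles of $\mu$ and $\nu$ interchanged (using that Horn inequalities (8) and (15) force $\la_2=\mu_2+\nu_2$ there), and the terms $c^{\la}_{\mu,\eta(i)}$ with $\eta(i)\neq\nu$ are covered directly by the inductive hypothesis because $\eta(i)_2-\eta(i)_3<\nu_2-\nu_3$. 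You instead induct on $|\nu|$, take a one-row $\nu$ as base case via the row Pieri rule (Theorem \ref{thm:jpieri} together with equation \ref{eq:transpose}), and dispose of the $\eta\neq\nu$ terms by stripping $\om^{\eta}_3$ with Lemma \ref{lem:3to2}. That works, but it costs you two small verifications the paper's scheme avoids: first, you must observe that the expression $\mathbf{d}^{\la}_{\mu,\nu}$ is unchanged under the Lemma \ref{lem:3to2} reduction (the anchors $h^{ij}_\xi$ depend only on differences of parts, which are preserved, and the removed strips carry division number $0$), so that the formula obtained for the reduced triple really is the unreduced $d^{\la}_{\mu,\eta(i)}$ needed in equation \ref{eq:did}; second, your base case still requires a hook-exchange computation to match the row-Pieri answer to the case-2 division numbers of Table \ref{tab:div}, analogous to the exchange at the end of the proof of Proposition \ref{prop:horn1}. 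Neither point is a gap, and the remaining computational heart --- identifying the shifted anchors and division numbers with suitable $\si$ and $\tau$ so that the two sides of the path become $\Phi(1-r;\si,\tau)$ and $\Phi(1-r;\tau,\si)$ --- is exactly the table of factors that the paper writes out explicitly.
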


\begin{proof} Let $\dlmn$ by the hypothesized formula. We use induction on $(\nu_2-\nu_3)$ to show that $\clmn=\dlmn$. If $\nu_2=\nu_3$, we can determine $\clmn$ using Prop. \ref{prop:horn1} with the roles of $\mu$ and $\nu$ reversed in the following way. First, note that by Horn inequality (8),  $\la_2 \leq \mu_1+\nu_2=\mu_2+\nu_2$, and by inequality (15), $\la_2 \geq \mu_2+\nu_3 = \mu_2+\nu_2$, so $\la_2=\mu_2+\nu_2$. Therefore, $\fd_{222}=0$, and so $\fd_{213}=\fd_{222}=0$. Since $|\la|=|\mu|+|\nu|$, it also follows that $\fd_{111}=\fd_{333}$. Finally, note that since $\la_2-\mu_3-\nu_3=\mu_1-\mu_3$ in this case, we can exchange the lower hooks in the second strip of $\om^\la_3$ with the upper hooks in the first strip of $\om^\mu_2$, so that Prop. \ref{prop:horn1} gives us the following division numbers:
$$\la: \begin{bmatrix}
\fd_{111} & 0 & 0 \\
0 & 0 & \\
\fd_{111} &&
\end{bmatrix} \qquad
\mu:\begin{bmatrix}
0 & 0\\
\fd_{333} & 
\end{bmatrix} \qquad
\nu: \begin{bmatrix}
0 & \fd_{111} \\
0 & 
\end{bmatrix}$$ 
We can verify that this is the same as $\dlmn$ in this case.

If $\nu_2 > \nu_3$ then we can decompose $\nu$ into $\ep=(1,1)$ and $\ze=\nu-\ep$. Then by Lemma \ref{lem:path}, we have 
\begin{equation}
\sum_{1\leq i \leq 3} c^{\la}_{\mu,\eta(i)} c^{\eta(i)}_{\ze,\ep} = \sum_{1\leq i \leq 3} c^{\ka(i)}_{\mu,\ze} c^{\la}_{\ka(i),\ep},
\label{eq:mu12path}
\end{equation}
where $\eta(i) = \ze + (1,1,1) - \mathbf{e}_i$ and $\ka(i)=\la-(1,1,1)+\mathbf{e}_i$, where $\mathbf{e}_i$ is a triple consisting of a $1$ in the $i^{\mbox{\tiny th}}$ position and $0$'s elsewhere. Note that $$\eta(3)=\nu.$$

We will show that our hypothesized coefficients satisfy equation \ref{eq:mu12path}. We can determine $d^{\la}_{\ka(i),\ep}$ and $d^{\eta(i)}_{\ze,\ep}$ by the Pieri rule, since $\ep$ consists of a single column. In particular, we have that
\begin{align*}
d^{\la}_{\ka(i),\ep} &=\phi\left(1-r;\set{h^{ij}_{\la}+1,-h^{ij}_{\ka(i)}+1:j>i}\right),\\
d^{\eta(i)}_{\ze,\ep} &=\phi\left(1-r;\set{h^{ij}_{\eta(i)}+1,-h^{ij}_{\ze}+1:j>i}\right).
\end{align*}
Note that by this definition, $d^{\nu}_{\ze,\ep}=d^{\eta(3)}_{\ze,\ep}=1$.

We can also write out $d^{\ka(i)}_{\mu,\ze}$ and $d^{\la}_{\mu,\eta(i)}$, by comparing them to $\dlmn$, since $\ka(i)$ is obtained by modifying the parts of $\la$ and $\eta(i)$ and $\ze$ are obtained by modifying the parts of $\nu$. Therefore, $d^{\ka(i)}_{\mu,\ze}$ and $d^{\la}_{\mu,\eta(i)}$ can be determined by examining how these changes to $\la$ and $\nu$ change the anchors and division numbers for each strip.

In the symbols below, the entries denote how the corresponding anchor for each strip must be changed for that coefficient compared to the anchor of $\dlmn$, and a $^*$ indicates a change of $-1$ to the corresponding division number. 

\begin{alignat*}{2}
\ka(1): \begin{pmatrix}
+1^* & +1^* & 0 \\
0 & 0^* & \\
0^* &
\end{pmatrix} \qquad
&\mu:\begin{pmatrix}
0 & 0\\
0^* & 
\end{pmatrix} \qquad
&\ze: \begin{pmatrix}
+1^* & 0^* \\
+1^* & 
\end{pmatrix}, \\
\ka(2): \begin{pmatrix}
0 & -1 & 0 \\
+1^* & 0 & \\
0 &
\end{pmatrix} \qquad
&\mu:\begin{pmatrix}
0 & 0\\
0^* & 
\end{pmatrix} \qquad
&\ze: \begin{pmatrix}
+1 & 0 \\
+1 &  
\end{pmatrix}, \\
\ka(3): \begin{pmatrix}
-1 & 0^* & 0 \\
-1 & 0^* & \\
0 &
\end{pmatrix} \qquad
&\mu:\begin{pmatrix}
0 & 0\\
0 & 
\end{pmatrix} \qquad
&\ze: \begin{pmatrix}
+1^* & 0 \\
+1^* & 
\end{pmatrix}, \\
\la: \begin{pmatrix}
0^* & 0^* & 0 \\
0 & 0^* & \\
0^* &&
\end{pmatrix} \qquad
&\mu:\begin{pmatrix}
0 & 0\\
0^* & 
\end{pmatrix} \qquad
&\eta(1): \begin{pmatrix}
+2^* & +1^* \\
+1^* & 
\end{pmatrix}, \\
\la: \begin{pmatrix}
0 & 0^* & 0 \\
0^* & 0^* & \\
0 &&
\end{pmatrix} \qquad
&\mu:\begin{pmatrix}
0 & 0\\
0^* & 
\end{pmatrix} \qquad
&\eta(2): \begin{pmatrix}
+1^* & -1 \\
+2^* & 
\end{pmatrix}.
\end{alignat*}

This allows us to determine each summand $d^{\ka(i)}_{\mu,\ze} d^{\la}_{\ka(i),\ep}$ and $d^{\la}_{\mu,\eta(i)} d^{\eta(i)}_{\ze,\ep}$ of equation \ref{eq:mu12path} compared to $\dlmn$, since we can use equations \ref{eq:mod1} and \ref{eq:mod2} to write terms of the form $[b+r;n+s]$ as a product of $[b;n]$ and some additional factors.

In particular, we factor out $\dlmn$ from each of these terms. In addition, we factor out terms that appear in a majority of the six summands. Note that these terms come from the blocks $\om^\xi_2$ for each partition $\xi$. Thus, we factor out $\frac{1}{\mathcal{X}}$ from each expression, where
$$\mathcal{X}=\phi\left(1-r; {\la_{12}+r+\fd_{213},\fd_{213},-\mu_{23}-r+\fd_{333},1-\nu_{13}-2r,1-\nu_{23}-r}\right).$$
Using the notation of Lemma \ref{lem:col}, we can rewrite $\mathcal{X}$ as $\phi_3(1-r;\si,\tau)$, where
\begin{alignat*}{3}
\tau_1 &= \la_{12}+\fd_{213}+r &&= \la_1 - \mu_1 - \nu_3 + r \\
\tau_2 &= \fd_{213} &&= \la_2 - \mu_1 - \nu_3  \\
\tau_3 &= \fd_{333}-\mu_{23}-r &&= \la_3 - \mu_1 - \nu_3 - r \\
\si_1 &= 1-\nu_{13}-2r \\
\si_2 &= 1-\nu_{23}-r \\
\si_3 &= 0.
\end{alignat*}

This allows us to write each term $d^{\ka(i)}_{\mu,\ze} d^{\la}_{\ka(i),\ep}$ and $d^{\la}_{\mu,\eta(i)} d^{\eta(i)}_{\ze,\ep}$ as a product of the form $$\frac{\dlmn}{\mathcal{X}} \phi(1-r;\set{a_1,\ldots,a_n}).$$ In the table below, we present the elements of the set $\mathcal{A}$ corresponding to each term. We will use $\ka(i)$ to indicate terms corresponding to $d^{\ka(i)}_{\mu,\ze} d^{\la}_{\ka(i),\ep}$ and $\eta(i)$ to indicate terms corresponding $d^{\la}_{\mu,\eta(i)} d^{\eta(i)}_{\ze,\ep}$.

\begin{center}
\begin{tabular}{|c|c|c|c|c|c|}
\hline
& $a_1$ & $a_2$ & $a_3$ & $a_4$ & $a_5$\\
\hline
$\ka(1)$  &  $\la_{12}+\fd_{213}+r$&$1-\fd_{111}-r$&$-\la_{13}-2r$&$-\la_{12}-r$&$1+\nu_{12}-\fd_{111}$ \\
$\ka(2)$  &  $\fd_{213}$&$-\la_{23}-r$&$\la_{12}+r$&$\fd_{213}+1-\nu_{13}-2r$&$1-\nu_{23}-r$ \\
$\ka(3)$  & $\la_{13}+2r$&$\la_{23}+r$&$\fd_{333}-\mu_{23}-r$&$1-\la_{13}-\fd_{111}-3r$&$1-\la_{23}-\fd_{222}-2r$ \\
$\eta(1)$  & $1-\la_{13}-\fd_{111}-3r$&$1-\fd_{111}-r$&$\nu_{13}+2r-1$&$\nu_{12}+r$&$\fd_{213}+1-\nu_{13}-2r$\\
$\eta(2)$  &  $1-\la_{23}-\fd_{222}-2r$&$1-\fd_{111}+\nu_{12}$&$\nu_{23}+r-1$&$-\nu_{12}-r$&$\fd_{213}+1-\nu_{23}-r$ \\
\hline
\end{tabular}
\end{center}

Using the $\si_i$ and $\tau_i$ defined above, and once again using the notation of Lemma \ref{lem:col}, one can check that
\begin{align*}
d^{\ka(i)}_{\mu,\ze} d^{\la}_{\ka(i),\ep} &= \frac{\dlmn}{\mathcal{X}} {\phi_i(1-r;\tau,\si)}, \\
d^{\la}_{\mu,\eta(i)} d^{\eta(i)}_{\ze,\ep} &= \frac{\dlmn}{\mathcal{X}} {\phi_i(1-r;\si,\tau)}.
\end{align*}
Therefore, we have that
\begin{align*}
\sum^{3}_{i=1} d^{\ka(i)}_{\mu,\ze} c^{\la}_{\ka(i),\ep} &= \frac{\dlmn}{\mathcal{X}} {\Phi(1-r;\tau,\si)}, \\
\sum^{3}_{i=1} d^{\la}_{\mu,\eta(i)} c^{\eta(i)}_{\ze,\ep} &= \frac{\dlmn}{\mathcal{X}} {\Phi(1-r;\si,\tau)}.
\end{align*}
and so by Lemma \ref{lem:col}, we have that both sums are equal, showing that $\dlmn$ satisfies equation \ref{eq:mu12path}. 

By the inductive hypothesis, each $c^{\ka(i)}_{\mu,\ze}=d^{\ka(i)}_{\mu,\ze}$, since $\mu$ is unchanged and $\ze_2-\ze_3=\nu_2-\nu_3-1$. Similarly, $c^{\la}_{\mu,\eta(i)}=d^{\la}_{\mu,\eta(i)}$ for $i=1,2$, since $\eta(1)_2 - \eta(1)_3 = \nu_2 - \nu_3 - 1$ and $\eta(2)_2 - \eta(2)_3 = \nu_2 - \nu_3 - 2$. Therefore, we get that the remaining term $c^{\la}_{\mu,\eta(i)} = \clmn$ must equal $\dlmn$.

\end{proof}

These last two propositions are instrumental in proving all of the remaining cases, since they allow us to form minimal paths by decomposing $\nu$ into two pieces $\ze$ and $\ep$ such that $\ep$ consists of a single part, and $\ze$ is such that either $\ze_2=\ze_3$ or $\ze_1=\ze_2$. Then we can determine coefficients involving $\ep$ using the Pieri rule for rows, and coefficients involving $\ze$ using either Proposition \ref{prop:horn1} or \ref{prop:horn2}.

In particular, Prop \ref{prop:horn1} allows us to form a minimal path by decomposing $\nu$ into $\ep=(\nu_2-\nu_3)$ and $\ze=(\nu_1,\nu_3,\nu_3)$.
$$\young(~~~~~~~,~~\bullet\bullet,~~) = \yng(7,2,2) + \young(\bullet\bullet)$$
Prop \ref{prop:horn2} allows us to form a minimal path by decomposing $\nu$ into $\ep=\om^\nu_1$ and $\ze=\nu-\ep=\om^\nu_3+\om^\nu_2$.
$$\young(~~~~\bullet\bullet\bullet,~~~~,~~) = \yng(4,4,2) + \young(\bullet\bullet\bullet)$$

\begin{prop} If $\la_3=\mu_2+\nu_2,$ then $\clmn$ is given by the following division numbers:
$$\la:\begin{bmatrix}
\fd_{333} & \fd_{222} & 0 \\
\fd_{333} & \fd_{222} & \\
0 &&
\end{bmatrix} \qquad
\mu:\begin{bmatrix}
\fd_{332} & \fd_{223}\\
0 & 
\end{bmatrix} \qquad
\nu:\begin{bmatrix}
\fd_{323} & \fd_{232} \\
0 & 
\end{bmatrix}.$$
\label{prop:la3}
\end{prop}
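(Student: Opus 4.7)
The hypothesis $\la_3 = \mu_2 + \nu_2$ corresponds to facet $11$ of $\HC_3$; it is symmetric in $\mu$ and $\nu$, and the proposed division-number tableau respects this symmetry. I would follow the template of Prop \ref{prop:horn2}: induct on one parameter, apply Lemma \ref{lem:path} with a decomposition $\nu = \ze + \ep$ in which $\ep$ is a single row, use the Pieri rule for the coefficients involving $\ep$, use a previous proposition for the coefficients $c^{\ka}_{\mu,\ze}$, and finally reduce the resulting minimal-path identity to an instance of one of the main lemmas.

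Concretely, I would induct on $\nu_1 - \nu_2$. In the base case $\nu_1 = \nu_2$, combining the hypothesis with Horn inequality $(9)$ and with $\la_2 \geq \la_3$ forces $\la_2 = \mu_2 + \nu_1$, so the triple also lies on facet $9$ and the coefficient is already determined by Prop \ref{prop:nu1}; I would check that the two division-number tableaux agree as polynomials on this intersection, where the equality $\fd_{222} = 0$ allows one to shift flipped hooks between strips to reconcile them. For the inductive step, set $\ep = (\nu_1 - \nu_2)$ and $\ze = (\nu_2, \nu_2, \nu_3)$; since $\ze_1 = \ze_2$, the coefficients $c^{\ka}_{\mu,\ze}$ can be computed via Prop \ref{prop:horn2} applied to the symmetric coefficient $c^{\ka}_{\ze,\mu}$. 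The Pieri-rule coefficients $c^{\la}_{\ka,\ep}$ and $c^{\eta}_{\ze,\ep}$ are read off from Theorem \ref{thm:jpieri} together with equation \ref{eq:transpose}, and any remaining $c^{\la}_{\mu,\eta}$ with $\eta \neq \nu$ follows by induction once one checks that $\eta_1 - \eta_2 < \nu_1 - \nu_2$.

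I would then verify the resulting identity \ref{eq:did} on the $\dlmn$ side by extracting $\dlmn$ as a common factor from every summand, applying \ref{eq:mod1} and \ref{eq:mod2} to convert each anchor shift and division-number decrement into a ratio of $\phi$-brackets. After pulling out a further common tail $\mathcal{X}$, the two sides of \ref{eq:did} should take the form $(\dlmn/\mathcal{X})\cdot \Phi_n(1-r;\si,\tau)$ and $(\dlmn/\mathcal{X})\cdot\Phi_n(1-r;\tau,\si)$ for appropriate $\si,\tau$ and $n = \nu_1 - \nu_2$, at which point Lemma \ref{lem:row} closes the argument. The main obstacle will be precisely this bookkeeping: choosing the correct $\mathcal{X}$ and parametrization $\si,\tau$ so that the intermediate $\phi$-products match the shape of $\phi^n_t$ in Lemma \ref{lem:row}. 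This is particularly delicate here because the bottom strips of $\om^{\la}_3$, $\om^{\mu}_2$, and $\om^{\nu}_2$ all carry division number $0$ and thus contribute no factors to $\dlmn$, while the horizontal-strip shift from $\la$ to $\ka$ (and from $\nu$ to $\eta$) can introduce new flipped boxes in exactly those strips, requiring careful treatment of boundary terms before the two sides will fit into the $\Phi_n$ shape.
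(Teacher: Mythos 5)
Your induction framework is set up sensibly (the base case via Horn inequality (9) is correct, and the other $\eta$'s appearing in your path do satisfy $\la_3=\mu_2+\eta_2$ with strictly smaller $\eta_1-\eta_2$), but the step where the entire proof lives --- recasting both sides of equation \ref{eq:did} as $\Phi_n(1-r;\si,\tau)$ and $\Phi_n(1-r;\tau,\si)$ with $n=\nu_1-\nu_2$ --- is asserted rather than established, and as stated the parameter cannot be right. With $\ze=(\nu_2,\nu_2,\nu_3)$ and $\ep=(\nu_1-\nu_2)$, interlacing pins $\eta_2=\nu_2$, so the $\eta$-side has at most $\min(\nu_1-\nu_2,\nu_2-\nu_3)+1$ summands, and the Horn inequalities prune further; likewise on the $\ka$-side the horizontal-strip condition together with Horn inequality (9) for $(\ka,\mu,\ze)$ pins $\ka_2=\la_3$, leaving a one-parameter family whose length is again not $\nu_1-\nu_2+1$. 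For instance, for $\la=(12,8,5)$, $\mu=(10,2,1)$, $\nu=(9,3,0)$ (which lies on facet 11 with all other inequalities strict) each side of your path has exactly three nonvanishing terms, while $\nu_1-\nu_2=6$; a sum of three products cannot match $\Phi_6$, whose two sides each consist of seven generically nonzero terms. So the reduction to Lemma \ref{lem:row} would at best need a different, case-dependent parameter and an explicit identification of the surviving $\ka$'s and $\eta$'s --- and that bookkeeping, which you defer, is precisely the content of the proof.

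The paper avoids all of this by choosing the other decomposition: $\ep=(\nu_2-\nu_3)$ (a single row) and $\ze=(\nu_1,\nu_3,\nu_3)$. Then the facet equality itself, i.e.\ Horn inequality (11), collapses the path to a single term on each side: any $\eta\neq\nu$ with $c^{\eta}_{\ze,\ep}\neq 0$ has $\eta_2<\nu_2$, hence $\eta_2+\mu_2<\la_3$ and $c^{\la}_{\mu,\eta}=0$, while any $\ka$ with $\ka_3>\mu_2+\nu_3=\mu_2+\ze_2$ has $c^{\ka}_{\mu,\ze}=0$, leaving only $\ka=\la-(0,0,\nu_2-\nu_3)$. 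This gives $\clmn\, c^{\nu}_{\ze,\ep}=c^{\ka}_{\mu,\ze}\, c^{\la}_{\ka,\ep}$ with no sum, no induction, and no appeal to Lemma \ref{lem:col} or \ref{lem:row}; the right-hand side is evaluated by Proposition \ref{prop:horn1} (since $\ze_2=\ze_3$) and the Pieri rule, and the hypothesis $\la_3=\mu_2+\nu_2$ enters once more through the cancellation $[-\mu_{23}-r;\fd_{333}-\nu_{23}]\,[0;\fd_{333}-\nu_{23}]=1$, after which the stated division numbers drop out. The lesson is to pick the decomposition so that the defining equality of the facet kills the competing terms, rather than forcing this case into the heavier $\Phi_n$ machinery.
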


\begin{proof} We will first use Lemma \ref{lem:path} to show that $$\clmn c^{\nu}_{\ze,\ep}=c^{\ka}_{\mu,\ze} c^{\la}_{\ka,\ep},$$ where $\ep=(\nu_2-\nu_3),\ze=(\nu_1,\nu_3,\nu_3)$ and $\ka=\la-(0,0,\nu_2-\nu_3).$ For any $\eta$ such that $c^{\eta}_{\ep,\ze} \neq 0$, we have that $\eta_2 \leq \nu_2$, with equality holding only if $\eta=\nu$. By Horn inequality (11), we know that if $\eta_2+\mu_2<\nu_2+\mu_2 =\la_3$, then $c^\la_{\mu,\eta}=0$. On the other hand, if $\ka_3 > \mu_2+\nu_3$ then since $\ze_2=\ze_3=\nu_3$, we have that $\ka_3 >\mu_2+\ze_2$, which would imply that $c^\ka_{\mu,\ze}=0$.  

Using Prop \ref{prop:horn1} and the fact that $\ka=(\la_1,\la_2,\la_3-\nu_{23})$ and $\ze=(\nu_1,\nu_3,\nu_3)$, we have that $c^{\ka}_{\mu,\ze}$ is given by the division numbers:
$$\ka: \begin{bmatrix}
\fd_{333}-\nu_{23} & \fd_{222}+\nu_{23} & 0 \\
\fd_{333}-\nu_{23} & \fd_{222}+\nu_{23} & \\
\fd_{333}-\nu_{23} &&
\end{bmatrix} \qquad 
\mu: \begin{bmatrix}
\fd_{333}-\nu_{23} & \fd_{222}+\nu_{23} \\
\fd_{333}-\nu_{23} & 
\end{bmatrix} \qquad 
\ze: \begin{bmatrix}
0 & \mathfrak{d}_{111}\\
0 & 
\end{bmatrix}$$
where $\fd_{ijk}$ still refers to $|\la_i-\mu_j-\nu_k|$. Therefore,
\begin{align*}
c^{\ka}_{\mu,\ze} = [\la_{13}+&\nu_{23}+2r;\fd_{333}-\nu_{23}][\la_{23}+\nu_{23}+r;\fd_{333}-\nu_{23}][0;\fd_{333}-\nu_{23}] \\
&\cdot [\la_{12}+r;\fd_{222}-\nu_{23}][0;\fd_{222}-\nu_{23}][-\mu_{13}-2r;\fd_{333}-\nu_{23}] \\
&\cdot [-\mu_{23}-r;\fd_{333}-\nu_{23}][-\mu_{12}-r;\fd_{222}-\nu_{23}][-\nu_{13}-r;\fd_{111}].
\end{align*}
Note that since $\la_3=\mu_2+\nu_2$ in this case, we also have that $\fd_{333}-\nu_{23} = \mu_{23}$. This implies that $$[-\mu_{23}-r;\fd_{333}-\nu_{23}][0;\fd_{333}-\nu_{23}]=1,$$ and so these terms can be removed from the product above.

Next, we compute $c^{\la}_{\ka,\ep}$ and $c^{\nu}_{\ze,\ep}$ by the Pieri rule to get that
\begin{align*}
c^{\la}_{\ka,\ep} &= [\la_{13}+2r;\nu_{23}][\la_{23}+r;\nu_{23}][-\la_{13}+\nu_{23}+r;\nu_{23}][-\la_{23}+\nu_{23};\nu_{23}], \\
c^{\nu}_{\ze,\ep} &= [\nu_{12}+r;\nu_{23}][-\nu_{13}-r;\nu_{23}].
\end{align*}

Therefore, we get that 
\begin{align*}
c^{\ka}_{\mu,\ze}c^{\la}_{\ka,\ep} &= [\la_{13}+\nu_{23}+2r;\fd_{333}-\nu_{23}][\la_{23}+\nu_{23}+r;\fd_{333}-\nu_{23}][\la_{12}+r;\fd_{222}-\nu_{23}] \\
&\qquad\qquad \cdot [0;\fd_{222}-\nu_{23}][-\mu_{13}-2r;\fd_{333}-\nu_{23}][-\mu_{12}-r;\fd_{222}-\nu_{23}] \\
&\qquad\qquad \cdot [-\nu_{13}-r;\fd_{111}][\la_{13}+2r;\nu_{23}][\la_{23}+r;\nu_{23}] \\
&\qquad\qquad \cdot [-\la_{13}+\nu_{23}+r;\nu_{23}][-\la_{23}+\nu_{23};\nu_{23}] \\
&= [\la_{13}+2r;\fd_{333}][\la_{23}+r;\fd_{333}][\la_{12}+r;\fd_{222}][0;\fd_{222}][-\mu_{13}-2r;\fd_{332}] \\
&\qquad\qquad \cdot[-\mu_{12}-r;\fd_{223}][-\nu_{13}-r;\fd_{111}],
\end{align*}
and so dividing by $c^{\nu}_{\ze,\ep}$ gives us that
\begin{align*}
\clmn &= [\la_{13}+2r;\fd_{333}][\la_{23}+r;\fd_{333}][\la_{12}+r;\fd_{222}][0;\fd_{222}][-\mu_{13}-2r;\fd_{332}] \\
& \qquad \quad \cdot [-\mu_{12}-r;\fd_{223}][-\nu_{13}-r;\fd_{232}][-\nu_{12};\nu_{23}].
\end{align*}
This expression corresponds to the desired division numbers.
\end{proof}

\begin{prop} If $\la_i=\mu_i+\nu_1,$ then $\clmn=c^{\ka}_{\mu,\ze} c^{\la}_{\ka,\ep}$, where $\ep=\om^\nu_1,\ze=\nu-\ep$ and $\ka=\la - |\ep| \mathbf{e}_i.$
\label{prop:nu1}
\end{prop}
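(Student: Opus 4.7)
The plan is to apply Lemma \ref{lem:path} with the decomposition $\nu = \ze + \ep$, where $\ep = \om^\nu_1$ is a single row of length $\nu_1 - \nu_2$ and $\ze = (\nu_2, \nu_2, \nu_3)$, and then argue that both sums in the resulting identity
$$\sum_{\eta} c^{\la}_{\mu,\eta}\, c^{\eta}_{\ze,\ep} \;=\; \sum_{\ka} c^{\ka}_{\mu,\ze}\, c^{\la}_{\ka,\ep}$$
collapse to a single nonzero term each. In contrast to the cases handled by Lemmas \ref{lem:col} and \ref{lem:row}, no subtle cancellation will be needed here; the proposition will follow directly from Horn's inequalities together with the Pieri rule.

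For the left-hand side, I would reason as follows. Since $\ep$ is a single row, the Pieri rule forces any $\eta$ with $c^{\eta}_{\ze,\ep} \neq 0$ to be obtained from $\ze$ by adding a horizontal $(\nu_1-\nu_2)$-strip, so in particular $\eta_1 \leq \ze_1 + |\ep| = \nu_1$. The reverse inequality comes from the Horn inequality $\la_i \leq \mu_i + \eta_1$ required for $c^{\la}_{\mu,\eta} \neq 0$ (inequality 7, 9, or 12 of Table \ref{tab:horn} according to whether $i=1,2,3$), combined with the hypothesis $\la_i = \mu_i + \nu_1$. Hence $\eta_1 = \nu_1$, which forces the entire horizontal strip into the first row and yields $\eta = \nu$ with $c^{\nu}_{\ze,\ep} = 1$. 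Thus the left-hand side simplifies to $\clmn$.

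For the right-hand side, a mirror-image argument applies. If $c^{\la}_{\ka,\ep} \neq 0$, then $\la/\ka$ is a horizontal $(\nu_1 - \nu_2)$-strip, and if also $c^{\ka}_{\mu,\ze} \neq 0$, the same Horn inequality, now applied to $\ka$ and $\ze_1 = \nu_2$, gives $\ka_i \leq \mu_i + \nu_2$; combined with $\la_i = \mu_i + \nu_1$ this forces $\la_i - \ka_i \geq \nu_1 - \nu_2 = |\ep|$. Since the strip only contains $|\ep|$ boxes in total, equality must hold and the entire strip lies in row $i$, pinning $\ka = \la - |\ep|\mathbf{e}_i$. The identity $\clmn = c^{\ka}_{\mu,\ze}\, c^{\la}_{\ka,\ep}$ then drops out.

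The only wrinkle I foresee is checking that $\ka = \la - |\ep|\mathbf{e}_i$ is actually a partition, i.e.\ that $\la_i - |\ep| \geq \la_{i+1}$ (and $\la_i - |\ep| \geq 0$ when $i = 3$); this should fall out of the remaining Horn inequalities for the minimal triple $(\lmn)$, uniformly across the three cases $i = 1, 2, 3$, and is not expected to require any further machinery.
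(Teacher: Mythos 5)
Your argument is correct and follows essentially the same route as the paper's proof: apply Lemma \ref{lem:path} with $\ep=\om^\nu_1$, $\ze=(\nu_2,\nu_2,\nu_3)$, and collapse both sums using the Pieri constraint together with Horn inequalities (7), (9), (12) applied to the triples $(\la,\mu,\eta)$ and $(\ka,\mu,\ze)$. The extra points you flag --- that $c^\nu_{\ze,\ep}=1$ and that $\ka=\la-|\ep|\mathbf{e}_i$ is a partition (both consequences of the Pieri/triangularity structure and the remaining Horn inequalities) --- are left implicit in the paper and are fine as you handle them.
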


\begin{proof} We apply Lemma \ref{lem:path}. For any $\eta$ such that $c^{\eta}_{\ep,\ze} \neq 0$, we have that $\eta_1 \leq \nu_1$, with equality holding only if $\eta=\nu$. By Horn inequalities (7),(9) and (12), we know that if $\eta_1+\mu_i<\nu_1+\mu_i =\la_i$, then $c^\la_{\mu,\eta}=0$. On the other hand, if $\ka_i > \la_i - |\ep|$, then $\ze_1+\mu_i = \nu_1 - |\ep| +\mu_i =\la_i - |\ep| <\ka_i$, which would imply that $c^\ka_{\mu,\ze}=0$.  

\end{proof}

\begin{prop} If $\la_i=\mu_i+\nu_3,$ then $\clmn=c^{\ka}_{\mu,\ze} c^{\la}_{\ka,\ep}$, where $\ep=\om^\nu_1,\ze=\nu-\ep$ and $\ka=\mu+(\nu^3_3)+(\nu^3_2)-\nu_2 \mathbf{e}_i.$
\label{prop:nu3}
\end{prop}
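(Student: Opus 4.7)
The plan is to apply Lemma \ref{lem:path} with $\ep=\om^\nu_1$, which is the single row of length $\nu_1-\nu_2$, so that $\ze=\nu-\ep=(\nu_2,\nu_2,\nu_3)$. This yields the identity
\[
\sum_\eta c^\eta_{\ze,\ep}\,c^\la_{\mu,\eta} \;=\; \sum_{\ka'} c^{\ka'}_{\mu,\ze}\,c^\la_{\ka',\ep},
\]
and I will argue that a single term survives on each side. This is the direct analogue of the argument in Proposition \ref{prop:nu1}, with the Horn inequalities $(13)$, $(15)$, $(18)$ (for $i=3,2,1$ respectively), all of which involve $\nu_3$, playing the role that $(12)$, $(9)$, $(7)$ did there.

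On the left-hand side, since $\ep$ is a single row, the Pieri rule forces $\eta/\ze$ to be a horizontal strip, so $\eta_2=\nu_2$ and $\eta_3\geq\nu_3$. The Horn inequality associated to $i$, applied to $(\la,\mu,\eta)$, then gives $\eta_3\leq\la_i-\mu_i=\nu_3$, so $\eta_3=\nu_3$; together with $|\eta|=|\nu|$ this forces $\eta=\nu$. Since $c^\nu_{\ze,\ep}=1$ by Pieri, the left-hand side collapses to $\clmn$.

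On the right-hand side, the horizontal-strip condition on $\la/\ka'$ gives $\ka'_j\leq\la_j$, while the same Horn inequality applied to $(\ka',\mu,\ze)$ gives $\ka'_i\geq\mu_i+\nu_3=\la_i$, hence $\ka'_i=\la_i$. The remaining two components of $\ka'$ are then pinned down by combining the size identity $|\ka'|=|\la|-(\nu_1-\nu_2)$, the horizontal-strip bounds $\la_{j+1}\leq\ka'_j\leq\la_j$, and the remaining Horn inequalities for $(\ka',\mu,\ze)$. For each $i\in\{1,2,3\}$ the resulting upper and lower bounds should close up, isolating a unique $\ka'=\ka$, and the identity of Lemma \ref{lem:path} then reduces to $\clmn=c^\ka_{\mu,\ze}\,c^\la_{\ka,\ep}$.

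The main obstacle is this last uniqueness step. In Proposition \ref{prop:nu1} the condition $\ka_i\leq \la_i-|\ep|$ coming from a single Horn inequality, combined with the horizontal-strip bounds $\ka_j\leq\la_j$ and the total size $\sum_j(\la_j-\ka_j)=|\ep|$, immediately forces $\ka=\la-|\ep|\mathbf{e}_i$. Here only $\ka'_i=\la_i$ is forced directly, leaving two free coordinates, so one must verify case by case that an additional Horn inequality (of the form (7)/(9)/(11)/(12) depending on $i$) closes the remaining gap and singles out the $\ka$ named in the statement.
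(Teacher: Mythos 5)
Your route is the paper's route: the same split $\ep=\om^\nu_1$, $\ze=(\nu_2,\nu_2,\nu_3)$, Lemma \ref{lem:path}, the collapse of the $\eta$-side to $\eta=\nu$ via Horn inequalities (13), (15), (18) (together with $c^{\nu}_{\ze,\ep}=1$, which holds because $\nu=\ze+\ep$ componentwise, so this is the leading, dominance-maximal coefficient), and the deduction $\ka_i=\la_i$ on the other side. The step you call the main obstacle is precisely the one the paper also leaves unsaid, and it closes in one line, uniformly in $i$, with no case analysis: for $c^{\ka'}_{\mu,\ze}\neq 0$ the Horn inequalities for the triple $(\ka',\mu,\ze)$ give $\mu_j+\nu_3=\mu_j+\ze_3\leq \ka'_j\leq \mu_j+\ze_1=\mu_j+\nu_2$ for \emph{every} $j$ (the upper bounds are (7), (9), (12), essentially the inequalities you guessed); since $\ka'_i-\mu_i=\la_i-\mu_i=\nu_3$ while $\sum_j(\ka'_j-\mu_j)=|\ze|=2\nu_2+\nu_3$, the two remaining differences sum to $2\nu_2$ and are each at most $\nu_2$, hence both equal $\nu_2$. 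So the surviving term is unique, with $\ka_i=\mu_i+\nu_3=\la_i$ and $\ka_j=\mu_j+\nu_2$ for $j\neq i$; note this is what the statement's formula for $\ka$ must mean --- read literally it has weight $|\mu|+2\nu_2+3\nu_3\neq |\mu|+|\ze|$, so the $(\nu_3^3)$ contribution should not appear in the $j\neq i$ entries. With that computation inserted in place of ``should close up,'' all other terms in Lemma \ref{lem:path} vanish and the identity reduces to $\clmn=c^{\ka}_{\mu,\ze}\,c^{\la}_{\ka,\ep}$, matching the paper's proof.
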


\begin{proof} We apply Lemma \ref{lem:path}. For any $\eta$ such that $c^{\eta}_{\ep,\ze} \neq 0$, we have that $\eta_3 \geq \nu_3$, with equality holding only if $\eta=\nu$. By Horn inequalities (13), (15) and (18), we know that if $\eta_3+\mu_i>\nu_3+\mu_i =\la_i$, then $c^\la_{\mu,\eta}=0$. On the other hand, if $\ka_i < \mu_i+\nu_3,$ then $\ka_i<\mu_i+\ze_3$, which would imply that $c^\ka_{\mu,\ze}=0$.  

\end{proof}

Note that Propositions \ref{prop:nu1} and \ref{prop:nu3} completely determine the coefficients in those cases, since $c^{\la}_{\ka,\ep}$ can be determined by Propositon \ref{prop:horn1} and $c^{\ka}_{\mu,\ze}$ can be determined by Propositon \ref{prop:horn2}. The details are similar to the proof of Proposition \ref{prop:la3}.

The remaining cases all make use of Lemma \ref{lem:row}. 

\begin{prop} If $\la_3=\la_2,$ then $\clmn$ is given by
$$\la: \begin{bmatrix}
\fd_{111} & 0 & 0 \\
\fd_{333} & 0 & \\
\fd_{223} &&
\end{bmatrix} \qquad
\mu:\begin{bmatrix}
\fd_{232} & \fd_{223}\\
\fd_{222} & 
\end{bmatrix} \qquad
\nu:\begin{bmatrix}
\fd_{223} & \fd_{232} \\
\fd_{223} & 
\end{bmatrix}$$
\label{prop:la23}
\end{prop}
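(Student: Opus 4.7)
The approach mirrors the proofs of Propositions \ref{prop:horn1} and \ref{prop:horn2}: form a minimal path via Lemma \ref{lem:path} and verify that the hypothesized $\dlmn$ satisfies the resulting identity (equation \ref{eq:did}), so that $\clmn=\dlmn$. Following the template noted after Proposition \ref{prop:horn2}, I would decompose $\nu=\ze+\ep$ with $\ep=(\nu_2-\nu_3)$ a single row and $\ze=(\nu_1,\nu_3,\nu_3)$, so that $\ze_2=\ze_3$. Lemma \ref{lem:path} then yields
\begin{equation*}
\sum_\ka c^\ka_{\mu,\ze}\,c^\la_{\ka,\ep} \;=\; \sum_\eta c^\la_{\mu,\eta}\,c^\eta_{\ze,\ep},
\end{equation*}
where $\ka$ runs over partitions with $\la/\ka$ a horizontal $(\nu_2-\nu_3)$-strip and $\eta$ over partitions with $\eta/\ze$ a horizontal $(\nu_2-\nu_3)$-strip. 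Because $\la_2=\la_3$ forces $\ka_2=\la_2$ and $\ze_2=\ze_3$ forces $\eta_3=\ze_3=\nu_3$, each sum is indexed by a single parameter of size $\nu_2-\nu_3+1$.

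The Pieri factors $c^\la_{\ka,\ep}$ and $c^\eta_{\ze,\ep}$ are computed by the row analog of Theorem \ref{thm:jpieri} (via equation \ref{eq:transpose}); every $c^\ka_{\mu,\ze}$ is given in closed form by Proposition \ref{prop:horn1} applied with $\mu$ and $\nu$ interchanged, since $c^\ka_{\mu,\ze}=c^\ka_{\ze,\mu}$ and $\ze_3=\ze_2$. On the right-hand side, the term $\eta=\nu$ contributes $\clmn\cdot c^\nu_{\ze,\ep}$; every other admissible $\eta$ has the form $(\nu_1+s,\nu_2-s,\nu_3)$ with $s\ge 1$, so the triple $(\la,\mu,\eta)$ still sits on the facet $\la_2=\la_3$ but with $\eta_2-\eta_3<\nu_2-\nu_3$. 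An induction on $\nu_2-\nu_3$ (base case $\nu_2=\nu_3$, in which Lemma \ref{lem:3to2} reduces to Proposition \ref{prop:horn1}) therefore supplies a closed form for every $c^\la_{\mu,\eta}$ with $\eta\neq\nu$.

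Substituting every coefficient by its hypothesized $\mathbf{d}$-product, factoring out $\dlmn\cdot d^\nu_{\ze,\ep}$, and applying equations \ref{eq:mod1}--\ref{eq:mod2} together with \ref{eq:phid1}--\ref{eq:phid2}, each summand should reduce to $1/\mathcal{X}$ times a $\phi$-polynomial built from the anchor shifts produced by $\la\mapsto\ka$ or $\nu\mapsto\eta$. With the correct choice of common factor $\mathcal{X}$ and of length-two vectors $\si,\tau$ assembled from the hook-length differences $h^{ij}_\xi$, the two sides should fall exactly into the template of Lemma \ref{lem:row} with $n=\nu_2-\nu_3$: the left side becomes $\dlmn\,\Phi_n(1-r;\tau,\si)/\mathcal{X}$ and the right side $\dlmn\,\Phi_n(1-r;\si,\tau)/\mathcal{X}$, and Lemma \ref{lem:row} finishes the verification.

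The main obstacle is the bookkeeping required to produce the correct $\mathcal{X},\si,\tau$: one must track precisely which anchors shift and which division numbers decrement as $\la$ is replaced by each $\ka$ and $\nu$ by each $\eta$ (the tabular analog of the $\pm$-marked symbols in the proof of Proposition \ref{prop:horn2}), and isolate the $s$- and $t$-independent factors. This is the first case in the classification in which Lemma \ref{lem:row} is needed at a length $n>1$, so unlike Proposition \ref{prop:horn2} one cannot reduce to Lemma \ref{lem:col}; the binomial-style sum over strip positions is exactly what Lemma \ref{lem:row} was designed to absorb, so no new identity is required, but checking that the residual sums line up with $\Phi_n(1-r;\si,\tau)$ for the specific $\si,\tau$ forced by the division numbers in Table \ref{tab:div} is the computationally heaviest step.
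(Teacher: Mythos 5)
Your plan is the paper's own strategy: decompose $\nu$ into the single row $\ep=(\nu_2-\nu_3)$ and $\ze=(\nu_1,\nu_3,\nu_3)$, apply Lemma \ref{lem:path}, compute the auxiliary coefficients via the Pieri rule and Proposition \ref{prop:horn1}, treat $c^{\la}_{\mu,\eta}$ for $\eta\neq\nu$ by induction, and reduce the resulting identity to Lemma \ref{lem:row}. The deviations are minor but one of them is a concrete error in the bookkeeping you defer. The summation/induction parameter is $k=\fd_{222}=\mu_2+\nu_2-\la_2$, not $\nu_2-\nu_3$: for $\eta(t)=(\nu_1+t,\nu_2-t,\nu_3)$, Horn inequality (11) together with $\la_3=\la_2$ forces $c^{\la}_{\mu,\eta(t)}=0$ once $t>k$, so both sums have $k+1$ nonvanishing terms and Lemma \ref{lem:row} must be applied with $n=k$ (by inequality (15) one has $k\le\nu_2-\nu_3$, so your count of $\nu_2-\nu_3+1$ terms overshoots except on the boundary, and matching against $\Phi_{\nu_2-\nu_3}$ would fail because the extra $\phi^n_t$ factors are not zero). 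Correspondingly, the paper inducts on $k$, with base case $k=0$ reducing to Proposition \ref{prop:la3}; your induction on $\nu_2-\nu_3$ with base case $\nu_2=\nu_3$ handled by Proposition \ref{prop:horn1} is also viable, but that base case still requires verifying that the division numbers so obtained can be converted, by exchanging hooks between strips, into the formula claimed here --- the analogue of the hook-swap check the paper performs at $k=0$.

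A second, harmless difference: the paper orients the left side of the path the other way, writing $\sum_{t=0}^{k} c^{\ka(t)}_{\mu,\ep}\,c^{\la}_{\ka(t),\ze}$ with $\ka(t)=(\mu_1+k-t,\la_2-\nu_3,\mu_3+t)$, so the Pieri rule is applied to $c^{\ka(t)}_{\mu,\ep}$ and Proposition \ref{prop:horn1} to $c^{\la}_{\ka(t),\ze}$, whereas you add $\ze$ to $\mu$ first; both orderings give valid instances of Lemma \ref{lem:path} with the same right-hand side, but the anchors and hence the explicit $\si,\tau$ differ, and only the paper's orientation has been checked to land in the template of Lemma \ref{lem:row}. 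Finally, the step you explicitly leave undone --- extracting the common factor and exhibiting the residual summands as $\phi^k_t(x;\si,\tau)$ and $\phi^k_t(x;\tau,\si)$ --- is the actual content of the paper's proof, which factors out $d^{\la}_{\mu,\nu}d^{\nu}_{\ze,\ep}/\phi^k_0(x;\si,\tau)$ and finds $\si=(0,\fd_{321}+r)$, $\tau=(-\fd_{331}-r,-\fd_{311}-3r)$; as written, your argument is a correct outline of that verification rather than the verification itself.
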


\begin{proof} Let $\dlmn$ be the expression given by the proposition. We use induction on $k=\fd_{222}=\nu_2+\mu_2-\la_2$ to show that $\clmn=\dlmn$. If $k=0$, then $\nu_2+\mu_2=\la_2=\la_3$, and so $\clmn$ can be obtained from Prop. \ref{prop:la3} as follows. We observe that the fact that $\la_2=\la_3$ implies that $\fd_{2jk}=\fd_{3jk}$ for all $j,k$, and the fact that $\fd_{222}=0$ implies that $\fd_{111}=\fd_{333}$ and $\fd_{223}=\nu_{23}$. Thus, Prop \ref{prop:la3} gives us
$$\la:\begin{bmatrix}
\fd_{111} & 0 & 0 \\
\fd_{333} & 0 & \\
0 &&
\end{bmatrix} \qquad
\mu:\begin{bmatrix}
\fd_{232} & \fd_{223}\\
0 & 
\end{bmatrix} \qquad
\nu:\begin{bmatrix}
\fd_{223} & \fd_{232} \\
0 & 
\end{bmatrix}.$$
We can verify that this is the same as the desired formula, by noting that since $\fd_{223}=\nu_{23}$, we can swap all the lower hooks in the third strip of $\om^{\la}_3$ with all the upper hooks in the second strip of $\om^{\nu}_2$.

For $k>0$, let 
\begin{align*}
\ep&=(\nu_2-\nu_3),\\
\ze&=(\nu_1,\nu_3,\nu_3),
\end{align*}
and 
\begin{align*}
\eta(t)&= \nu-(-t,t,), \\
\ka(t)&=(\mu_1+k-t,\la_2-\nu_3,\mu_3+t).
\end{align*}
Note that $\eta(0)=\nu.$ 

By Lemma \ref{lem:path}, we have that 
\begin{equation}
\sum^k_{t=0} c^{\ka(t)}_{\mu,\ep} \cdot c^{\la}_{\ka(t),\ze} = \sum^k_{t=0} c^{\eta(t)}_{\ze,\ep}\cdot c^{\la}_{\mu,\eta(t)}.
\label{eq:la23path}
\end{equation}
We show that our hypothesized coefficients satisy equation \ref{eq:la23path}. 

We can determine $d^{\ka(t)}_{\mu,\ep}$ and $d^{\eta(t)}_{\ze,\ep}$ using the Pieri rule, to get
\begin{align*}
d^{\ka(t)}_{\mu,\ep} &= [\mu_{12}+k-2t+2r;t][\fd_{333}-t+r;t][0;t][\fd_{213}+k-t+r;\nu_{23}-k][0;\nu_{23}-k]\\
&\qquad\qquad \cdot[-\mu_{13}-2r;t][-\mu_{23}-r;t][-\mu_{12}-r;\nu_{23}-k][-\nu_{23}+r;\nu_{23}-k+t], \\
d^{\eta(t)}_{\ze,\ep} &= [\nu_{12}+2t+r;\nu_{23}-t][0;\nu_{23}-t][-\nu_{13}-r;\nu_{23}-t][-\nu_{23}-r;\nu_{23}-t].
\end{align*}

We can obtain $d^{\la}_{\ka(t),\ze}$ using Proposition \ref{prop:horn1}, which gives us
\begin{align*}
d^{\la}_{\ka(t),\ze} &= [\la_{13}+2r;\fd_{333}-t][r;\fd_{333}-t][-\mu_{13}-k+2t-2r;\fd_{333}-t][-\nu_{13}-r;\fd_{333}-t].
\end{align*}

Finally, we can determine each $d^{\la}_{\mu,\eta(t)}$ by modifying the $\dlmn$ to get that:
\begin{align*}
d^{\la}_{\mu,\eta(t)} =& [\la_{13}+2r;\fd_{111}+t][r;\fd_{333}][0;\fd_{223}][-\mu_{13}-2r;\fd_{232}+t][-\mu_{23}-r;\fd_{222}-t] \\
&\qquad \cdot [-\mu_{12}-r;\fd_{223}][-\nu_{13}-t-2r;\fd_{223}][-\nu_{23}+t-r;\fd_{223}] \\
&\qquad \cdot [-\nu_{12}-2t-r;\fd_{232}+t].
\end{align*}

Therefore, using equations \ref{eq:mod1} and \ref{eq:mod2}, we get that:
\begin{align*}
\frac{d^{\la}_{\mu,\eta(t)} d^{\eta(t)}_{\ze,\ep}}{\dlmn d^{\nu}_{\ze,\ep}}
&= \frac{[\fd_{311}+2r;t][-\fd_{212}-2r;t][\fd_{221}+r;t][-\nu_{12}-2t-r;t][-k-r;t]}{[-\mu_{23}-r+k-t;t][-\fd_{331}-t-r;t][\nu_{12}+r;t][\nu_{12}+t+r;t][-t-r;t]}, \\
\frac{d^{\ka(t)}_{\mu,\ep} d^{\la}_{\ka(t),\ze}} {\dlmn d^{\nu}_{\ze,\ep}}
&= \frac{[-\mu_{13}-2r;t][\mu_{12}+k-2t+2r;t][-\mu_{23}-r;t][-k-r;t]}
{[\fd_{212}+r-t;t][-\mu_{13}-k+t-2r;t][-\fd_{331}-t-r;t][-t-r;t]} \\
&\qquad \qquad \cdot \frac{[\fd_{311}+2r;k-t][\mu_{23}-k;k]}{[-\mu_{13}-2r;k-t][\nu_{12}+r-k;k]}.
\end{align*}

Thus, using the notation of Lemma \ref{lem:row}, we have that:
\begin{align*}
d^{\la}_{\mu,\eta(t)}\cdot d^{\eta(t)}_{\ze,\ep}
&= d^{\la}_{\mu,\nu} \cdot d^{\nu}_{\ze,\ep} \cdot \frac{\phi^k_t(x;\si,\tau)}{\phi^k_0(x;\si,\tau)}, \\
d^{\ka(t)}_{\mu,\ep} \cdot d^{\la}_{\ka(t),\ze} 
&= d^{\la}_{\mu,\nu} \cdot d^{\nu}_{\ze,\ep} \cdot \frac{\phi^k_t(x;\tau,\si)}{\phi^k_0(x;\si,\tau)},
\end{align*}
where $\si=(0,\fd_{321}+r)$ and $\tau=(-\fd_{331}-r,-\fd_{311}-3r)$. 


This implies that 
\begin{align}
\sum^k_{t=0} d^{\eta(t)}_{\ze,\ep}\cdot d^{\la}_{\mu,\eta(t)} &= \frac{d^{\la}_{\mu,\nu} \cdot d^{\nu}_{\ze,\ep}}{\phi^k_0(x;\si,\tau)} \cdot {\Phi_k(x;\si,\tau)}, \\
\sum^k_{t=0} d^{\ka(t)}_{\mu,\ep} \cdot d^{\la}_{\ka(t),\ze} &= \frac{d^{\la}_{\mu,\nu} \cdot d^{\nu}_{\ze,\ep}}{\phi^k_0(x;\si,\tau)} \cdot {\Phi_k(x;\tau,\si)}.
\end{align}
Therefore, by Lemma \ref{lem:row}, we have that the hypothesized coefficients satisfy equation \ref{eq:la23path}.

Finally, we note that $c^{\la}_{\mu,\eta(t)}=d^{\la}_{\mu,\eta(t)}$ for $t>0$ using the inductive hypothesis, since $\eta_2 < \nu_2$. Therefore, we also have that $\clmn=\dlmn.$

\end{proof}

\begin{prop} If $\la_2=\la_1$, then $\clmn$ is given by 
$$\la: \begin{bmatrix}
\fd_{333} & \fd^-_{322} & 0 \\
\mathfrak{p} & \fd_{231} & \\
\fd_{221} &&
\end{bmatrix} \qquad
\mu:\begin{bmatrix}
\fd^-_{212} & \mu_{12}\\
\fd_{221} & 
\end{bmatrix} \qquad
\nu:\begin{bmatrix}
\fd^+_{231} & \fd_{221} \\
\fd_{231} & 
\end{bmatrix}$$
\label{prop:la21}
\end{prop}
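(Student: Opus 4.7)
My plan is to mimic the structure of Proposition \ref{prop:horn2}'s proof. I would induct on $\nu_{23}=\nu_2-\nu_3$, and in the inductive step decompose $\nu=\ze+\ep$ with $\ep=(1,1)$ a column of height $2$ and $\ze=(\nu_1-1,\nu_2-1,\nu_3)$, then invoke Lemma \ref{lem:path}. For the base case $\nu_2=\nu_3$, Horn inequality (11) forces $\la_3\le\mu_2+\nu_2=\mu_2+\nu_3$, hence $\mathfrak{p}=0$; the triple then lies on facet~$3$, and $\clmn$ is recovered from Proposition \ref{prop:horn1} (applied with the roles of $\mu$ and $\nu$ interchanged). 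Matching the resulting division numbers to the hypothesized formula uses the remark at the end of the proof of Proposition \ref{prop:horn1} that a block of lower hooks in a strip of $\la$ can be swapped for a matching block of upper hooks in a strip of $\mu$ or $\nu$.

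For the inductive step, Lemma \ref{lem:path} yields
\[
\sum_{i=1}^{3} c^{\la}_{\mu,\eta(i)}\,c^{\eta(i)}_{\ze,\ep} \;=\; \sum_{i=1}^{3} c^{\ka(i)}_{\mu,\ze}\,c^{\la}_{\ka(i),\ep},
\]
where $\eta(i)=\ze+(1,1,1)-\mathbf{e}_i$, $\ka(i)=\la-(1,1,1)+\mathbf{e}_i$, and $\eta(3)=\nu$. Since $\la_2=\la_1$, the tuple $\ka(2)=(\la_1-1,\la_1,\la_3-1)$ fails to be a partition, so $c^{\la}_{\ka(2),\ep}=0$ and that summand drops. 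The column Pieri rule evaluates $c^{\la}_{\ka(i),\ep}$ and $c^{\eta(i)}_{\ze,\ep}$ directly. Because $\la$ is unchanged---so the condition $\la_2=\la_1$ persists---and $\eta(i)_{23}<\nu_{23}$ for $i=1,2$, the inductive hypothesis gives $c^{\la}_{\mu,\eta(i)}=d^{\la}_{\mu,\eta(i)}$. On the $\ka$-side, $\ka(3)=(\la_1-1,\la_1-1,\la_3)$ still satisfies $\ka(3)_1=\ka(3)_2$, so the inductive hypothesis covers $c^{\ka(3)}_{\mu,\ze}$; the remaining coefficient $c^{\ka(1)}_{\mu,\ze}$, where $\ka(1)=(\la_1,\la_1-1,\la_3-1)$ abandons the $\la_2=\la_1$ condition, is obtained by identifying which facet $(\ka(1),\mu,\ze)$ now occupies and invoking the corresponding previously established proposition---typically Proposition \ref{prop:la3} when $\mathfrak{p}=0$ and one of Propositions \ref{prop:nu1}, \ref{prop:nu3} when $\mathfrak{p}>0$. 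Once every summand is rewritten as $\dlmn\cdot d^{\nu}_{\ze,\ep}$ times an explicit factor via identities \eqref{eq:mod1} and \eqref{eq:mod2}, the identity to verify takes the shape $\Phi(x;\si,\tau)=\Phi(x;\tau,\si)$ of Lemma \ref{lem:col} for a suitable choice of $\si$ and $\tau$, closing the induction.

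The main obstacle is the parameter $\mathfrak{p}=\max(\la_3-\mu_2-\nu_3,0)$, which enters as shifted anchors in the $\om^{\xi}_1$ blocks of all three partitions and splits the analysis into the cases $\mathfrak{p}=0$ and $\mathfrak{p}>0$. Tracking how the $\mathfrak{p}$-shifts evolve when $\la$ is replaced by $\ka(1)$ or $\nu$ by $\eta(1)$---where the overlap $o_2$ structure between the second and third rows of the skew diagram may or may not persist for the new triple---and reconciling the resulting anchor perturbations so that the residual factors assemble cleanly into the rigid two-column shape of Lemma \ref{lem:col} is the principal bookkeeping difficulty. A secondary subtlety is correctly identifying which earlier proposition computes $c^{\ka(1)}_{\mu,\ze}$ in each sub-case of $\mathfrak{p}$, and absorbing the mismatch in the number of nonzero summands (two on the right, three on the left) that the invalidity of $\ka(2)$ creates into the $\phi_j$ bookkeeping underlying Lemma \ref{lem:col}.
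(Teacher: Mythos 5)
Your proposal follows the template of Proposition \ref{prop:horn2}, but the inductive step has a genuine gap at the coefficient $c^{\ka(1)}_{\mu,\ze}$. In Proposition \ref{prop:horn2} the column decomposition works because the facet condition $\mu_1=\mu_2$ lives on $\mu$, which is untouched by passing to $\ka(i)$ and $\ze$; here the condition $\la_1=\la_2$ lives on $\la$, and $\ka(1)=\la-(0,1,1)$ destroys it. You assert that $(\ka(1),\mu,\ze)$ then "occupies" some other facet handled by an earlier proposition, but in general it occupies no facet at all and is not even a minimal triple. Concretely, take $\la=(6,6,3)$, $\mu=(5,3,1)$, $\nu=(4,2)$: this is a minimal triple lying only on facet $6$ (every other Horn inequality is strict). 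With your $\ep=(1,1)$, $\ze=(3,1)$ one gets $\ka(1)=(6,5,2)$, and the triple $((6,5,2),(5,3,1),(3,1))$ satisfies all Horn inequalities strictly, with $c^{\ka(1)}_{\mu,\ze}(1)=2$ (two LR fillings). So no division-number formula exists for this summand --- for non-minimal triples the coefficient need not even factor into linear pieces --- and the reduction of the path identity to Lemma \ref{lem:col}, which requires writing every summand as $\dlmn\, d^{\nu}_{\ze,\ep}$ times an explicit $\phi$-factor, cannot be set up. A secondary weak point is the two-versus-three summand mismatch created by $\ka(2)$ failing to be a partition: Lemma \ref{lem:col} compares sums over the same index set, and you would need the corresponding $\phi_j(x;\tau,\si)$ to vanish identically, which is not automatic and is not addressed.

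This obstruction is exactly why the paper proves this case differently: it peels off a single \emph{row} $\ep=(\nu_2-\nu_3)$ with $\ze=(\nu_1,\nu_3,\nu_3)$, so that every intermediate triple $(\ka(t),\mu,\ze)$ with $\ka(t)=\la-(0,n-t,t)$ automatically has $\ze_2=\ze_3$ and is therefore computable from Proposition \ref{prop:horn1} regardless of what happens to $\la$, while the triples $(\la,\mu,\eta(t))$ retain $\la_1=\la_2$ and are handled by induction on $n=\mu_1+\nu_2-\la_1-\mathfrak{p}$; the resulting $(n+1)$-term identity is then an instance of Lemma \ref{lem:row}, not Lemma \ref{lem:col}. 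Your base case ($\nu_2=\nu_3$ forces $\mathfrak{p}=0$ and reduces to Proposition \ref{prop:horn1} with $\mu$ and $\nu$ interchanged) is fine, but to repair the inductive step you would essentially have to switch to the paper's row decomposition.
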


\begin{proof} Let $\dlmn$ be the expression given by the proposition. We use induction on $n=\mu_1+\nu_2-\la_1-s$ to show that $\clmn=\dlmn$. If $n=0$, then $\mu_1+\nu_2=\la_1-\mathfrak{p}=\la_2$. If $\mathfrak{p}=0$, this case reduces to that of Prop \ref{prop:nu1} (with the role of $\mu$ and $\nu$ switched). If $\mathfrak{p}=\la_3-\mu_2-\nu_3$, then $p=\la_1-\nu_1-\mu_3$, and so the condition $n=0$ implies that $\nu_1+\mu_3=\la_1$, which reduces this case to that of Prop \ref{prop:nu3} (once again with the role of $\mu$ and $\nu$ switched).

For $n>0$, let 
\begin{align*}
\ep&=(\nu_2-\nu_3),\\
\ze&=(\nu_1,\nu_3,\nu_3), 
\end{align*}
and 
\begin{align*}
\eta(t)&= \nu-(-t,t), \\
\ka(t)&=\la-(0,n-t,t).
\end{align*}
Note that $\eta(0)=\nu.$ 

By Lemma \ref{lem:path}, we have that 
$$\sum^n_{t=0} c^{\ka(t)}_{\mu,\ep} \cdot c^{\la}_{\ka(t),\ze} = \sum^n_{t=0} c^{\eta(t)}_{\ze,\ep}\cdot c^{\la}_{\mu,\eta(t)}.$$
We show that our hypothesized coefficients satisfy this equation.

We can determine $d^{\ka(t)}_{\mu,\ep}$ and $d^{\eta(t)}_{\ze,\ep}$ using the Pieri rule, $d^{\la}_{\ka(t),\ze}$ using Proposition \ref{prop:horn1}, and $d^{\la}_{\mu,\eta(t)}$ by the proposed formula for $\dlmn$. Using analogous calculations to those in the proof of Proposition \ref{prop:la23} and using the notation of Lemma \ref{lem:row}, one can check that:
\begin{align*}
d^{\la}_{\mu,\eta(t)}\cdot d^{\eta(t)}_{\ze,\ep}
&= d^{\la}_{\mu,\nu} \cdot d^{\nu}_{\ze,\ep} \cdot \frac{\phi^n_t(x;\si,\tau)}{\phi^n_0(x;\si,\tau)}, \\
d^{\ka(t)}_{\mu,\ep} \cdot d^{\la}_{\ka(t),\ze} 
&= d^{\la}_{\mu,\nu} \cdot d^{\nu}_{\ze,\ep} \cdot \frac{\phi^n_t(x;\tau,\si)}{\phi^n_0(x;\si,\tau)},
\end{align*}
where $\si=(2\mathfrak{p}+\fd_{323}+r,\mu_{13}-n+2r)$ and $\tau=(-\mathfrak{p}-\mu_{23}-r,0)$.

The result follows from Lemma \ref{lem:row}.
\end{proof}

\begin{prop} If $\la_1=\mu_2+\nu_2$, then $\clmn$ is given by
$$\la: \begin{bmatrix}
\fd_{111} & \fd^-_{223} & 0 \\
\fd^+_{222} & \fd_{113} & \\
\fd^+_{121} &&
\end{bmatrix} \qquad
\mu: \begin{bmatrix}
\fd^-_{112} & 0\\
\fd_{221} & 
\end{bmatrix} \qquad 
\nu: \begin{bmatrix}
\fd^+_{213} & \fd_{121} \\
\la^{+}_{23} & 
\end{bmatrix}$$
\label{prop:la1}
\end{prop}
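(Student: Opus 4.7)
The plan is to follow the template established in Propositions \ref{prop:la23} and \ref{prop:la21}: induct on a suitable integer parameter $n \geq 0$ measuring the ``depth'' of the triple within the facet $\la_1 = \mu_2 + \nu_2$, and use Lemma \ref{lem:row} together with a minimal path obtained from Lemma \ref{lem:path}. Let $\dlmn$ denote the expression given by the hypothesized division numbers.

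For the base case, I would pick $n$ so that $n=0$ forces an additional Horn equality reducing the triple to a previously-handled case. A natural choice is $n = \mu_{12}$ or $n = \nu_{12}$, so that $n=0$ gives $\mu_2 = \mu_1$ or $\nu_2 = \nu_1$, at which point the formula can be checked directly against Proposition \ref{prop:horn2} (with the roles of $\mu$ and $\nu$ exchanged in the second case). One has to verify separately the subcases $\mathfrak{p} = 0$ and $\mathfrak{p} = \la_3 - \mu_2 - \nu_3 > 0$, exchanging the relevant standard/flipped hooks in the boundary strips so the formulas match.

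For the inductive step, I would decompose $\nu = \ze + \ep$ with $\ep = (\nu_2 - \nu_3)$ a single row and $\ze = (\nu_1, \nu_3, \nu_3)$, so that every coefficient $c^{\ka}_{\mu,\ze}$ that arises is computable by Proposition \ref{prop:horn1} (since $\ze_2 = \ze_3$). Applying Lemma \ref{lem:path} gives
\[
\sum_{t=0}^{n} c^{\ka(t)}_{\mu,\ep} \cdot c^{\la}_{\ka(t),\ze} = \sum_{t=0}^{n} c^{\eta(t)}_{\ze,\ep} \cdot c^{\la}_{\mu,\eta(t)},
\]
where $\eta(t) = \nu - (-t, t)$ and the $\ka(t)$ are chosen so that all triples appearing are minimal; the $t=0$ term on the right-hand side is the only one involving $\clmn$, and all other $c^{\la}_{\mu,\eta(t)}$ are handled by the inductive hypothesis.

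To prove the analogous identity for $\dlmn$, I would compute the four types of factors: $d^{\ka(t)}_{\mu,\ep}$ and $d^{\eta(t)}_{\ze,\ep}$ via the Pieri rule, $d^{\la}_{\ka(t),\ze}$ via Proposition \ref{prop:horn1}, and $d^{\la}_{\mu,\eta(t)}$ from the hypothesized formula by shifting anchors and division numbers. Using equations \ref{eq:mod1} and \ref{eq:mod2} to collapse the shifts, I expect each summand to take the form
\[
d^{\la}_{\mu,\eta(t)} \cdot d^{\eta(t)}_{\ze,\ep} = \dlmn \cdot d^{\nu}_{\ze,\ep} \cdot \frac{\phi^n_t(x;\si,\tau)}{\phi^n_0(x;\si,\tau)}, \qquad
d^{\ka(t)}_{\mu,\ep} \cdot d^{\la}_{\ka(t),\ze} = \dlmn \cdot d^{\nu}_{\ze,\ep} \cdot \frac{\phi^n_t(x;\tau,\si)}{\phi^n_0(x;\si,\tau)},
\]
for vectors $\si, \tau \in \mathbb{Z}[r]^2$ read off from the anchors of the strips that are actually changing as $t$ varies. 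Lemma \ref{lem:row} then delivers the identity.

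The main obstacle is bookkeeping: identifying the right $n$ so that the base case cleanly reduces to a prior proposition, and extracting the correct pair $(\si,\tau)$ from the shifted anchors so that the ratios above match the $\phi^n_t(x;\si,\tau)$ of Lemma \ref{lem:row}. The presence of both the shift $\mathfrak{p}$ and the decorated quantities $\fd^{\pm}_{ijk}$ in the division numbers makes this more delicate than in Propositions \ref{prop:la23} and \ref{prop:la21}, since one must track how the equality $\la_1 = \mu_2 + \nu_2$ (combined with the sign of $\la_3 - \mu_2 - \nu_3$) interacts with each modified anchor; getting the correct cancellations is where the real work lies.
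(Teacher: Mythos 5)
Your overall template (a minimal path from Lemma \ref{lem:path} fed into Lemma \ref{lem:row}) is the right one, but the specific decomposition you chose is precisely where this facet differs from Propositions \ref{prop:la23} and \ref{prop:la21}, and as proposed it does not close. With $\ep=(\nu_2-\nu_3)$ and $\ze=(\nu_1,\nu_3,\nu_3)$, the Pieri expansion of $P_\ze P_\ep$ produces $\eta(t)=(\nu_1+t,\nu_2-t,\nu_3)$, so for $t>0$ the triples $(\la,\mu,\eta(t))$ have $\la_1>\mu_2+\eta(t)_2$ and therefore leave the facet $\la_1=\mu_2+\nu_2$. They are not instances of this proposition, so no inductive hypothesis about it applies to $c^{\la}_{\mu,\eta(t)}$, and nothing guarantees these triples are minimal or covered by an earlier case. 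Your candidate induction parameters fail for the same reason: $\mu_{12}$ is unchanged when only $\nu$ is modified, and $\eta(t)_1-\eta(t)_2=\nu_{12}+2t$ actually increases, so the recursion never reaches smaller instances. In Propositions \ref{prop:la23} and \ref{prop:la21} this issue is invisible because the facet condition there involves only $\la$, which the path leaves untouched; here the facet condition involves $\nu_2$. This is exactly why the paper switches to $\ep=\om^\nu_1$ and $\ze=\om^\nu_3+\om^\nu_2=(\nu_2,\nu_2,\nu_3)$: any $\eta$ with $c^{\eta}_{\ze,\ep}\neq 0$ satisfies $\ze_2\leq\eta_2\leq\ze_1$, forcing $\eta_2=\nu_2$, so every intermediate triple $(\la,\mu,\eta)$ stays on the facet. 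The companion coefficients $d^{\ka(t)}_{\mu,\ze}$ are then supplied by Proposition \ref{prop:horn2} (not \ref{prop:horn1}), with $d^{\la}_{\ka(t),\ep}$ and $d^{\eta(t)}_{\ze,\ep}$ from the Pieri rule, and Lemma \ref{lem:row} is applied with $\si=(0,n-\mu_{13}-2r)$ and $\tau=(\fd_{333}-\mathfrak{p}+r,\fd_{233}+2r)$.

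The base case also differs from what you describe. The paper inducts on $n=\mu_1+\nu_2-\la_1-\mathfrak{p}$, which on this facet equals $\mu_{12}-\mathfrak{p}$, and $n=0$ genuinely splits: if $\mathfrak{p}=0$ it gives $\mu_1=\mu_2$ and reduces to Proposition \ref{prop:horn2}, but if $\mathfrak{p}=\la_3-\mu_2-\nu_3>0$ it gives $\la_2=\mu_3+\nu_1$, hence $\la_3=\mu_1+\nu_3$, and reduces to Proposition \ref{prop:nu1} with the roles of $\mu$ and $\nu$ switched --- not to \ref{prop:horn2}. So to repair your argument you need both to change the decomposition in the inductive step so that the facet condition is preserved along the path (and the parameter actually decreases), and to treat the $\mathfrak{p}>0$ boundary case by a different previously established proposition.
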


\begin{proof} Let $\dlmn$ be the expression given by the proposition. We use induction on $n=\mu_1+\nu_2-\la_1-\mathfrak{p}$ to show that $\clmn=\dlmn$. Suppose $n=0$. Then either $\mu_1=\mu_2$ (if $\mathfrak{p}=0$), so that this case reduces to that of Prop. \ref{prop:horn2}, or $\la_2=\mu_3+\nu_1$ (if $\mathfrak{p}= \la_3-\mu_2-\nu_3$), which implies that $\la_3=\mu_1+\nu_3$, and so this case reduces to that of Prop \ref{prop:nu1} with the roles of $\mu$ and $\nu$ switched.

For $n>0$, let
\begin{align*}
\ep&=\om^\nu_1, \\
\ze&=\om^\nu_3+\om^\nu_2, 
\end{align*}
and
\begin{align*}
\eta(t)&= \nu-(-t,t), \\
\ka(t)&=\la-(0,n-t,n), 
\end{align*}
Note that $\eta(0)=\nu.$ 

By Lemma \ref{lem:path}, we have that 
$$\sum^n_{t=0} c^{\ka(t)}_{\mu,\ze} \cdot c^{\la}_{\ka(t),\ep} = \sum^n_{t=0} c^{\eta(t)}_{\ze,\ep}\cdot c^{\la}_{\mu,\eta(t)}.$$
We show that our hypothesized coefficients satisfy this equation.

We can determine  $d^{\la}_{\ka(t),\ep}$ and $d^{\eta(t)}_{\ze,\ep}$ using the Pieri rule, $d^{\ka(t)}_{\mu,\ze}$ using Proposition \ref{prop:horn2}, and $d^{\la}_{\mu,\eta(t)}$ for $t>0$ using the inductive hypothesis. Note that by the Pieri rule, $d^\nu_{\ze,\ep}=d^{\eta(t)}_{\ze,\ep}=1$. 

Thus, once again by an argument similar to that in the proof of Proposition \ref{prop:la23}, we have that:
\begin{align*}
d^{\la}_{\mu,\eta(t)}\cdot d^{\eta(t)}_{\ze,\ep}
&= d^{\la}_{\mu,\nu} \cdot \frac{\phi^n_t(x;\si,\tau)}{\phi^n_0(x;\si,\tau)} \mbox{ if }t>0, \\
d^{\ka(t)}_{\mu,\ze} \cdot d^{\la}_{\ka(t),\ep}
&= d^{\la}_{\mu,\nu} \cdot \frac{\phi^n_t(x;\tau,\si)}{\phi^n_0(x;\si,\tau)},
\end{align*}
where $\si=(0,n-\mu_{13}-2r)$ and $\tau=(\fd_{333}-\mathfrak{p}+r,\fd_{233}+2r)$.

The result follows from Lemma \ref{lem:row}.
\end{proof}

This completes our proof of Theorem \ref{thm:main}.

\section{Macdonald Polynomials} \label{sec:macd}

Jack polynomials $P_\la(\al;x)$ are generalized by Macdonald polynomials $P_\la(q,t;x)$, where

$$\lim_{t \rightarrow 1} P_\la(t^\al,t;x) = P_\la(\al;x).$$

Stanley's conjecture can be extended to Macdonald polynomials in a very straightforward way. Just as we had defined $\al$-generalizations of hook-length earlier, we can also define two $(q,t)$-generalizations:
\begin{itemize}
\item \emph{upper hook-length}: $h^*_\la(b) = 1-q^{a(b) +1}t^{\ell(b)}$
\item \emph{lower hook-length}: $h_*^\la(b) = 1-q^{a(b)}t^{\ell(b)+1}$
\end{itemize}

In fact, using these hook lengths, we can apply the Pieri rule (as stated in Theorem \ref{thm:jpieri}) to Macdonald polynomials as well (see \cite[IV.6.24]{M}). We can also get the appropriate analogue of Equation \ref{eq:transpose} by defining 
$$b_\la(q,t) = \frac{H_*^\la(q,t)}{H^*_\la(q,t)},$$
which gives us
$$\displaystyle c^{\la'}_{\mu',\nu'}\left(t,q\right) = \frac{\clmn(q,t) b_\mu(q,t) b_\nu(q,t)}{b_\la(q,t)}.$$

Finally, we use the definitions and theorem above to get the following extension of Theorem \ref{thm:main}.

\begin{thm} For a minimal triple $(\lmn)$ of partitions in $\Pn_3$, $\clmn(q,t)$ can be expressed by the same assignment of upper and lower hooks as $\clmn(\al)$, using the corresponding $(q,t)$-hooks instead of $\al$-hooks.
\label{thm:macmain}
\end{thm}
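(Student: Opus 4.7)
The plan is to mirror the proof of Theorem \ref{thm:main} step by step in the Macdonald setting, with every additive identity in $\al$ replaced by its multiplicative $(q,t)$-analogue. The paper has already noted that the Pieri rule (Theorem \ref{thm:jpieri}) and the transposition identity (Equation \ref{eq:transpose}) both lift to Macdonald polynomials, so the main external inputs are already in place; associativity (Lemma \ref{lem:path}) is parameter-free, Lemma \ref{lem:3to2} uses only the lifted Pieri rule, and the classification of minimal triples in $\Pn_3$ in Section \ref{sec:class} depends only on the combinatorics of the partitions. Consequently the division number tables in Table \ref{tab:div} specify the only candidate formula, and the whole task reduces to reverifying the propositions of Section \ref{sec:ver} with $(q,t)$-hooks in place of $\al$-hooks.

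First I would introduce $(q,t)$-analogues of the rational-function notation used in the proof of Theorem \ref{thm:main}. For a box with arm $a$ and leg $\ell$, write $u = q^a t^\ell$; the flipped-to-standard hook ratio becomes $\frac{1 - tu}{1 - qu}$ for boxes of $\la$, and its reciprocal for boxes of $\mu$ and $\nu$, reducing to the Jack ratio under $t \to 1,\ q \to t^\al$. The symbols $\phi$, $\ab{a}_j$ and $[b;n]$ are products of such ratios along a strip and admit direct $(q,t)$-lifts in which consecutive arm-neighbours contribute a factor of $q$ and consecutive leg-neighbours a factor of $t$. Equations \ref{eq:phid1} and \ref{eq:phid2} then become multiplicative telescoping and anchor-cancellation identities that hold factor-by-factor, and Equations \ref{eq:mod1} and \ref{eq:mod2} lift unchanged in shape. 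I would then re-prove Lemmas \ref{lem:col} and \ref{lem:row} in this $(q,t)$ setting: both original proofs clear denominators to get polynomial identities in an auxiliary variable $x$ and then exhibit enough vanishing points, and the multiplicative translation replaces $x$ by a monomial $u$, replaces vanishing at sums $x = \si_k + \tau_l$ by vanishing at multiplicative combinations $u = \si_k \tau_l$, and the inductive bookkeeping is identical. With these $(q,t)$-lemmas in hand, each of Propositions \ref{prop:horn1}--\ref{prop:la1} is then verified by the same minimal-path induction, with no combinatorial rechoice required.

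The main obstacle is carrying out the $(q,t)$-analogue of Lemma \ref{lem:row} cleanly: its Jack proof is already the most intricate computation in the paper, turning on a long cancellation (the factor $c_k$) that is transparent additively but becomes notationally denser multiplicatively. A tempting shortcut is to avoid this step entirely and instead argue that both sides of the claimed coefficient identity are rational functions in $(q,t)$ agreeing on the codimension-one family $q = t^\al$ as $t \to 1$ by Theorem \ref{thm:main}, and then promote the agreement to all $(q,t)$ by a density or bounded-degree argument. However, controlling the degree of both sides in $(q,t)$ is not automatic, so the direct translation outlined above seems to be the cleanest route.
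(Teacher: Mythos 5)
Your plan is correct in outline and has the same overall architecture as the paper's proof: keep the classification and the division-number formulas of Table \ref{tab:div}, use the Macdonald Pieri rule and the $(q,t)$-analogue of equation \ref{eq:transpose}, and re-run the minimal-path verifications of Section \ref{sec:ver} with the role of $\al-1$ played by $t-q$. Where you diverge is at the technical heart. The paper does not re-prove anything: it divides each $(q,t)$-hook by $q^{a(b)}t^{\ell(b)}$, observes that the flipped-to-standard ratio is then literally $\phi\bigl(t-q;\{\pm\hat{h}\}\bigr)$, and invokes Lemmas \ref{lem:col} and \ref{lem:row} exactly as stated (they are identities in arbitrary $x,\si,\tau$) with $x=t-q$ and with $\si,\tau$ built from the modified hooks. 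You instead propose to build a multiplicative calculus (geometric analogues of $\ab{a}_j$ and $[b;n]$, and of equations \ref{eq:phid1}--\ref{eq:mod2}) and to re-prove $(q,t)$-versions of both lemmas, with vanishing at multiplicative combinations instead of sums. That is heavier than the paper's route, but it is not misdirected effort: the verbatim reuse is unproblematic for Lemma \ref{lem:col}, which places no structure on $\si,\tau$, whereas Lemma \ref{lem:row} has the integer shifts $\si_i+k$ and the ranges $[n-t]$ built into its statement, and along a strip the normalized $(q,t)$-hooks move by factors of $q$ rather than by $+1$; so matching the Macdonald summands to that lemma (or to a multiplicative restatement of it, as you propose) is exactly the bookkeeping that the paper compresses into ``the same lemmas as before with the same modification to $x$ and the hooks in $\si$ and $\tau$,'' and your plan confronts it head on, including the $c_k$ cancellation you single out. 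Your decision to reject the shortcut via the specialization $q=t^{\al}$, $t\to 1$ is also sound: agreement along a one-parameter limit does not by itself force equality of two-parameter rational functions without degree control, which is precisely what one would have to supply.
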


\begin{proof} We can still use the same classification as before, as well as the subsequent division number notation to give an assignment of upper and lower hooks. We can also once again express the ratio of a flipped to a standard $(q,t)$-hook in terms of $\phi$, in the following way. Given a $(q,t)$-hook $h_\la(b)$ (which could be either upper or lower), we define 
$$\hat{h}_\la(b)=\frac{h_\la(b)}{q^{a(b)}t^{\ell(b)}}.$$ 
Then 
\begin{align*}
\phi\left(t-q;\set{\hat{h}^*_\la(b)}\right) &= \frac{\hat{h}^*_\la(b)-(t-q)}{\hat{h}^*_\la(b)} \\ 
&= \frac{h^*_\la(b)-(t-q)(q^{a(b)}t^{\ell(b)})}{h^*_\la(b)} \\
&= \frac{h^\la_*(b)}{h^*_\la(b)},
\end{align*}
and 
\begin{align*}
\phi\left(t-q;\set{-\hat{h}^\la_*(b)}\right) &= \frac{-\hat{h}^\la_*(b)-(t-q)}{-\hat{h}^\la_*(b)} \\ 
&= \frac{h^\la_*(b)-(q-t)(q^{a(b)}t^{\ell(b)})}{h^\la_*(b)} \\
&= \frac{h^*_\la(b)}{h^\la_*(b)}.
\end{align*}
We can thus express our coefficients in terms of the same $\phi$ functions, but this time using the modified hook $\hat{h}(q,t)$ instead of the corresponding hook $h(\al)$, and using $x=t-q$ instead of $x=\al-1$. We can also prove that these expressions give the correct coefficient using the same lemmas as before with the same modification to $x$ and the hooks in $\si$ and $\tau$.

\end{proof}

To see that $\lim_{t\rightarrow 1} \clmn(t^\al,t) = \clmn(\al)$, we note that
\begin{align*}
\lim_{t\rightarrow 1} \frac{1-t^{\al(a(b))}t^{\ell(b)+1}}{1-t^{\al(a(b) +1)}t^{\ell(b)}} = \frac{\al(a(b))+\ell(b)+1}{\al(a(b) +1)+\ell(b)}, 
\end{align*}
as desired.

\section{Further Directions} \label{sec:future}
\subsection{Minimal triples in $\Pn_n$ with $n > 3$} 
The algebraic identities we obtain can be used in higher dimensions, but they relate to a system of minimal paths. As $n$ gets larger, the classification problem becomes much more complicated, both for unique LR fillings and for faces of Horn cones that correspond to minimal triples. Given the role played by the codimension one faces of Horn cones when $n=3$, one might wonder if Stanley's conjecture can be extended from minimal triples to all boundary triples on this cone. However, this is not true, as demonstrated by the example below. 

\begin{expl} Let $\la=(5,3,2,1),\mu=(3,2,1),\nu=(2,2,1)$. This lies on the codimension one face given by $\la_1=\mu_1+\nu_1$. However, in this case $\clmn(1)=2$ and 
$$\glmn(\al)=48\al^6(1+3\al)(3+5\al)(3+\al)(1+2\al)^2(3+2\al)(2+\al)^2(2\al^2+11\al+2).$$ 
\end{expl}

We also note that when $n=3$, our minimal paths typically involved decomposing $\mu$ or $\nu$ into rectangular blocks, since all triples involving a rectangle are minimal in this case. However, when $n>3$, it is possible to have a non-minimal triple even if $\nu$ is a rectangular partition. 

\begin{expl} Let $\la=(4,3,2,1),\mu=(3,2,1),\nu=(2,2)$. Then $\clmn(1)=2.$ 
\end{expl}

We therefore need a more efficient technique for determining minimal triples and for finding ways to expand them as minimal paths.

\subsection{Non-minimal triples in $\Pn_3$}
In general, if $\clmn(1) = k>1$, we cannot write it as the sum of $k$ different hook assignments for $\la,\mu,\nu$, each multiplied by a power of $\al$. Stanley and Hanlon demonstrated this in \cite{S} with the following example.

\begin{expl} $\la=(4,2,1),\mu=(3,1),\nu=(2,1)$
$$\glmn(\al) = 8\al^5(9 + 97\al + 294\al^2 + 321\al^3 + 131\al^4 + 12\al^5)$$
One can verify that any two experessions $f_1(\al)$ and $f_2(\al)$ given by hook assignments must share a common linear factor not equal to $\al$ or a common integer factor not equal to $8$. However, the above expression for $\glmn$ has no rational zeros besides $0$, and no integer factors besides $8$. 
\end{expl}

However, if we can expand the coefficients for non-minimal triples as a minimal path,
we might obtain a way to write coefficient as the sum of $k$ positive terms, each of which factors into linear factors in $\al$, given by $k$ terms of the form found in Lemmas \ref{lem:col} or \ref{lem:row}. In particular, we could write:
$$\clmn\cdot c^{\nu}_{\nu',\nu''} + \sum^n_{t=k+1} c^{\la}_{\mu,\eta(t)} \cdot c^{\eta(t)}_{\nu',\nu''}= \sum^n_{t=1} c^{\ka(t)}_{\mu,\nu'} \cdot c^{\la}_{\ka(t),\nu''},$$
and then write $\clmn$ in this form as long as the remaining coefficients are sufficient to find appropriate choices for $\si$ and $\tau$. This would give a combinatorial description of such coefficients, and also show that they are positive expressions in $\al$, as predicted by another conjecture of Stanley \cite[Conj. 8.3]{S}.

\section*{Acknowledgements}
I would like to thank my advisor Siddhartha Sahi for his patient and careful supervision of my dissertation research from which this work has been derived. I am also thankful to Anders Buch for many helpful discussions and suggestions for improvement.

\bibliography{jp}
\bibliographystyle{abbrv}    

\end{document}